\newcommand{\R}{\mathbb{R}}
\newcommand{\Z}{\mathbb{Z}}
\newcommand{\Sd}{\textup{Sd}}
\newcommand{\Lk}{\textup{Lk}}
\DeclareMathAlphabet{\mathcalligra}{T1}{calligra}{m}{n}
\newtheorem{theorem}{Theorem}[section]
\newtheorem{lemma}[theorem]{Lemma}
\newtheorem{proposition}[theorem]{Proposition}
\newtheorem{corollary}[theorem]{Corollary}
\newtheorem{remark}[theorem]{Remark}
\newtheorem{definition}[theorem]{Definition}
\begin{document}

\title{Morse shellings and compatible discrete Morse functions}

\author{Nerm{\accent95\i}n Salepc{\accent95\i} and Jean-Yves Welschinger}
\maketitle

\begin{abstract}
We introduce  a notion of Morse shellings (and tilings) on finite simplicial complexes which extends the classical one and  its relation to discrete  Morse theory.
Skeletons and barycentric subdivisions of Morse shellable (or tileable) simplicial complexes are Morse shellable (or tileable). Moreover, every triangulated closed surface is Morse shellable while every closed three-manifold carries Morse shellable triangulations. Finally, any shelling encodes a class of discrete Morse functions whose critical points are in one-to-one correspondence, preserving the index, with the critical tiles of the shelling. 

{Keywords :  simplicial complex, shellable complex, discrete Morse theory,  tilings, barycentric subdivision, triangulation.}

\textsc{Mathematics subject classification 2020: }{05E45, 57Q70, 57Q15, 55U10, 52C22.}
\end{abstract}

\section{Introduction}

We recently \cite{SW3} introduced a notion of tilings of finite simplicial complexes which are partitions of their geometric realizations by tiles. A tile is a maximal simplex deprived of several facets, that is of several codimension one faces. In each dimension $n$, there are thus $n+2$ different tiles up to affine isomorphism, denoted by $T_0^n,\ldots, T^n_{n+1}$ depending on the number of facets that have been removed, and the closed simplex itself is one of them, namely $T^n_0$, while the open simplex is another one, namely $T^n_{n+1}$. Not all simplicial complexes are tileable, but skeletons and barycentric subdivisions of tileable ones  are tileable by Theorem~1.9 of \cite{SW3}. The boundary of any convex simplicial polytope is tileable, even shellable, by \cite{BruMan}, and the product of any sphere with a torus of positive dimension carries tileable triangulations which cannot be shelled by \cite{Welsch}, see \S \ref{SSect_Shelling}.
These tilings provide a geometric way to understand the $h$-vectors of finite tileable simplicial complexes, see \cite{Ful, Stan, Z} for a definition. Namely, if $h^n_k$ denotes the number of tiles $T^n_k$ needed to tile a complex  $K$, then $(h^n_0,\ldots, h^n_{n+1})$ coincides with the $h$-vector of $K$ provided  that $h^n_0=1$ and in general, two tilings of $K$ have the same $h$-vector  $(h^n_0,\ldots, h^n_{n+1})$ provided they have the same number of tiles $T^n_0$, see Theorem~1.8 of \cite{SW3}.
These tilings also appeared to be useful to produce packings by disjoint simplices of the successive barycentric subdivisions $\Sd^d(K), d> 0$, see \S~5 of \cite{SW3}. They actually moreover seemed to be closely related to the discrete Morse theory of Robin Forman~\cite{F1} even though this aspect has not been investigated in \cite{SW3}. The tiles $T^n_0$ behaved as critical points of index zero, the tiles $T^n_{n+1}$ as critical points of index $n$ of a Morse function  and the other ones as regular points. No analog though of critical points of intermediate indices. We now fill this gap. 

We define a Morse tile to be a closed simplex deprived of several facets together with a unique face of possibly higher codimension. It is critical if and only if this codimension is maximal, see Definition \ref{Defn_Morse}. A Morse tiling of a finite simplicial complex $K$ is a partition by Morse tiles such that for every $j\geq 0$, the union of tiles of dimension greater than $j$ is a simplicial subcomplex, see Definition \ref{Defn_Morse tiling}. 
The previous notion of tiling, due to its relation with $h$-vectors, is now called $h$-tiling and slightly generalized to allow for  tiles of various dimensions, see Definition \ref{Defn_$h$tiling}. 
A Morse tiling on $K$ is moreover said to be shellable iff $K$ admits a filtration $\emptyset=K_0 \subset K_1\subset\ldots\subset K_N=K$ by simplicial subcomplexes such that for every $i\in \{1,\ldots, N\},$ $K_i\setminus K_{i-1}$ consists of a single Morse tile, see Definition \ref{Morseshellable}. Replacing Morse tiles by basic tiles in this definition, we recover the classical notion of shellability, see Lemma \ref{Lemmashel}, without some non-emptiness assumption though, see Remark \ref{Rem_rem0} and \cite{Koz, Z} for instance. These definitions actually extend to a larger class of sets, the Morse tileable or shellable sets, see subsections \ref{Ssect_MorseTiling} and \ref{SSect_Shelling}.
We prove the following tiling theorem, see Corollary \ref{Cor_SkeletonM}, Corollary \ref{Cor_BarSubM} and Theorem \ref{Thm_SkelBarSubShell}. 
\begin{theorem}\label{Thm_intro0}
Skeletons and barycentric subdivisions of Morse tileable (resp. shellable) sets are Morse tileable (resp. shellable). Moreover, every Morse tiling on such a set induces Morse tilings on its barycentric subdivisions containing the same number of critical tiles with the same indices. 
\end{theorem}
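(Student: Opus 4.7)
The plan is to reduce all three statements to a local analysis on a single Morse tile, then glue the local constructions back together. Given a Morse tiling $\tau_1,\ldots,\tau_N$ of $K$, both the $j$-skeleton $\textup{Sk}_j(K)$ and the barycentric subdivision $\Sd(K)$ decompose set-theoretically as disjoint unions of the pieces $\textup{Sk}_j(K)\cap \tau_i$ and $\Sd(\tau_i)$, respectively. It therefore suffices to (i) exhibit an explicit Morse tiling of each of these local pieces, and (ii) verify that the resulting global partitions satisfy the dimension-filtration condition of Definition \ref{Defn_Morse tiling}.

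For the skeleton statement, I would proceed case by case through the list of Morse tiles in dimension $n$. A basic tile $T^n_k$ intersected with the $j$-skeleton (for $j<n$) decomposes naturally into basic tiles of dimensions $\leq j$ coming from those faces of $T^n_k$ that lie in the skeleton. A critical Morse tile of index $i$ and dimension $n$ either already lies below the skeleton (in which case its critical tile survives) or contributes, after intersection, a single critical piece of index $i$ together with auxiliary basic pieces of lower dimension. That the dimension-filtration property is inherited from the original tiling is then formal, and this yields Corollary \ref{Cor_SkeletonM}.

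For the subdivision statement, the heart of the matter is a local model: for every Morse tile $\tau$ of dimension $n$, one must produce a Morse tiling of $\Sd(\tau)$ containing exactly the same number of critical tiles as $\tau$, with the same indices (zero if $\tau$ is basic, one critical tile of index $i$ if $\tau$ is critical of index $i$). I would construct this by induction on $n$: star at the barycenter of the distinguished face of $\tau$ (its face of maximal codimension in the critical case, or a canonical choice determined by the missing facets in the basic case), then lift through the conical structure of $\Sd$ the Morse tilings already constructed inductively on the subdivisions of the lower-dimensional faces of $\tau$. The critical small simplex clustered around the barycenter of the distinguished face will carry index $i$ by construction, giving both Corollary \ref{Cor_BarSubM} and the numerical content of the theorem.

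The shellable case is handled by lifting the filtration. A shelling $\emptyset=K_0\subset\cdots\subset K_N=K$ induces filtrations $\textup{Sk}_j(K_0)\subset\cdots\subset\textup{Sk}_j(K_N)$ and $\Sd(K_0)\subset\cdots\subset\Sd(K_N)$, and within each layer one totally orders the local Morse tiling constructed above so as to match the already shelled boundary; carrying this ordering along during the inductive construction yields Theorem \ref{Thm_SkelBarSubShell}. The main obstacle is the inductive construction of the local tiling on $\Sd(\tau)$ for a critical $\tau$: one must track across the induction which small simplex near the distinguished barycenter is the critical one, verify that its index is exactly $i$, and ensure that the tiles can be totally ordered into a shelling compatible with the boundary shelling inherited from the faces of $\tau$. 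This combinatorial bookkeeping — in particular producing exactly one critical tile per critical $\tau$ with the correct index — is where the real work of the theorem lies.
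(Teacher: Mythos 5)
Your global architecture is the same as the paper's: tile the skeleton of each Morse tile by a case analysis (this is exactly Propositions \ref{Prop_UniqueTile} and \ref{Prop_JSkeletonM}, glued as in Corollary \ref{Cor_SkeletonM}), tile $\Sd$ of each tile by a local model and glue (Corollary \ref{Cor_BarSubM}), and in the shellable case concatenate the local shellings along the filtration (Theorem \ref{Thm_SkelBarSubShell}). But there is a genuine gap: the entire content of the theorem sits in the local model for $\Sd$ of a single Morse tile --- the paper's Theorem \ref{Thm_Sd} --- and you do not prove it; you sketch a construction and then explicitly defer ``the combinatorial bookkeeping'' of producing exactly one critical tile of the correct index and ordering the tiles into a shelling compatible with the boundary. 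That bookkeeping is not routine, it is the proof. Moreover, the mechanism you propose is not the right one as stated: $\Sd(\Delta_n)$ is a cone whose apex is the barycenter of $\Delta_n$ itself over $\Sd(\partial\Delta_n)$; it is not a cone over the barycenter of the distinguished (removed) face, so ``starring at the barycenter of the distinguished face'' does not give a decomposition of $\Sd(\tau)$ to which an induction on the faces can be lifted. The paper instead cones at the barycenter of $\Delta_n$ over an inductively shelled $\Sd(\partial\Delta_n)=\bigsqcup_{k}\Sd(T^{n-1}_k)$, removes the bases of the cones over $\Sd(T^{n-1}_j)$, $j<k$, to obtain the shelling of $\Sd(T^n_k)$, and only then removes $\Sd(T^{k-1}_k)$ (resp.\ $\Sd(T^l_k)$) to treat $C^n_k$ (resp.\ $T^{n,l}_k$); the unique critical tile of $\Sd(C^n_k)$ is located by tracking the unique order-$k$ tile through this induction, not by a neighborhood of a barycenter.

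Two further points your sketch glosses over. First, the regular tiles appearing in $\Sd(C^n_k)$ are necessarily non-basic Morse tiles $T^{n,k-1}_m$ with $0<m<k$ (the paper remarks that $\Sd(C^3_2)$ admits no partition into basic and critical tiles alone), so any ``critical piece plus basic pieces'' picture of the local model is too naive; your phrasing leaves this open but the construction must produce and control these tiles. Second, your claim that verifying the dimension-filtration condition after gluing is ``formal'' silently uses that the local tiling of $\Sd(T)$ consists only of tiles of the same dimension as $T$, so that the union of tiles of dimension greater than $j$ in $\Sd(S)$ is exactly $\Sd$ of the corresponding union in $S$; this top-dimensionality is built into the statement of Theorem \ref{Thm_Sd} and has to be carried through the induction, not assumed. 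As it stands, the proposal correctly identifies where the difficulty lies but does not resolve it, and the specific coning device it offers would need to be replaced by the cone-at-the-barycenter-of-$\Delta_n$ construction (or an equivalent) before the argument could be completed.
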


This tiling theorem  relies in particular on the fact that the first barycentric subdivision of a Morse tile is itself a disjoint union of Morse tiles, see Theorem \ref{Thm_Sd}.
Given a Morse tiling on a finite simplicial complex $K$, we also deduce packings by disjoint simplices in its successive barycentric subdivisions, see  Proposition \ref{Proppack}. Such packings were used in \cite{SW3} 
to improve upper estimates on the expected Betti numbers of random subcomplexes. 

We then associate to every Morse tiling  a set of discrete vector fields, in the sense of Robin Forman \cite{F1, F2}, which are compatible with the tiling. Their critical points are in one-to-one  correspondence with the critical Morse tiles, see subsection \ref{SSect_VF}. Moreover, due to Theorem~9.3 of \cite{F1}, these vector fields are gradient vector fields of discrete Morse functions provided they have no non-stationary  closed paths, see subsection \ref{subsecdiscrete}.  We provide a criterium for the latter condition to be satisfied, Theorem \ref{Thm_DescreteMorseFiltration}, that applies to  Morse shellable complexes. 
 
We thus get, see Corollary \ref{Cor_Mshellable}.
\begin{theorem}\label{Thm_intro0B}
Any discrete vector field compatible with a Morse shelling on a finite simplicial complex is the  gradient vector field of a discrete self-indexing Morse function whose critical points are in one-to-one correspondence  with the critical tiles of the shelling, preserving the index.
\end{theorem}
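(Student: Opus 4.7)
My plan is to decompose the statement into three sub-claims: (i) the index-preserving bijection between critical simplices of $V$ and critical tiles of the shelling, (ii) the fact that $V$ is the gradient of some discrete Morse function at all, and (iii) that this Morse function can be chosen self-indexing. Point (i) is automatic from the construction of compatible discrete vector fields in subsection~\ref{SSect_VF}: a critical simplex of $V$ is by definition the distinguished face of a critical Morse tile, and its dimension equals the index assigned to that tile. So the real content lies in establishing (ii) and (iii).

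For (ii), the strategy is to combine Forman's Theorem~9.3 of \cite{F1} with Theorem~\ref{Thm_DescreteMorseFiltration}. Forman's theorem states that a discrete vector field is the gradient of a Morse function if and only if it has no non-stationary closed $V$-path. Theorem~\ref{Thm_DescreteMorseFiltration} yields exactly this absence of closed paths under the shellability hypothesis: the filtration $\emptyset = K_0 \subset K_1 \subset \cdots \subset K_N = K$ forces every $V$-arrow to remain inside the tile being added, so any $V$-path either stays inside one tile or descends strictly along the filtration index, ruling out cycles. This produces some discrete Morse function $f_0$ with gradient $V$, but a priori far from self-indexing.

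For (iii), rather than rescaling $f_0$, I would construct a self-indexing $f$ directly by induction on the shelling index~$i$. When the $i$-th tile $\tau_i = K_i \setminus K_{i-1}$ is added, if it is critical of index $p$ I assign the value $p$ to its unique distinguished face (which has dimension $p$); the remaining simplices of $\tau_i$ receive values within a narrow window around their dimensions, chosen so that each $V$-pair $\sigma \prec \tau$ inside $\tau_i$ satisfies $f(\sigma) \geq f(\tau)$ and every non-pair incidence satisfies the strict Forman inequality. For a regular tile, the same scheme applies with no critical simplex to pin. Nesting the perturbation windows in disjoint intervals around consecutive integers forces critical simplices to sit exactly at their indices.

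The main obstacle is the inter-tile compatibility at the interface of $\tau_i$ with $K_{i-1}$: faces of $\tau_i$ meeting $\Bd K_{i-1}$ already carry values assigned at earlier steps, and the new values inside $\tau_i$ must respect all Forman inequalities with respect to these. The leverage is that every $V$-pair created at step $i$ lives entirely inside $\tau_i$ by compatibility of $V$ with the shelling, so the interface only imposes strict face-increase inequalities between old and new values, enforceable by keeping the perturbation windows small enough. Finalising the construction then requires going through each of the $n+2$ combinatorial shapes of Morse tiles in dimension $n$, writing down an explicit assignment realising the internal pairings with the distinguished face receiving exactly its dimension, and checking the Forman axioms across the boundary; this combinatorial case analysis is the technical heart of the proof.
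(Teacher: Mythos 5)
Your decomposition into (i) and (ii) reproduces the paper's own argument: compatibility forces exactly one critical simplex per critical tile and none in the regular tiles (Proposition \ref{Prop_DiscreteMorse}, Corollary \ref{Cor_DiscreteMorse}, Theorem \ref{Thm_DiscreteMorse}), and the shelling filtration with $K_0=\emptyset$, combined with Lemma \ref{Lemmaclosed} (no non-stationary closed path inside a single tile) and the fact that a path can never leave a previously attached subcomplex, rules out non-stationary closed paths (Theorem \ref{Thm_DescreteMorseFiltration}), so Forman's Theorem \ref{Thm_9.3} applies. Where you genuinely diverge is (iii), and there the detour is unnecessary: the ``moreover'' clause of Theorem \ref{Thm_9.3} already asserts that $f$ can be chosen so that every critical $p$-simplex has value $p$, and this is exactly what the paper (and the statement) mean by \emph{self-indexing}. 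The paper therefore obtains (iii) for free from the same citation you use in (ii); you are in effect re-proving a piece of Forman's theorem by hand.

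Taken on its own, your (iii) is moreover not yet a proof. You defer the tile-by-tile case analysis which you yourself call the technical heart, and the prescription ``values within a narrow window around their dimensions'' is inconsistent for a pair $\sigma\prec\tau$ of the vector field: the Forman condition you impose, $f(\sigma)\geq f(\tau)$ with $\dim\tau=\dim\sigma+1$, forces $f(\tau)$ to lie near $\dim\sigma$, not near $\dim\tau$, so the nesting of windows and the strict inequalities at the interface with $K_{i-1}$ (where an old facet may be a lower member of a pair with value above its dimension) would have to be organized differently than stated. None of this is needed once Theorem \ref{Thm_9.3} is quoted in full. Finally, a small point on (i): the critical simplex carried by a critical tile $C^n_k$ is not the removed $(k-1)$-dimensional face (which does not even belong to the tile) but the $k$-simplex whose pairing with that face has been destroyed, as in Corollary \ref{Cor_DiscreteMorse}; your dimension count is correct because in (iii) you implicitly use this $k$-dimensional simplex.
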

Such a relation with discrete Morse theory is well known in the case of shellings, see for instance \cite{Cha}. However,
this result applies to  all triangulations of closed manifolds in dimension one and two. Indeed, 
 
\begin{theorem}\label{Thm_intro1}
Every closed triangulated manifold of dimension less than three is Morse shellable.

\end{theorem}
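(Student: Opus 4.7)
The plan is to treat the three cases---dimensions $0$, $1$, and $2$---separately. In dimension $0$ the manifold is a finite disjoint union of vertices, each being a critical $T^0_0$ tile, and the trivial filtration adding vertices one at a time is a Morse shelling. In dimension $1$ the manifold decomposes into simplicial cycles; each cycle $v_0 v_1 \cdots v_{n-1} v_0$ is Morse shelled by placing $v_0$ as a critical $T^0_0$, then adding successively the edges $[v_{i-1}, v_i]$ for $i = 1,\ldots,n-1$ as non-critical half-open basic tiles $T^1_1$ (each attached at the existing endpoint, carrying its far vertex), and finally closing the cycle with the open edge $[v_{n-1}, v_0]$, a critical $T^1_2$ of index one.

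The heart of the argument is dimension two. Let $S$ be a closed connected triangulated surface (components being shelled one after another). The plan is to use a tree-cotree decomposition: pick a vertex $v_0$ and a spanning tree $T$ of the $1$-skeleton of $S$ rooted at $v_0$, then form the dual graph $G^*$ of $S$ (its vertices are the $2$-simplices of $S$, its edges are the codimension-one adjacencies), and select in $G^*$ a spanning tree $T^*$ rooted at a chosen $2$-simplex $\sigma_\infty$ such that no edge of $T^*$ is dual to an edge of $T$. Call \emph{handle edges} the edges of $S$ neither in $T$ nor dual to $T^*$; they number exactly $2 - \chi(S)$.

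With this data the Morse shelling is built in five successive blocks. First, place $v_0$ as a critical $T^0_0$ of index zero. Second, add the edges of $T$ in breadth-first order from $v_0$, each as a non-critical $T^1_1$ tile carrying its far endpoint. Third, add each handle edge as an open edge $T^1_2$, critical of index one. Fourth, traverse the non-root $2$-simplices in reverse breadth-first order on $T^*$: at the moment a $2$-simplex $\sigma$ is reached, exactly two of its three edges---the tree edges, the handle edges, and the already-processed children's paired edges---lie in the current subcomplex, so $\sigma$ is added together with the remaining missing edge $e_\sigma$ (the one connecting $\sigma$ to its parent in $T^*$) as a non-critical $T^2_2$ tile. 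Fifth, add $\sigma_\infty$, whose entire boundary now lies in the current subcomplex, as a critical $T^2_3$ of index two. At each block the intermediate set is readily a simplicial subcomplex and each piece added is a basic Morse tile of the asserted type, and the total critical count is $1 + (2 - \chi(S)) + 1 = 4 - \chi(S)$.

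The main obstacle is securing the tree-cotree decomposition, namely the existence of a spanning tree $T^*$ of $G^*$ disjoint from $T$. This is a classical fact: a regular neighborhood of $T$ in $S$ is an open disk, so $S \setminus T$ is a connected surface with boundary, and its dual graph, which is precisely $G^*$ with the $T$-dual edges removed, remains connected; any spanning tree of this residual graph serves as $T^*$. Once the decomposition is in place, the remaining verifications---that each added piece is a basic Morse tile of the claimed type and that each $K_i$ is a simplicial subcomplex---reduce to straightforward direct checks at each of the five blocks.
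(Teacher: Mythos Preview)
Your construction is not a Morse tiling in the sense of Definition~\ref{Defn_Morse tiling}, so it cannot be a Morse shelling. The issue is the dimension condition: a Morse tiling requires that for every $j\geq 0$, the union of tiles of dimension greater than $j$ be (the trace on $S$ of) a simplicial subcomplex. In your surface argument you mix tiles of dimensions $0$, $1$, and $2$. Taking $j=0$, the union of all tiles of dimension $>0$ is $K\setminus\{v_0\}$, which is not a subcomplex since $v_0$ is a face of edges and triangles of $K$. Taking $j=1$, the union of your $2$-dimensional tiles consists of all open $2$-simplices together with only the open edges dual to $T^*$; this again fails to be a subcomplex. The same defect already appears in your one-dimensional argument: putting $v_0$ as a separate $T^0_0$ tile makes the union of $1$-dimensional tiles equal to the circle minus $v_0$, not a subcomplex. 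Equivalently (see the proof of Theorem~\ref{Lemmashel}), the closure of each tile in a shelling must be a maximal simplex of $K$, whereas your $v_0$, tree edges, and handle edges all sit as proper faces of $2$-simplices.

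The paper avoids this by using only top-dimensional tiles. It starts with a closed $2$-simplex $K_1$ and, at each step, finds a boundary edge of $K_i$ and attaches the unique triangle $T$ of $K$ on the other side; the piece $T\setminus K_i$ is then a $2$-simplex missing at least one facet and at most one additional face of higher codimension, hence a genuine Morse tile by Definition~\ref{Defn_Morse}. Since all tiles are $2$-dimensional, the dimension condition is automatic. Your tree--cotree idea gives a pleasant count of critical cells, but to turn it into a Morse shelling you would have to absorb the vertex, the tree edges, and the handle edges into $2$-dimensional Morse tiles rather than listing them as separate lower-dimensional tiles; as written, the argument does not establish the theorem.
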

Moreover it  applies to all closed three-manifolds for some  of their triangulations.
 
\begin{theorem} \label{Thm_intro2}
Every closed manifold of dimension less than four carries a Morse shellable triangulation. Moreover given any smooth Morse function on the manifold, there exists a Morse shelled triangulation whose number of critical tiles coincides with the number of critical points of the function, in any index.  \end{theorem}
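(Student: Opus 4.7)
The first assertion follows from the second applied to any smooth Morse function on $M$, so the focus is on the second. In dimensions one and two every triangulation is already Morse shellable by Theorem~\ref{Thm_intro1}; the refinement here is to arrange the critical-tile counts to match those of the prescribed $f$. The plan in all dimensions $n\leq 3$ is to use $f$ to build a handle decomposition of $M$ and then to construct the triangulation handle by handle, so that each handle contributes exactly one critical Morse tile of the matching index to a global shelling.

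To implement this, order the critical points $p_1,\ldots,p_m$ of $f$ by increasing critical value and let $H_i$ be the handle of index $k_i=\ind(p_i)$, so that $M_i := H_1\cup\cdots\cup H_i$ is obtained from $M_{i-1}$ by gluing $H_i$ along its attaching sphere $S^{k_i-1}\times D^{n-k_i}\subset \partial M_{i-1}$. I would construct inductively a triangulation $\tT_i$ of $M_i$ extending $\tT_{i-1}$, together with an ordering of the newly added simplices $\tT_i\setminus\tT_{i-1}$, so that they form a Morse shellable piece containing a unique critical Morse tile of index $k_i$ in the sense of Definition~\ref{Defn_Morse}. Concatenating these local shellings in the order $i=1,\ldots,m$ then yields a Morse shelling of $\tT:=\tT_m$ whose critical tiles are in bijection with the $p_i$, preserving the index. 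The combinatorial heart of this induction is a local model for each handle $H=D^{k}\times D^{n-k}$ with $n\leq 3$: an explicit simplicial decomposition together with a shelling containing exactly one critical tile of index $k$ and regular tiles elsewhere. For $k=0$ or $k=n$ the handle is simply a single $n$-simplex, producing the tile $T^n_0$ or $T^n_{n+1}$; for the intermediate cases, relevant when $n=2,3$, one must cut the prism $D^k\times D^{n-k}$ into a small number of simplices and designate one of them as critical by removing an additional face of maximal codimension.

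Matching a local model to $\tT_{i-1}$ along the attaching region requires the two boundary triangulations to agree. I would arrange this by pre-adapting $\tT_{i-1}|_{\partial M_{i-1}}$ near the attaching sphere at each inductive step, performing barycentric subdivisions when necessary; by Theorem~\ref{Thm_intro0} such subdivisions preserve both Morse shellability and the critical-tile counts with their indices, so they do not disturb the bijection being built. The hard part is producing the explicit handle models for indices $k=1,2$ in dimension three, where the solid prisms $D^1\times D^2$ and $D^2\times D^1$ must be simplicially decomposed so as to admit a shelling of the above form that is compatible with essentially arbitrary boundary triangulations inherited from $\partial M_{i-1}$. Once this local combinatorial step is carried out, the induction closes and the theorem follows.
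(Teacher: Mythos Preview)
Your overall strategy---handle decomposition from $f$, local shelled models for each handle, barycentric subdivisions to preserve shellability while adjusting boundaries---is exactly the one the paper follows. However, the proposal as written is an outline rather than a proof: you defer precisely the steps that carry all the content, and one of your quick claims is incorrect.

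The statement that for $k=n$ ``the handle is simply a single $n$-simplex, producing the tile $T^n_{n+1}$'' is wrong. An $n$-handle is glued along its entire boundary $(n-1)$-sphere, and that sphere inherits whatever triangulation $\partial M_{i-1}$ already has, typically with many simplices. You cannot cap it with a single open simplex unless the boundary happens to be $\partial\Delta_n$. The paper handles this by first attaching a sequence of basic tiles of order $\geq 2$ to simplify the triangulated boundary sphere (reducing vertex valences and then removing vertices of valence three, in dimension three) until it becomes $\partial\Delta_{n+1}$, and only then capping with $T^3_1\sqcup T^3_2\sqcup T^3_3\sqcup T^3_4$. Similarly, for the $2$-handle in dimension three the attaching region is an annulus in $\partial M_{i-1}$ with an a priori uncontrolled simple triangulation; the paper encodes such triangulations by cyclic words in $\{d,u\}$ and shows, via explicit ``compression'' and ``suppression'' moves realised by attaching basic tiles of order two together with one barycentric subdivision, that any such word can be reduced to the fixed word $ududdu$ bounding the model handle of Corollary~\ref{Cor_Handle}. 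Your phrase ``pre-adapting $\tT_{i-1}|_{\partial M_{i-1}}$'' hides exactly this work, and barycentric subdivision alone does not suffice. These boundary-normalisation procedures are the substance of the proof; without them the induction does not close.
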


Not every Morse tiling shares the property given by Theorem \ref{Thm_intro0B} and  not every simplicial complex is Morse tileable, see subsection \ref{SSect_Remarks}. It would be of interest to find a triangulation of a closed manifold which is not Morse tileable. By \cite{Welsch3}, every triangulation becomes Morse shellable after finitely many stellar subdivisions at maximal simplices.
Given  a Morse  tiling on a closed triangulated manifold, there are many different compatible discrete vector fields  and thus many associated discrete Morse functions in the case of Theorem \ref{Thm_intro1} and Theorem \ref{Thm_intro2},  but they all have the same number of critical points with same indices. These critical points are in one-to-one correspondence with the critical tiles of the tiling, preserving the index. Such a Morse tiling thus provides an efficient way to bound the topology of the manifold. 

\begin{corollary}\label{Cor_intro} Let $X$ be a closed triangulated $n$-manifold equipped with a Morse tiling $\mathcal{T}$ admitting a compatible discrete Morse function. Then, each Betti number $b_k(X)$ of $X$ is bounded from above by the number of critical tiles $c_k(\mathcal{T})$ of index $k$ of $\mathcal{T}$ and the Morse inequalities hold true, namely $\sum_{i=0}^k(-1)^{k-i}b_i(X)\leq \sum_{i=0}^k(-1)^{k-i}c_i(\mathcal{T}) $  for every $0\leq k\leq n$ with equality if $k=n.$
\end{corollary}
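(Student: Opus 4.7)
The plan is to reduce the corollary to Forman's classical Morse inequalities applied to the compatible discrete Morse function furnished by the hypothesis. Since $\mathcal{T}$ admits, by assumption, a compatible discrete Morse function $f\colon X\to\R$ in the sense of Forman \cite{F1, F2}, the one-to-one index-preserving correspondence between the critical simplices of $f$ and the critical tiles of $\mathcal{T}$ (described in subsection~\ref{SSect_VF} and underlying Theorem~\ref{Thm_intro0B}) yields $c_k(f)=c_k(\mathcal{T})$ for every $0\leq k\leq n$, where $c_k(f)$ denotes the number of critical $k$-simplices of $f$.

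Next I would invoke the weak and strong Morse inequalities of Forman: for every $0\leq k\leq n$,
$$b_k(X)\leq c_k(f)\quad\textup{and}\quad \sum_{i=0}^k(-1)^{k-i}b_i(X)\leq \sum_{i=0}^k(-1)^{k-i}c_i(f).$$
Both follow from the existence of a free chain complex $(C_*(f),\Bd_f)$ whose $k$-th chain group has rank $c_k(f)$ and which computes the singular homology of $X$. Substituting $c_i(f)=c_i(\mathcal{T})$ gives the stated inequalities. For $k=n$, the two alternating sums both evaluate to $(-1)^n\chi(X)$, since the Euler characteristic can be read off either from the Betti numbers or from the ranks of the chain groups of the Morse complex; this produces the claimed equality.

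There is essentially no genuine obstacle: the entire analytic content is packaged in the hypothesis that a compatible discrete Morse function exists, which is automatic on any Morse shellable complex by Theorem~\ref{Thm_intro0B}. The only point to check carefully, in order to apply the result under the general hypothesis of the corollary and not only to discrete Morse functions arising from shellings, is that the correspondence of subsection~\ref{SSect_VF} is index-preserving for \emph{every} discrete Morse function whose gradient vector field is compatible with $\mathcal{T}$. This follows at once from the construction of that correspondence, which associates to a critical tile of index $k$ a critical simplex of index $k$ purely by inspection of the tile's combinatorial type, independently of the specific compatible discrete Morse function chosen.
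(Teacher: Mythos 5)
Your proof is correct and follows essentially the same route as the paper: you use the index-preserving correspondence between critical tiles and critical points of the compatible discrete Morse function (Theorem \ref{Thm_DiscreteMorse}), then invoke the discrete Morse complex of Theorem~7.3 of \cite{F1}, whose chain groups have rank $c_k(\mathcal{T})$ and which computes the homology of $X$, to deduce the classical Morse inequalities. Your additional observations on the $k=n$ equality via the Euler characteristic and on the independence of the correspondence from the particular compatible function chosen are consistent with, and implicit in, the paper's argument.
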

A Morse shelling on a finite simplicial complex induces in fact two spectral sequences which converge to the (co)homology of the complex and whose first pages are free modules over the critical tiles of the shelling, see \cite{Welsch2}.
It would be of interest to extend Theorem \ref{Thm_intro2} to all dimensions. We also actually do not know which are the closed three-manifolds that admit  $h$-tileable triangulations, see subsection \ref{SSect_Remarks}. 

The second section of this paper is devoted to Morse tiles and tilings with the proof of Theorem \ref{Thm_intro0} while the third one is devoted to discrete Morse theory and the proofs of Theorem \ref{Thm_intro0B},  Theorem \ref{Thm_intro1}, Theorem \ref{Thm_intro2} and Corollary \ref{Cor_intro}.\\

\textbf{Acknowledgement:}
The second author is partially supported by the ANR project MICROLOCAL (ANR-15CE40-0007-01).

\section{Morse tilings}

\subsection{Morse tiles}\label{Ssect_htiling}

Let us recall that an $n$-simplex is the convex hull of $n+1$ points affinely independent in some real affine space and that the standard $n$-simplex $\Delta_n$ is the one spanned by the standard affine basis of $\R^{n+1}$, see~\cite{M}. These are all isomorphic one to another by some affine isomorphism. A face of a simplex is the convex hull of a subset of its vertices.

For every $n>0$ and every $k\in \{0,\ldots,n+1\}$, we set $T_k^n=\Delta_n\setminus (\sigma_1\cup\ldots\cup\sigma_k),$ where $(\sigma_i)_{ i\in \{1,\ldots,n+1\}}$ denote the facets of $\Delta_n$, that is its codimension one faces.  In particular, the tile $T_{n+1}^n$ is the open $n$-simplex $\stackrel{\circ}{\Delta}_n$ and $T_0^n = \Delta_n$ is the closed one.
These standard tiles were introduced in \cite{SW3} and one of their key properties is the following.

\begin{proposition}[Proposition~4.1 of \cite{SW3}] \label{Prop_4.1}
For every $n>0$ and every $k\in \{0,\ldots, n+1\},$ $T^{n+1}_k$ is a cone over $T_k^n$, deprived of  its apex if $k\neq 0$. Moreover, 
$T^{n+1}_k$ is a disjoint union  $T_{n+2}^{n+1}\sqcup T_k^n\sqcup T_{k+1}^n\sqcup\ldots \sqcup T^n_{n+1}.$
In particular,  the cone $T_k^{n+1}$ deprived of its base $T^n_k$ is $T_{k+1}^{n+1}$.
\end{proposition}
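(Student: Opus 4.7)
The plan is to fix barycentric coordinates $(x_0,\ldots,x_{n+1})$ on $\Delta_{n+1}$ and to label the facets as $\tau_i=\{x_{i-1}=0\}$ for $i\in\{1,\ldots,n+2\}$, so that $T^{n+1}_k=\{x\in\Delta_{n+1}:x_0>0,\ldots,x_{k-1}>0\}$. I would single out $v=e_{n+1}$ as the candidate apex and identify the opposite facet $\tau_{n+2}$ (via the coordinates $x_0,\ldots,x_n$) with $\Delta_n$, whose own facets are $\sigma_i:=\tau_i\cap\tau_{n+2}$ for $i\leq n+1$.

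For the cone assertion, I would write every $x\in\Delta_{n+1}\setminus\{v\}$ uniquely as $(1-t)y+tv$ with $t=x_{n+1}\in[0,1)$ and $y\in\tau_{n+2}$, and observe that for $i\leq n+1$ one has $x\in\tau_i\iff y\in\sigma_i$, while $v\in\tau_i$ for every such $i$. This immediately identifies $T^{n+1}_k$ with the set of line segments from $v$ to points of $T^n_k\subset\tau_{n+2}$, the apex being retained exactly when $k=0$.

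For the disjoint union, I would stratify $T^{n+1}_k$ by the function $j(x)$, defined as the largest index in $\{k,\ldots,n+1\}$ for which $x_{j(x)}=0$, setting $j(x)=n+2$ when no such index exists. The stratum $j=n+2$ is precisely the open simplex $T^{n+1}_{n+2}$. For $j\in\{k,\ldots,n+1\}$, the stratum lies inside the $n$-simplex $\tau_{j+1}$ and is cut out, inside $\tau_{j+1}$, by the strict inequalities $x_m>0$ for $m\in\{0,\ldots,k-1\}\cup\{j+1,\ldots,n+1\}$; hence it is affinely isomorphic to $T^n_{k+n+1-j}$. As $j$ runs over $\{k,k+1,\ldots,n+1\}$ the exponent $k+n+1-j$ sweeps $\{n+1,n,\ldots,k\}$ bijectively, yielding the claimed partition.

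The concluding sentence follows because the base $T^n_k$ of the cone is exactly the stratum $j=n+1$, sitting on $\tau_{n+2}$; removing it from $T^{n+1}_k$ amounts to deleting one further facet of $\Delta_{n+1}$, and by the $S_{n+2}$-symmetry permuting the facets of $\Delta_{n+1}$ the resulting set represents the same affine class $T^{n+1}_{k+1}$. The main obstacle, such as it is, lies in the stratification bookkeeping: verifying that on each $A_j$ the inherited positivity constraints together with the maximality condition defining $j$ translate into removing exactly $k+(n+1-j)$ facets of $\tau_{j+1}$, and that the resulting indices realize each value in $\{k,\ldots,n+1\}$ exactly once. Everything else reduces to the cone description established in the first step.
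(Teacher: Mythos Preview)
Your proof is correct. The cone assertion is handled essentially as in the paper, which also singles out a vertex $c$ (your $v=e_{n+1}$) and uses that coning commutes with removing the facets through $c$. The difference lies in how you obtain the disjoint union. The paper first proves the last clause, $T^{n+1}_k\setminus T^n_k=T^{n+1}_{k+1}$, directly from the cone description, and then deduces the partition by iterating this identity: each step peels off one boundary tile $T^n_k$ and raises the order by one, so a decreasing induction from $k=n+2$ yields $T^{n+1}_k=T^{n+1}_{n+2}\sqcup T^n_{n+1}\sqcup\ldots\sqcup T^n_k$. You instead stratify $T^{n+1}_k$ in one shot by the largest index $j$ with $x_j=0$, which is exactly the unrolling of that induction. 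The paper's route is a touch more economical and foregrounds the ``remove the base, increase the order'' mechanism that is reused repeatedly later (e.g.\ in Proposition~\ref{Prop_ConeMorse} and Theorem~\ref{Thm_Sd}); your coordinate stratification is more self-contained and makes explicit on which facet of $\partial\Delta_{n+1}$ each tile $T^n_m$ actually sits. Both arguments are short, and your bijection $j\mapsto k+n+1-j$ onto $\{k,\ldots,n+1\}$ is clean.
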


\begin{proof}
If $k=0,$ $T_{0}^{n+1}=\Delta_{n+1}=c\ast T_{0}^n$, where $c$ denotes a vertex of $\Delta_{n+1}$. If $k>0$, $T_k^n=\Delta_n\setminus (\sigma_1\cup\ldots\cup\sigma_{k})$ by definition, where $\sigma_i$ is a facet for every $i\in\{1,\ldots, k\},$ and so  $(c\ast T^n_k)\setminus \{c\}=(c\ast \Delta_n)\setminus ((c\ast \sigma_1)\cup\ldots\cup (c\ast \sigma_{k})).$ However, $c\ast \Delta_n=\Delta_{n+1}$ and $\theta_i=c\ast \sigma_i$ is an $n$-simplex,  $i\in \{1,\ldots, k\}$. It follows from the definition that $T_{k}^{n+1}=(c\ast T_k^n)\setminus \{c\}.$ It is the cone over $T_k^n$ deprived of its apex $c$. The base $T_k^n$  of this cone is the intersection of $T_{k+1}^{n+1}$ with the base $\theta=\Delta_n$ of the cone $\Delta_{n+1}=c\ast \Delta_n$. Thus, $T_{k}^{n+1}\setminus T_{k}^{n}=(c\ast \Delta_n)\setminus (\theta_1\cup\ldots\cup \theta_{k}\cup \theta)=T^{n+1}_{k+1}$.  The result holds true for $k=0$ as well, since by definition $T_{1}^{n+1}=\Delta_{n+1}\setminus \Delta_n=T_{0}^{n+1}\setminus T^n_{0}.$
By induction, we deduce that for every $k\in \{0,\ldots,n+1\},$ $T_{k}^{n+1}\cap \partial \Delta_{n+1}$ is the disjoint union $T_k^n\sqcup\ldots\sqcup T^n_{n+1}.$ 
\end{proof} 

\begin{definition}\label{Defn_Basic}
 A basic tile is a subset of a simplex isomorphic to a standard tile $T^n_k$ via some affine isomorphism. The integer $n\geq0$ is the dimension of the tile while $k\in\{0,\ldots,n+1\}$ is the order of the tile.
\end{definition}

The $j$-skeleton  of a tile $T_k^n$ is by definition the intersection of the $j$-skeleton of $\Delta_n$ with $T_k^n\subset \Delta_n$. Proposition \ref{Prop_4.1} provides a partition of the $(n-1)$-skeleton of $T^n_k$ by basic tiles and by induction  it provides a partition of all its skeletons by basic tiles of the corresponding dimensions.

\begin{proposition}\label{Prop_UniqueTile}
For every $n\geq 0$, every $0\leq k\leq n+1$ and every $j\in \{k-1,\ldots, n\},$ any partition of the $j$-skeleton  of $T^n_k$ given by Proposition \ref{Prop_4.1} contains only tiles of order $\geq k.$ Moreover, it contains  a unique tile of order $k$ which is the trace of a $j$-dimensional face of $\Delta_n$ with   $T^n_k$. If $j<k-1,$ the $j$-skeleton of $T^n_k$ is empty.
\end{proposition}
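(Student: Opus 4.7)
The plan is to prove both parts by induction on $n$, handling the emptiness statement first. For $j<k-1$ I would argue directly: a $j$-face $\tau$ of $\Delta_n$ carries at most $j+1\leq k-1$ vertices, so it must omit at least one of the $k$ vertices of $\Delta_n$ opposite to the removed facets $\sigma_1,\ldots,\sigma_k$. If $v_i$ is such a missing vertex, then $\tau\subseteq\sigma_i$ and hence $\tau\cap T^n_k=\emptyset$. Taking the union over all $j$-faces shows that the $j$-skeleton of $T^n_k$ is empty.

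Next, for $j\geq k-1$, I would induct on $n$ with the trivial base case $n=0$, and handle the case $j=n$ separately: the $n$-skeleton of $T^n_k$ is $T^n_k$ itself, a single tile of order $k$ that is the trace of $\Delta_n$ with $T^n_k$. For $j<n$, the $j$-skeleton sits inside $\partial\Delta_n$, so Proposition~\ref{Prop_4.1} yields
\[
(j\text{-skeleton of } T^n_k) = \bigsqcup_{l=k}^{n}(j\text{-skeleton of } T^{n-1}_l),
\]
where each $T^{n-1}_l$ is supported on a distinct facet of $\Delta_n$ and the unique piece of order $k$ sits on the base facet $\theta$ of the cone structure chosen to apply Proposition~\ref{Prop_4.1}. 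The inductive hypothesis, applied to each $T^{n-1}_l$, says that its $j$-skeleton is partitioned into tiles of order $\geq l$, with a unique tile of order $l$ (when $j\geq l-1$) realized as the trace of a $j$-face of the supporting $(n-1)$-face of $\Delta_n$ with $T^{n-1}_l$.

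Combining, every tile in the resulting partition of the $j$-skeleton of $T^n_k$ has order $\geq k$; and since pieces with $l>k$ contribute only tiles of order strictly greater than $k$, the unique order-$k$ tile must come from the $T^{n-1}_k$ piece. Writing this tile as $\tau\cap T^{n-1}_k$ for a $j$-face $\tau$ of $\theta$, I note that $\tau$ is also a $j$-face of $\Delta_n$ and that $\tau\cap T^{n-1}_k=\tau\cap T^n_k$ since $\tau\subseteq\theta$ and $T^{n-1}_k=\theta\cap T^n_k$, as required. The only point requiring care is the propagation of the "trace of a $j$-face" property through the recursion, which reduces to the chain of inclusions $\tau\subseteq\theta\subseteq\Delta_n$ and the fact that $T^{n-1}_k$ is precisely the restriction of $T^n_k$ to the chosen base facet.
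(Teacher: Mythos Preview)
Your proof is correct and follows essentially the same approach as the paper: both iterate Proposition~\ref{Prop_4.1} to pass from $T^n_k$ to lower-dimensional tiles, with the paper organizing this as a decreasing induction on $j$ and you as an induction on $n$. Your version is more detailed—in particular, you give a direct pigeonhole argument for the emptiness when $j<k-1$ and explicitly verify the propagation of the ``trace of a $j$-face'' property—whereas the paper's proof is a brief sketch that handles the case $j=n-1$ and then appeals to iteration.
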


\begin{proof}
This result is given by Proposition \ref{Prop_4.1} when $j=n-1$  and $T^{n-1}_k$ is indeed the trace of a facet of $\Delta_n$ with $T^n_k$, since it is the intersection of the subcomplex $T^{n-1}_0\sqcup\ldots\sqcup T^{n-1}_k$ of $\Delta_n$ with $T^n_k$, while $T^{n-1}_0\sqcup\ldots\sqcup T^{n-1}_{k-1}$ is disjoint from $T^n_k$. The result then follows from Proposition \ref{Prop_4.1} and a decreasing induction in general.
\end{proof}

Let now $\tau$ be a face of $\Delta_n$ not contained in $\sigma_1\cup\ldots\cup\sigma_k$ and let $l$ be its dimension, so that $k \leq l+1 \leq n$. We set $T_k^{n,l}=\Delta_n\setminus (\sigma_1\cup\ldots\cup\sigma_k\cup\tau)$, it is uniquely defined by $k,l,n$ up to permutation of the vertices of $\Delta_n$.

\begin{definition} \label{Defn_Morse}
A Morse tile is a subset of a simplex isomorphic to $T_k^{n,l}$, $0 \leq k \leq l+1 \leq n$, via some affine isomorphism. It is critical when $l = k-1$ and $k$ is then said to be its index while 
it is regular otherwise. 
\end{definition}

For every $n\geq0$ and  every $k\in \{0,\ldots,n\}$, we also denote the critical Morse tile $T^{n,k-1}_k$ of index $k$ by $C_k^n$ and observe that $T^{n,n-1}_k= T^{n}_{k+1}$, while $T^{-1}_0 = \emptyset$ by convention. In the case $k=0$, $C_0^n=\Delta_n$
is the standard  $n$-simplex while $C^n_n$ is the standard open $n$-simplex. 

The next lemma computes the contribution of each tile to the Euler characteristic of a tiled simplicial complex. Recall that the Euler characteristic is additive and may be computed with respect to the cellular structure
of the simplicial complex, given by open simplices.

\begin{lemma}\label{Lem_chitile}
For every $0\leq k\leq n,$ $\chi(C_k^n)=(-1)^k$. Likewise, for every $0\leq k\leq l \leq n-1,$ $\chi(T^{n}_k \setminus T^{l}_k)=0$.
\end{lemma}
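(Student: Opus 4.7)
The plan is to apply additivity of the (compactly supported) Euler characteristic to the cellular decomposition of each tile into the open simplices of $\Delta_n$ it contains, using only the elementary values $\chi(\sD_d) = (-1)^d$ and $\chi(\Delta_d) = 1$. I would first compute $\chi(T^n_k)$ for each basic tile. Label the vertices of $\Delta_n$ as $v_0, \ldots, v_n$ so that $\sigma_i$ is the facet opposite to $v_{i-1}$ for $i = 1, \ldots, k$. An open face $F$ of $\Delta_n$ then lies in $T^n_k = \Delta_n \setminus (\sigma_1 \cup \ldots \cup \sigma_k)$ iff it is contained in no $\sigma_i$, equivalently iff $\{v_0, \ldots, v_{k-1}\} \subseteq F$. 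For $k \geq 1$ such faces correspond bijectively to subsets $S \subseteq \{v_k, \ldots, v_n\}$ via $F = \{v_0, \ldots, v_{k-1}\} \cup S$, with $\dim F = k - 1 + |S|$, so additivity gives
\[
\chi(T^n_k) = \sum_{s=0}^{n-k+1} (-1)^{k-1+s} \binom{n-k+1}{s} = (-1)^{k-1}(1-1)^{n-k+1} = 0
\]
whenever $1 \leq k \leq n$, while $\chi(T^n_0) = \chi(\Delta_n) = 1$.

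The second assertion of the lemma then follows at once: by Proposition \ref{Prop_UniqueTile}, $T^l_k$ sits inside $T^n_k$ as the trace of a specific $l$-face of $\Delta_n$, so additivity yields $\chi(T^n_k \setminus T^l_k) = \chi(T^n_k) - \chi(T^l_k)$, which equals $0 - 0 = 0$ when $k \geq 1$ and $1 - 1 = 0$ when $k = 0$.

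For the first assertion I would observe that $C^n_k = T^n_k \setminus \tau$, where $\tau$ denotes the $(k-1)$-dimensional open face spanned by $v_0, \ldots, v_{k-1}$: every proper subset of $\{v_0, \ldots, v_{k-1}\}$ omits some $v_j$ and hence lies in $\sigma_{j+1}$, so among the subfaces of the closed simplex on $\{v_0, \ldots, v_{k-1}\}$ only the top open cell $\tau$ itself survives in $T^n_k$, and removing the closed simplex $\tau$ from $T^n_k$ amounts to removing this single open cell. Additivity then gives
\[
\chi(C^n_k) = \chi(T^n_k) - \chi(\tau) = 0 - (-1)^{k-1} = (-1)^k
\]
for $k \geq 1$, and the case $k = 0$ reduces to $C^n_0 = \Delta_n$ with $\chi = 1 = (-1)^0$.

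The argument is essentially bookkeeping; the only delicate point is correctly identifying which open cells of $\Delta_n$ make up each tile, in particular the observation that $C^n_k$ differs from $T^n_k$ by removing just the single open cell $\tau$ and no lower-dimensional cells.
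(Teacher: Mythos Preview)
Your proof is correct. The overall architecture is the same as the paper's: first show that $\chi(T^n_k)=0$ for $1\leq k\leq n$, then deduce both assertions by additivity from $C^n_k=T^n_k\setminus T^{k-1}_k$ and $T^{n,l}_k=T^n_k\setminus T^l_k$. The genuine difference lies in how you obtain $\chi(T^n_k)=0$. The paper argues inductively via Proposition~\ref{Prop_4.1}, writing $T^n_k=T^n_0\setminus(T^{n-1}_0\sqcup\ldots\sqcup T^{n-1}_{k-1})$ and peeling off one facet at a time, so that the vanishing for $T^n_k$ follows from the vanishing for the lower-dimensional tiles $T^{n-1}_1,\ldots,T^{n-1}_{k-1}$. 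You instead identify the open cells of $T^n_k$ directly as those faces containing $\{v_0,\ldots,v_{k-1}\}$ and collapse the resulting alternating sum with the binomial theorem. Your route is a little more self-contained, since it bypasses Proposition~\ref{Prop_4.1}; the paper's route has the virtue of reusing the structural decomposition that drives the rest of the section. Your remark that removing the closed $(k-1)$-face $\tau$ from $T^n_k$ removes only the single open cell $\stackrel{\circ}{\tau}$ is exactly the observation $T^{k-1}_k=\stackrel{\circ}{\Delta}_{k-1}$ that the paper uses implicitly when it writes $\chi(C^n_k)=\chi(T^n_k)-\chi(T^{k-1}_k)$.
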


\begin{proof}
If $k=0$, $T^n_k=C^n_0$ is the standard simplex  $\Delta_n$, so that  $\chi(T^n_k)=1$. If $k=n+1$, $T^n_k=C^n_n$ is the open simplex, so that  $\chi(T^n_k)=(-1)^n$. 

For every $n\geq 1,$  $T_1^n=T_0^n\setminus T_0^{n-1}$  has vanishing Euler characteristic. By Proposition \ref{Prop_4.1}, for every $n\geq 2$, $T^n_2=T^n_0\setminus (T_0^{n-1}\sqcup T_1^{n-1})$
 satisfies $\chi(T^n_2)=\chi(T^n_0)-\chi(T^{n-1}_0)-\chi(T^{n-1}_1)=0.$ Then, by induction on $k$, for every $k\geq 1$ and every $n\geq k,$ $T^n_k=T_0^n\setminus (T_0^{n-1}\sqcup\ldots\sqcup T_{k-1}^{n-1})$ has vanishing Euler characteristic so that any basic tile has vanishing Euler characteristic unless it is isomorphic to an open  or a closed simplex. Then, for every $0<k<n$, $\chi(C^{n}_k)=\chi(T^{n}_k)-\chi(T^{k-1}_k)=0-(-1)^{k-1}=(-1)^k$ by definition and  the additivity of the Euler characteristic. Likewise for every $0\leq k\leq l\leq n-1$,
 $\chi(T^{n,l}_n)=\chi(T^{n}_k)-\chi(T^{l}_k)=0.$
\end{proof}

\begin{proposition}\label{Prop_JSkeletonM}
For every $0\leq k\leq j\leq n$, the $j$-skeleton of $C^n_k$ admits a partition by basic tiles isomorphic to $T_l^j$ with $l>k$ and a unique critical Morse tile isomorphic to $C^j_k$. This skeleton is empty if $j<k$. Likewise, for every $0\leq k\leq l< n-1$, the $j$-skeleton of $T_k^{n,l}$ is empty if $j<k$, admits a partition by basic tiles isomorphic to $T^j_m$ with $m>k$ if $k\leq j\leq l$ together with a unique tile isomorphic to $T_k^{j,l}$ if $l<j\leq n$.
\end{proposition}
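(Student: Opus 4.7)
Since $C^n_k = T^{n,k-1}_k$, I would unify both parts of the proposition and argue by induction on the dimension $n$, the key idea being to inherit the cone decomposition of basic tiles (Proposition~\ref{Prop_4.1}) via an appropriate choice of facet ordering. Concretely, label the facets $\sigma_1,\ldots,\sigma_{n+1}$ of $\Delta_n$ so that $\sigma_1,\ldots,\sigma_k$ are the removed facets and $\sigma_{k+1}$ is the facet opposite a vertex $v_{k+1}$ that does not belong to $\tau$. Such a vertex exists whenever $l+1\leq n$, which covers every case except $C^k_k=\stackrel{\circ}{\Delta}_k$, treated directly as a base case whose $j$-skeleton is empty for $j<k$ and equals itself for $j=k$.

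With this labelling one has $\tau\subset\sigma_{k+1}$, so Proposition~\ref{Prop_4.1} gives
\[
T^n_k = T^{n-1}_k \sqcup T^{n-1}_{k+1} \sqcup \cdots \sqcup T^{n-1}_n \sqcup T^n_{n+1}
\]
in which the piece $T^{n-1}_k$ equals $\sigma_{k+1}\cap T^n_k$ and thus absorbs the whole intersection $\tau\cap T^n_k$. Viewing $\sigma_{k+1}$ as $\Delta_{n-1}$, the face $\tau$ is an $l$-face not contained in any of the $k$ already-removed facets of $\sigma_{k+1}$, so $T^{n-1}_k\setminus\tau=T^{n-1,l}_k$ and hence
\[
T^{n,l}_k = T^{n-1,l}_k \sqcup T^{n-1}_{k+1} \sqcup \cdots \sqcup T^{n-1}_n \sqcup T^n_{n+1}.
\]
For $j<n$, the $j$-skeleton of $T^{n,l}_k$ is the disjoint union of the $j$-skeleta of these $(n-1)$-dimensional boundary pieces, since the open interior $T^n_{n+1}$ contributes nothing below dimension $n$. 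The induction hypothesis applied to $T^{n-1,l}_k$ and Proposition~\ref{Prop_UniqueTile} applied to each basic tile $T^{n-1}_{k+1},\ldots,T^{n-1}_n$ then assemble the claimed partition of $(T^{n,l}_k)^{(j)}$: the unique distinguished tile ($C^j_k$ in the critical case $l=k-1$, or $T^{j,l}_k$ otherwise) arises from the Morse tile piece, and every other tile is a basic $T^j_m$ with $m>k$. The case $j=n$ is immediate since the tile equals its own $n$-skeleton; the emptiness for $j<k$ follows from $(T^n_k)^{(j)}=\emptyset$ when $j<k-1$ (by Proposition~\ref{Prop_UniqueTile}) combined with the observation that $(T^n_k)^{(k-1)}$ is the open $(k-1)$-simplex spanned by $v_1,\ldots,v_k$, which is entirely contained in $\tau$ and thus removed.

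The main technical subtlety I anticipate is the boundary case $l=n-2$: here $T^{n-1,l}_k$ falls outside the scope of the inductive hypothesis (since $l=(n-1)-1$) and degenerates to the basic tile $T^{n-1}_{k+1}$. In this subcase Proposition~\ref{Prop_UniqueTile} is applied directly to this basic piece, and the unique $T^{n-1}_{k+1}$ tile in its partition plays the role of the distinguished $T^{n-1,n-2}_k$ tile in the partition of $(T^{n,n-2}_k)^{(n-1)}$.
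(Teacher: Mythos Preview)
Your proof is correct, but it proceeds by a genuinely different route from the paper's. The paper argues directly, without induction on $n$: since $C^n_k = T^n_k \setminus T^{k-1}_k$ (and likewise $T^{n,l}_k = T^n_k \setminus T^l_k$), one first invokes Proposition~\ref{Prop_UniqueTile} to obtain the partition of the $j$-skeleton of the ambient basic tile $T^n_k$ into basic tiles $T^j_m$ with $m \geq k$ and a single tile of order exactly $k$. The removed face $T^{k-1}_k$ (resp.\ $T^l_k$) lies entirely inside this unique order-$k$ tile, so removing it converts that tile into $C^j_k$ (resp.\ $T^{j,l}_k$) and leaves all others untouched. This is essentially a two-line argument once Proposition~\ref{Prop_UniqueTile} is available.

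Your approach instead confines $\tau$ to a single facet $\sigma_{k+1}$ by a judicious choice of apex, decomposes $T^{n,l}_k$ via Proposition~\ref{Prop_4.1} into one lower-dimensional Morse tile $T^{n-1,l}_k$ plus basic tiles, and recurses. This is a valid alternative with two pleasant features: it unifies the critical and regular cases under the single notation $T^{n,l}_k$ with $l \geq k-1$, and it yields along the way the decomposition $T^{n,l}_k = T^{n-1,l}_k \sqcup T^{n-1}_{k+1} \sqcup \cdots \sqcup T^{n-1}_n \sqcup T^n_{n+1}$, which is the Morse-tile analogue of Proposition~\ref{Prop_4.1} (compare Proposition~\ref{Prop_ConeMorse}). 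The cost is the need to track the boundary case $l=n-2$ explicitly and to manage an induction, whereas the paper offloads all the inductive work to Proposition~\ref{Prop_UniqueTile} and treats the Morse-tile case as a simple subtraction.
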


\begin{proof}
By definition, $C^n_k=T^n_k\setminus T_k^{k-1}$ and by Proposition \ref{Prop_UniqueTile}, the $j$-skeleton of $T^n_k$ is empty if $j<k-1$ and admits a partition by basic tiles isomorphic to  $T_l^j$, $l\geq k$, with a unique tile of order $k$. The latter contains the unique $(k-1)$-dimensional tile of order $k$. The $j$-skeleton of $C^n_k$ thus inherits a partition by tiles isomorphic to  $T^j_l$ with $l>k$ and a tile isomorphic to $T^j_k\setminus T^{k-1}_k=C^j_k.$ In particular,  it is also empty if $j=k-1$ by Proposition \ref{Prop_UniqueTile}. Likewise, $T_k^{n,l}=T^n_k\setminus T_k^l$ and by Proposition \ref{Prop_UniqueTile}, the $l$-skeleton of $T^n_k$ admits a partition by basic tiles isomorphic to $T_m^l$ with $m\geq k$, the tile of order $k$ being unique. The  $l$-skeleton of $T^{n,l}_k$  thus inherits a partition by basic tiles of order $m> k$. It then follows from Proposition \ref{Prop_UniqueTile} that the same holds true for the $j$-skeleton of $T^{n,l}_k$ with $j\leq l$, these skeletons being empty  if $j<k.$ Finally, if $j>l$, we deduce from Proposition \ref{Prop_UniqueTile} that the $j$-skeleton of 
$T^{n,l}_k$  admits a partition by tiles isomorphic to $T^j_m$ with $m>k$ and a tile isomorphic to $T^j_k\setminus T^{l}_k=T_k^{j,l}$. Hence the result.
\end{proof}

 \begin{proposition}\label{Prop_ConeMorse}
For every $0<k<n$, $(c\ast C^n_k)\setminus \{c\}=T^{n+1,k}_k$. Moreover,  this cone deprived of its base $C^n_k$ is $C^{n+1}_{k+1}$. Similarly, for every $k\leq l \leq n-1,$ $(c\ast T^{n,l}_k)\setminus \{c\}= T^{n+1, l+1}_{k}$ and if this cone is deprived of its base $T^{n,l}_k,$ it is $T^{n+1, l+1}_{k+1}$.
\end{proposition}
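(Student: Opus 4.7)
The plan is to model the argument on the proof of Proposition~\ref{Prop_4.1} and treat both statements uniformly, since $C^n_k = T^{n,k-1}_k$. I set $\Delta_{n+1} = c \ast \Delta_n$, so that the facets of $\Delta_{n+1}$ are the $n$-simplices $\theta_i = c\ast \sigma_i$ for $i\in\{1,\ldots,n+1\}$ together with the ``base'' facet $\theta = \Delta_n$ opposite to $c$. A central observation will be that if $\tau$ is the $l$-face of $\Delta_n$ used to define $T^{n,l}_k$, then $c\ast \tau$ is an $(l+1)$-face of $\Delta_{n+1}$.

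The first technical step is to verify the set-theoretic identity
\[
c\ast (\Delta_n\setminus X)\setminus\{c\} = (c\ast \Delta_n)\setminus (c\ast X),
\]
valid for any nonempty $X\subseteq \Delta_n$. This follows at once from the unique representation of each point of $\Delta_{n+1}\setminus\{c\}$ as $(1-t)c+ty$ with $t\in (0,1]$ and $y\in \Delta_n$, together with the distributivity of $c\,\ast\,\cdot$ over unions. Applying this with $X=\sigma_1\cup\ldots\cup \sigma_k\cup\tau$ yields
\[
(c\ast T^{n,l}_k)\setminus\{c\} = \Delta_{n+1}\setminus\bigl(\theta_1\cup\ldots\cup\theta_k\cup (c\ast\tau)\bigr).
\]

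To identify the right-hand side as a Morse tile of the required type, I must check that $c\ast\tau$ is not contained in any of $\theta_1,\ldots,\theta_k$. Since $c\ast\tau\subseteq \theta_j = c\ast\sigma_j$ would force $\tau\subseteq \sigma_j$, and this is excluded by hypothesis on $\tau$, the inclusion fails; hence the right-hand side fits the definition of $T^{n+1,l+1}_k$. Specialising to the critical case $l=k-1$ gives $(c\ast C^n_k)\setminus\{c\} = T^{n+1,k}_k$, the first claim.

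For the ``moreover'' parts, I compute the intersection of $(c\ast T^{n,l}_k)\setminus\{c\}$ with the base $\theta=\Delta_n$. Since $c\notin\Delta_n$ one has $\theta_j\cap\Delta_n=\sigma_j$ and $(c\ast\tau)\cap\Delta_n=\tau$, so this intersection equals $\Delta_n\setminus(\sigma_1\cup\ldots\cup\sigma_k\cup\tau)=T^{n,l}_k$, i.e.\ the base really is $T^{n,l}_k$. Removing this base from the cone therefore amounts to adding $\theta=\Delta_n$ as a $(k+1)$-st facet to discard, producing
\[
\Delta_{n+1}\setminus\bigl(\theta_1\cup\ldots\cup\theta_k\cup\theta\cup(c\ast\tau)\bigr) = T^{n+1,l+1}_{k+1},
\]
which in the critical case becomes $C^{n+1}_{k+1}$. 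I do not expect a genuine obstacle: the whole argument is a bookkeeping exercise generalising Proposition~\ref{Prop_4.1}, and the only point requiring a little care is the set-difference manipulation establishing the distributive identity for the cone.
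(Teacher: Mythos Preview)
Your proof is correct and follows essentially the same route as the paper's. The paper's version is slightly more compressed because it invokes Proposition~\ref{Prop_4.1} as a black box, writing $(c\ast T^n_k)\setminus(c\ast T^l_k)=T^{n+1}_k\setminus T^{l+1}_k=T^{n+1,l+1}_k$ rather than unpacking the facets $\theta_i=c\ast\sigma_i$ and the face $c\ast\tau$ explicitly; but your distributive identity and the paper's step $(c\ast(T^n_k\setminus T^l_k))\setminus\{c\}=(c\ast T^n_k)\setminus(c\ast T^l_k)$ are the same computation, and the identification of the result is identical in substance.
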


\begin{proof}
By definition, $C_k^n=T^n_k\setminus T^{k-1}_k$, so that $(c\ast C^n_k\setminus \{c\})=(c\ast T^n_k)\setminus (c\ast T^{k-1}_k)=T^{n+1}_k\setminus T^k_k=T^{n+1,k}_k.$
If the cone is deprived of its base, it gets $T^{n+1}_{k+1}\setminus T^k_{k+1}=C^{n+1}_{k+1}$. Similarly,  $(c\ast T^{n,l}_k)\setminus \{c\}=(c\ast T^n_k)\setminus (c\ast T^l_k)=T^{n+1}_k\setminus T^{l+1}_k=T^{n+1, l+1}_k.$ And if the cone is deprived of its base $T^{n,n}_k$, we get $T^{n+1}_{k+1}\setminus T^{l+1}_{k+1}=T^{n+1, l+1}_{k+1}.$ 
\end{proof}

\subsection{Morse tilings}\label{Ssect_MorseTiling}

We now introduce Morse tilings of finite simplicial complexes, or more generally of Morse tileable sets. For a definition of simplicial complexes, see for instance \cite{M}. The relation between Morse tilings and discrete Morse theory is discussed in section \ref{Ssect_Morse}.

\begin{definition}\label{Defn_Morse tiling}
A subset $S$ of a finite simplicial complex $K$ is said to be Morse tileable iff it admits a partition by Morse tiles such that for every $j\geq 0$, the union of tiles of dimension greater than $j$ is the intersection with $S$ of a simplicial subcomplex of $K$.  It is Morse tiled iff such a partition, called a Morse  tiling, is given. The dimension of $S$ is then the maximal dimension of its tiles. 
\end{definition}

The dimension of a Morse tileable set does not depend on the tiling, for it is also the maximal dimension of the open simplices contained in $S$.
The trivial partition of a simplicial complex by open simplices is thus not a Morse tiling, though open simplices are Morse tiles. Recall that Proposition \ref{Prop_4.1} provides in particular 
a partition of the boundary $\partial \Delta_{n+1}$ which contains each basic tile $T^n_k$ exactly once, $k\in \{0,\ldots, n+1\}$. This is a Morse tiling of a triangulated sphere, even an $h$-tiling by Definition \ref{Defn_$h$tiling}, with one critical tile of index $0$ and one critical tile of index $n$.
Theorem \ref{Thm_intro2} provides in particular Morse tileable triangulations on every closed manifold of dimension less than four.

\begin{definition}\label{Defn_Morsesubset}
Let $S$ be a Morse tiled set. A subset $S'$ of $S$ is said to be a Morse tiled subset iff it is a union of Morse tiles and there exists a subcomplex $L$ of a finite simplicial complex $K$ such that
$S \subset K$ and $S' = S \cap L$.
\end{definition}

By definition thus, if $S$ is a Morse tiled set, then for every $j\geq 0$, the union of its tiles of dimension greater than $j$ is a Morse tiled subset of $S$.

\begin{corollary}\label{Cor_SkeletonM}
Let $S$ be a Morse  tileable set. Then, all its skeletons are Morse tileable. Moreover, given a Morse  tiling on $S$, there exist Morse tilings on its skeletons   $S^{(i)}, i\geq0$, such that every tile of $S^{(i)}$ is contained in a tile of $ S^{(i+1)}$. 
\end{corollary}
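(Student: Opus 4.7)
The plan is to construct, for every $i \geq 0$, a Morse tiling $\mathcal{T}^{(i)}$ of $S^{(i)}$ by subdividing each tile of the given Morse tiling $\mathcal{T}$ of $S$ along its $i$-skeleton using Proposition \ref{Prop_JSkeletonM}. Concretely, for each tile $T \in \mathcal{T}$ of dimension $n$, if $n \leq i$ I put $T$ itself into $\mathcal{T}^{(i)}$, while if $n > i$ I put into $\mathcal{T}^{(i)}$ the $i$-dimensional Morse tiles provided by the Proposition \ref{Prop_JSkeletonM} partition of the $i$-skeleton of $T$ (this partition being empty when $i$ is below the index of $T$).

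Next I would verify that $\mathcal{T}^{(i)}$ is a Morse tiling of $S^{(i)}$. The partition property is immediate, since any point of $S^{(i)}$ lies in a unique tile $T$ of $\mathcal{T}$ and, when $\dim T > i$, further in a unique piece of the Proposition \ref{Prop_JSkeletonM} partition of the $i$-skeleton of $T$. For the subcomplex condition, let $L_j \subset K$ be the subcomplex such that $\bigsqcup\{T \in \mathcal{T} : \dim T > j\} = S \cap L_j$. When $j \geq i$, the union of tiles of $\mathcal{T}^{(i)}$ of dimension greater than $j$ is empty, because the newly created pieces have dimension exactly $i$ and the unchanged tiles have dimension at most $i$, so the empty subcomplex suffices. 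When $j < i$, this union consists of the untouched tiles of $\mathcal{T}$ of dimension $m$ with $j < m \leq i$ together with all $i$-dimensional pieces generated from higher-dimensional tiles of $\mathcal{T}$, and together they fill exactly $(S \cap L_j) \cap K^{(i)} = S^{(i)} \cap L_j$.

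For the nesting, consider a tile $t$ of $\mathcal{T}^{(i)}$. If $t$ is a tile of $\mathcal{T}$ of dimension at most $i$, it also lies in $\mathcal{T}^{(i+1)}$ and trivially contains itself. Otherwise $t = \tau \cap T$ is the trace on some $i$-face $\tau$ of the ambient simplex $\Delta$ of a tile $T \in \mathcal{T}$ with $\dim T = n > i$, and its non-emptiness forces $\tau$ to meet no facet removed from $T$. Any $(i+1)$-face $\tau' \subset \Delta$ with $\tau \subset \tau'$ then also lies in no removed facet of $T$ (otherwise $\tau$ would), so $t' := \tau' \cap T$ is either a tile of $\mathcal{T}^{(i+1)}$ arising from the $(i+1)$-skeleton partition of $T$ (if $n > i+1$) or $T$ itself (if $n = i+1$); in either case $t \subset t'$.

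The main point to verify with care is the subcomplex condition, which hinges on the crucial feature of Proposition \ref{Prop_JSkeletonM} that the $i$-skeleton of every Morse tile admits a partition into pieces all of dimension exactly $i$. This is what lets the subcomplex bookkeeping of $\mathcal{T}^{(i)}$ be inherited directly from that of $\mathcal{T}$, and it also makes the nesting between consecutive skeletons a direct consequence of the nesting of faces of the ambient simplex.
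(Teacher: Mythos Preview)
Your partition and subcomplex verification are essentially the paper's argument, and they are correct: the key observation that Proposition~\ref{Prop_JSkeletonM} produces only $i$-dimensional pieces is exactly what makes the bookkeeping with the subcomplexes $L_j$ go through. The paper organizes this as a descending induction---build $\mathcal{T}^{(n-1)}$ from $\mathcal{T}$, then $\mathcal{T}^{(n-2)}$ from $\mathcal{T}^{(n-1)}$, and so on---rather than defining every $\mathcal{T}^{(i)}$ directly from $\mathcal{T}$, but this is only a difference in packaging.

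There is, however, a genuine error in your nesting argument. You assert that every piece $t$ of the Proposition~\ref{Prop_JSkeletonM} partition of the $i$-skeleton of $T$ is of the form $\tau \cap T$ for some $i$-face $\tau$ of the ambient simplex. This is false. By Proposition~\ref{Prop_UniqueTile}, only the \emph{unique} piece of minimal order $k$ is a trace $\tau \cap T$; the remaining pieces are obtained by further removing faces coming from the iterated application of Proposition~\ref{Prop_4.1}, not from $T$ itself. For example, in $T^3_1 = \Delta_3 \setminus \sigma_1$ the $2$-skeleton partition is $T^2_1 \sqcup T^2_2 \sqcup T^2_3$, and the last two pieces are strictly smaller than the trace of their supporting $2$-face with $T^3_1$. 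Consequently your argument that $\tau' \cap T$ is a tile of $\mathcal{T}^{(i+1)}$ containing $t$ breaks down. Moreover, if you invoke Proposition~\ref{Prop_JSkeletonM} independently for each $i$, nothing forces the resulting partitions to be nested: different orderings of facets in the iterated Proposition~\ref{Prop_4.1} can give incompatible partitions of the $i$- and $(i+1)$-skeletons of the same tile.

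The paper's inductive construction sidesteps this entirely: since $\mathcal{T}^{(i)}$ is built by partitioning the codimension-one skeleton of each tile of $\mathcal{T}^{(i+1)}$, every tile of $\mathcal{T}^{(i)}$ lies in a tile of $\mathcal{T}^{(i+1)}$ by construction. To repair your direct approach you would need to stipulate that the Proposition~\ref{Prop_JSkeletonM} partitions are chosen coherently across $i$ (equivalently, fix once and for all the orderings in Proposition~\ref{Prop_4.1}), and then argue nesting from that coherence rather than from the incorrect trace description.
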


\begin{proof}
By definition, $S$ is a subset of a finite simplicial complex $K$.
Let $n$ be the dimension of $S$ and let a Morse tiling be given.  By Proposition \ref{Prop_UniqueTile} and Proposition \ref{Prop_JSkeletonM}, the $(n-1)$-skeleton of every $n$-dimensional tile of $S$ admits a partition by Morse tiles. Then, the union of tiles of $S$ of dimension less than $n$ with the ones given by these partitions induces a partition of $S^{(n-1)}$  with tiles which are either tiles of $S^{(n)}=S$ or contained in such tiles. Moreover, by construction, for every $j\in\{0,\dots, n-1\}$, the union of tiles of dimension greater or equal to  $j$ in this partition is the intersection of $K^{(n-1)}$ with the union of tiles of dimension greater or equal to $j$ in $S$. 
Since the latter is by definition the intersection with $S$ of a subcomplex $L$ of $K$, the former is the intersection with $S$ of the complex of $L^{(n-1)}$, so that $S^{(n-1)}$ is Morse tileable. The result is then obtained by induction, replacing $S$ with $S^{(n-1)}$.
\end{proof}

We prove in subsection \ref{Ssect_TilingThm} that the first barycentric subdivision of a Morse tileable set is also Morse tileable, see Corollary \ref{Cor_BarSubM}, so that 
the class of Morse tileable sets is stable under barycentric subdivisions and skeletons, proving Theorem \ref{Thm_intro0}. We already introduced in \cite{SW3} a notion of tileable simplicial complexes sharing the same properties. Let us recall and slightly generalize this subclass of Morse tileable simplicial complexes.

\begin{definition} \label{Defn_$h$tiling}A subset $S$ of a finite simplicial complex $K$ is said to be $h$-tileable iff it admits a partition by basic tiles such that for every $j\geq 0$, the union of tiles of  dimension greater than $j$ is  the intersection with $S$ of a simplicial subcomplex of $K$. It is $h$-tiled    iff such a partition, called  an $h$-tiling, is given. 
\end{definition}

Definition \ref{Defn_$h$tiling} extends the definition given in \S~4.2  of \cite{SW3} where only simplicial complexes and basic tiles of the same dimension are admitted. Such a tiling is now called an $h$-tiling to avoid confusion with Morse tilings
and due to its close relation with $h$-vectors discussed in \S~4.2 of \cite{SW3}, see subsection \ref{SSect_Packing}.

\begin{corollary}\label{Cor_Skeletonh}
Let $S$ be an $h$-tileable set. Then, all its skeletons are $h$-tileable. Moreover, given an $h$-tiling on $S$, there exist $h$-tilings on its skeletons $S^{(i)}$  such that every tile of $S^{(i)}$ is  contained in  a tile of $S^{(i+1)}$.$\hfill \square$
\end{corollary}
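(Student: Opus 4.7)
The plan is to adapt the argument of Corollary \ref{Cor_SkeletonM} to the basic-tile setting. Every basic tile is in fact a Morse tile: indeed $T^n_0 = C^n_0$, $T^n_{n+1} = C^n_n$, and for $1 \leq k \leq n$ the basic tile $T^n_k$ is affinely isomorphic to $T^{n,n-1}_{k-1}$. An $h$-tiling is therefore a particular Morse tiling, and Corollary \ref{Cor_SkeletonM} already supplies Morse tilings on all skeletons $S^{(i)}$ with the desired nesting property. It thus suffices to verify that, when its input is basic, the construction of Corollary \ref{Cor_SkeletonM} outputs basic tiles.

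First I would let $S \subset K$ be $h$-tiled of dimension $n$ and apply Proposition \ref{Prop_4.1} to each $n$-dimensional basic tile $T = T^n_k$ of the tiling: its intersection with $\partial \Delta_n$ decomposes as the disjoint union $T^{n-1}_k \sqcup T^{n-1}_{k+1} \sqcup \ldots \sqcup T^{n-1}_n$ of basic $(n-1)$-tiles. Combining these pieces with the tiles of $S$ already of dimension strictly less than $n$ yields a candidate partition of $S^{(n-1)} = S \cap K^{(n-1)}$ by basic tiles. By construction, each new tile is contained in a tile of $S$: either it equals one, or it lies in the boundary of an $n$-dimensional tile of $S$.

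Then I would verify the filtration condition of Definition \ref{Defn_$h$tiling}. Writing $U_j \subset S$ for the union of the original tiles of dimension greater than $j$, the hypothesis provides a subcomplex $L_j \subset K$ with $U_j = S \cap L_j$. For $j < n-1$ the union of new tiles of dimension greater than $j$ equals $U_j \cap K^{(n-1)} = S^{(n-1)} \cap L_j^{(n-1)}$, the trace on $S^{(n-1)}$ of the subcomplex $L_j^{(n-1)}$ of $K^{(n-1)}$; for $j \geq n-1$ this union is empty. Iterating the construction, applying the same step successively to $S^{(n-1)}, S^{(n-2)}, \ldots$, then produces $h$-tilings on every skeleton with the claimed nesting property.

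I do not foresee any real obstacle here. The whole argument rests on the single fact, recorded in Proposition \ref{Prop_4.1}, that the boundary of a basic tile is partitioned into basic tiles of one dimension lower. In particular, no appeal to the Morse-specific Proposition \ref{Prop_JSkeletonM} is needed, which is why the $h$-tileable case is strictly easier than the Morse tileable case already treated.
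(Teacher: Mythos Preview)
Your proposal is correct and follows exactly the approach the paper indicates: the authors state that the proof is similar to that of Corollary~\ref{Cor_SkeletonM} and do not repeat it, and your argument is precisely that adaptation, using Proposition~\ref{Prop_4.1} alone (in place of Proposition~\ref{Prop_JSkeletonM}) to partition the codimension-one skeleton of each top-dimensional basic tile into basic tiles. The initial remark that basic tiles are Morse tiles is correct but ultimately unused, since you then carry out the construction directly with Proposition~\ref{Prop_4.1}; the essential content is the same as the paper's.
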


\begin{corollary}\label{Cor_BarSubh}
Let $S$ be an $h$-tileable set, then so is its first  barycentric subdivision $\Sd(S)$.  
$\hfill \square$
\end{corollary}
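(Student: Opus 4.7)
The plan is to deduce the statement from the Morse analog Corollary~\ref{Cor_BarSubM}, by verifying that the subdivision construction used there preserves the property of being basic. Every basic tile is in particular a Morse tile (with $l = n-1$ if $k \geq 1$, and with the convention $\tau = \emptyset$, $l = -1$, if $k = 0$), so every $h$-tiling is automatically a Morse tiling. Corollary~\ref{Cor_BarSubM} therefore produces a Morse tiling on $\Sd(S)$, and it suffices to check that when the input Morse tiling consists only of basic tiles, so does the output Morse tiling of $\Sd(S)$.

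The subdivision construction in the proof of Corollary~\ref{Cor_BarSubM} proceeds tile by tile: for each Morse tile $T$ of $\tT$ it uses Theorem~\ref{Thm_Sd} to partition $\Sd(T)$ into Morse tiles, and then it glues the local results along shared boundaries. The essential local claim I need is the following: when the input Morse tile is a basic tile $T^n_k$, the partition of $\Sd(T^n_k)$ produced by Theorem~\ref{Thm_Sd} consists only of basic tiles. This is essentially the content of Theorem~1.9 of \cite{SW3} in the top-dimensional setting; its extension to the broader Definition~\ref{Defn_$h$tiling} (allowing basic tiles of various dimensions in the same $h$-tiling) follows by induction on $n$, using the cone description of Proposition~\ref{Prop_4.1} and the fact that barycentric subdivision commutes with joining with a vertex.

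Once the local claim is established, the gluing performed in Corollary~\ref{Cor_BarSubM} yields a partition of $\Sd(S)$ by basic tiles satisfying the filtration condition inherited from Corollary~\ref{Cor_BarSubM}, which is exactly the filtration condition of Definition~\ref{Defn_$h$tiling}. The main obstacle is the inductive verification of the local basic-preservation claim; the requisite combinatorics is already present in \cite{SW3}, and the passage to mixed-dimensional $h$-tilings introduces no new difficulty because the gluing respects the dimension stratification of $\tT$ supplied by the $h$-tiling hypothesis.
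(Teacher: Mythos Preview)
Your proposal is correct and follows essentially the same route the paper indicates: the paper does not spell out a proof but remarks that it is parallel to that of Corollary~\ref{Cor_BarSubM}, relying on the basic-tile case of Theorem~\ref{Thm_Sd} already established in \cite{SW3}. Your observation that the construction in Theorem~\ref{Thm_Sd}, when fed a basic tile $T^n_k$, outputs only basic tiles (this is exactly what the first half of that proof shows, via cones and Proposition~\ref{Prop_4.1}) is the key point, and the gluing and filtration verifications are indeed identical to those in Corollary~\ref{Cor_BarSubM}.
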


The proofs of Corollary \ref{Cor_Skeletonh} and Corollary \ref{Cor_BarSubh}   have already been given in \cite{SW3} in the case only  simplicial complexes and tiles of the same dimension are involved. Since they are similar to the ones of Corollary \ref{Cor_SkeletonM} and Corollary \ref{Cor_BarSubM}, we do not repeat them. 

 \subsection{Morse shellability}\label{SSect_Shelling}
 
 We now introduce another subclass of Morse tileable sets, the Morse shellable ones, which plays a role in section \ref{Ssect_Morse} and  contains shellable simplicial complexes.
  
 \begin{definition}\label{Morseshellable}
 A subset $S$ of a finite simplicial complex $K$ is said to be Morse shellable (resp. shellable) iff there exists a Morse tiling on $S$ and 
 a filtration $\emptyset =S_0\subset S_1\subset\ldots\subset S_N=S$ by Morse tiled subsets of $S$ such that for every $i\in \{1,\dots, N\},$ $S_i\setminus S_{i-1}$ is a single Morse tile (resp. basic tile). 
 \end{definition}
 
 A finite simplicial complex $K$ is classically said to be shellable iff there exists an ordering $\sigma_1 , \dots , \sigma_N$ of its maximal simplices such that for every $i \in \{ 2, \dots , N \}$, 
 $\sigma_i \cap (\cup_{j=1}^{i-1} \sigma_j)$ is non-empty and of pure dimension $\dim \sigma_i -1$, see Definition $12.1$ of \cite{Koz} for instance. This means that each simplex $\sigma_1 , \dots , \sigma_N$ is not a proper
 face of a simplex in $K$ and  every simplex in $\sigma_i \cap (\cup_{j=1}^{i-1} \sigma_j)$, $i \in \{ 2, \dots , N \}$, is a face of a  $(\dim \sigma_i -1)$-dimensional one in this intersection. Definition \ref{Morseshellable} extends this classical notion of shellability. Indeed. 
 
\begin{theorem}\label{Lemmashel}
A finite simplicial complex is shellable in the sense of Definition \ref{Morseshellable} iff  there exists an ordering $\sigma_1,\ldots, \sigma_N$ of its maximal simplices  such that for every $i\in\{2,\ldots, N\}$, $\sigma_i\cap(\cup_{j=1}^{i-1}\sigma_j)$ is either empty or of pure dimension $\dim \sigma_i -1$. 
\end{theorem}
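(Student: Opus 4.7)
The plan is to prove both implications. A key observation used throughout is that, if $\sigma$ is a maximal simplex of $K$ of dimension $n$, then in any Morse tiling of $K$ the open simplex of $\sigma$ is contained in a unique basic tile, which must be $n$-dimensional and contained in $\sigma$ (since any higher-dimensional host would be a simplex properly containing $\sigma$, contradicting maximality); I call this the top tile of $\sigma$.

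For ($\Leftarrow$), given the ordering, I set $\tau_i := \sigma_i \cap \bigcup_{j<i}\sigma_j$ and $T_i := \sigma_i \setminus \tau_i$. Since $\tau_i$ is either empty or a union of $k_i$ facets of $\sigma_i$, each $T_i$ is affinely isomorphic to the basic tile $T^{\dim \sigma_i}_{k_i}$. Setting $i^*(x) := \min\{i : x\in\sigma_i\}$, every point of $K$ belongs to $T_{i^*(x)}$ and to no other tile, so $\{T_i\}$ partitions $K$. The filtration $S_i := \bigcup_{j\leq i}\sigma_j$ is a sequence of subcomplexes (hence Morse tiled subsets) with $S_i\setminus S_{i-1} = T_i$. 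It remains to verify the Morse tiling property, namely that $U_j := \bigcup_{\dim\sigma_i > j} T_i$ is a subcomplex for every $j$. The key claim, proved by strong induction on $l$, is that $\dim \sigma_{i^*(x)} \geq \dim \sigma_l$ for every $x \in \sigma_l$. If $i^*(x) < l$, then $x \in \tau_l$ and pureness forces $x$ to lie in a facet $F$ of $\sigma_l$ with $F \subset \tau_l \subset \bigcup_{l'<l}\sigma_{l'}$; since each point in the interior of $F$ has carrier $F$, one deduces $F \subset \sigma_{l'}$ for some $l'<l$, and the maximality of $\sigma_{l'}$ yields $F \subsetneq \sigma_{l'}$, hence $\dim\sigma_{l'}\geq\dim\sigma_l$, after which the inductive hypothesis concludes. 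The claim gives $U_j = \bigcup_{\dim\sigma_i > j}\sigma_i$, visibly a subcomplex.

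For ($\Rightarrow$), I order the maximal simplices $\sigma_1, \ldots, \sigma_N$ by the steps $t_1 < \ldots < t_N$ at which their top tiles appear in the shelling. Each $S_{t_i - 1}$ is a subcomplex, and since the top tile of $\sigma_i$ equals $\sigma_i \setminus (\sigma_i \cap S_{t_i-1})$ and is a basic tile, $\sigma_i \cap S_{t_i - 1}$ is a union of $k_i$ facets of $\sigma_i$, either empty or of pure dimension $\dim \sigma_i - 1$. Since $\bigcup_{j<i}\sigma_j \subset S_{t_i-1}$, it suffices to show that every facet $F$ of $\sigma_i$ contained in $S_{t_i - 1}$ lies in some $\sigma_j$ with $j<i$. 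Let $T'$ be the tile containing the interior of $F$, added at some step $s < t_i$ and hosted by a simplex $\rho$ of $K$, so that $\dim T' = \dim \rho$ and $F \subset \rho$. The Morse tiling property at parameter $\dim \sigma_i - 1$ forces $\dim T' \geq \dim \sigma_i$, since the union of tiles of dimension $\geq \dim \sigma_i$ is a subcomplex containing $\sigma_i$, hence $F$ and its interior; so $\rho \supsetneq F$. If $\rho$ were a proper face of a larger maximal simplex $\sigma_m$ with $\dim \sigma_m > \dim \rho$, applying the Morse tiling property at parameter $\dim \rho$ would give a subcomplex containing $\sigma_m$, hence $\rho$, hence its interior, contradicting the fact that the interior of $\rho$ lies in $T'$ of dimension only $\dim \rho$. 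Hence $\rho$ itself is maximal, $T'$ is the top tile of some $\sigma_m$ with $m < i$, and $F \subset \rho = \sigma_m$. The main obstacle is this delicate use of the Morse tiling condition at two different values of the filtration parameter, in order to exclude the possibility that a tile of intermediate dimension be hosted in a proper face of a larger maximal simplex.
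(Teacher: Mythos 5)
Your proposal is correct and follows essentially the same route as the paper: in the ($\Leftarrow$) direction you build the same tiles $T_i=\sigma_i\setminus\bigcup_{j<i}\sigma_j$ and verify the subcomplex condition by the same underlying observation (a facet lying in earlier simplices is a \emph{proper} face of an earlier maximal simplex, hence of dimension reasons the unions of high-dimensional tiles close up), merely packaged as a pointwise monotonicity claim instead of the paper's induction along the shelling. In the ($\Rightarrow$) direction your ``top tile'' bookkeeping and the exclusion of tiles hosted in non-maximal simplices via the Morse tiling condition is exactly the paper's argument that the closure of each tile cannot be a proper face, so the two proofs coincide in substance.
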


 \begin{proof}
 Let $K$ be a finite simplicial complex which is shellable in the sense of Definition \ref{Morseshellable}. There exists then a filtration $\emptyset = K_0  \subset K_1\subset\ldots\subset K_N=K$ of $K$ by finite simplicial complexes together with an $h$-tiling on $K$ such that for every $i\in \{1,\dots, N\},$ $K_i\setminus K_{i-1}$ is a single basic tile $T_i$. Let $\sigma_i$ be the closure of $T_i$ in $K$. Then, $\sigma_i \cap (\cup_{j=1}^{i-1} \sigma_j)$ is either empty or of pure dimension $\dim \sigma_i -1$ by Definition \ref{Defn_Basic}. Moreover, $\sigma_i$ cannot be a proper face of a simplex in $K$ by Definition \ref{Defn_$h$tiling}, since otherwise the union of
 tiles of dimensions greater than $\dim \sigma_i$ would not be a simplicial subcomplex of $K$.
 
 Conversely, let us now assume that there exists an ordering $\sigma_1 , \dots , \sigma_N$ of the maximal simplices of a simplicial complex $K$ such that for every $i \in \{ 2, \dots , N \}$, 
 $\sigma_i \cap (\cup_{j=1}^{i-1} \sigma_j)$ is either empty or of pure dimension $\dim \sigma_i -1$. This means that $\sigma_i \cap (\cup_{j=1}^{i-1} \sigma_j)$ is a union of facets of  $\sigma_i$ so that
 $T_i =  \sigma_i \setminus (\cup_{j=1}^{i-1} \sigma_j)$ is a basic tile. These tiles provide a partition of $K$ and we have to prove that for every $j\geq 0$, the union of tiles of dimension greater than $j$ is a 
 simplicial subcomplex of $K$. We proceed by induction on $i \in \{1, \dots , N \}$. For $i=1$, there is nothing to prove. Assume now that the union $\sqcup_{j=1}^{i-1} T_j$ is an $h$-tiling of $K_{i-1}$. Then, each
 open facet of $\sigma_i $ has to be contained in a tile of dimension at least $\dim \sigma_i $ in $K_{i-1}$, since otherwise it would be the interior of one of the tiles $T_j$, $j<i$, and $\sigma_j$ would be a proper face of $\sigma_i $, a contradiction. Since by induction the union of tiles of dimensions at least  $\dim \sigma_i$ in $K_{i-1}$ is a simplicial subcomplex of $K_{i-1}$, the same holds true then for the union of tiles of dimensions at least  $\dim \sigma_i$ in $K_{i}$, which is a simplicial subcomplex of $K_{i}$. The result follows. 
  \end{proof}

  \begin{remark}\label{Rem_rem0}
The proof of Lemma \ref{Lemmashel} actually gives more, namely that any shelling of a simplicial complex  
$\emptyset=K_0\subset K_1\subset \ldots\subset K_N=K$ in the sense of Definition \ref{Morseshellable} provides a shelling $\sigma_1,\ldots,\sigma_N$ in the classical sense and vice-versa, where for every $i\in\{1,\ldots,N\}$, $\sigma_i$ is the closure of the tile $K_i\setminus K_{i-1}$ and where however the non-emptiness condition in the classical definition of shelling has been removed. The latter allows for the underlying topological space $|K|$ not to be connected, but it also
allows for slightly more shellings, since an open one-dimensional simplex may connect two different shellings as in Figure \ref{Fig_NonCl}. These are the only new kinds of shellings though, since for all other basic tiles, the union of missing facets is connected.
  \end{remark}
  
   \begin{figure}[h]
   \begin{center}
    \includegraphics[scale=0.5]{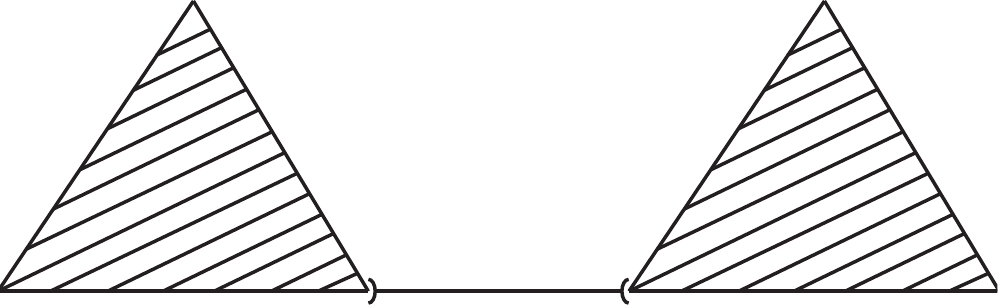}
    \caption{A non classical shelling.}
    \label{Fig_NonCl}
      \end{center}
 \end{figure}

  We may now revisit Proposition \ref{Prop_UniqueTile} and Proposition \ref{Prop_JSkeletonM}.
  
  \begin{lemma}\label{Lem_Co1Skel} 
  The codimension one skeletons of Morse tiles are Morse shellable.
  \end{lemma}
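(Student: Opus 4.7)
The plan is to realize the codimension one skeleton $S$ of each Morse tile $T$ as an explicit ascending union of Morse tiled subsets, adding the trace of one facet of the ambient simplex at each step. I first fix a standard model in $\Delta_n$ with facets $\sigma_1,\ldots,\sigma_{n+1}$, so that $T=T^n_k=\Delta_n\setminus(\sigma_1\cup\ldots\cup\sigma_k)$ in the basic case and $T=T^{n,l}_k=T^n_k\setminus\tau$ in the non-basic case, where $\tau$ is an $l$-face containing the vertex opposite $\sigma_i$ for each $i\in\{1,\ldots,k\}$. Up to relabeling, one may arrange that $\tau\subset\sigma_{n+1}$: the $l+1-k$ free vertices of $\tau$ can be picked among those opposite $\sigma_{k+1},\ldots,\sigma_n$ since $l\leq n-1$. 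If $T$ is an open $n$-simplex (that is $T^n_{n+1}$ or $C^n_n$) then $S=\emptyset$ by Proposition~\ref{Prop_JSkeletonM} and the result is trivial, so I assume otherwise.

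I would then consider the filtration $\emptyset=S_0\subset S_1\subset\ldots\subset S_N=S$ with $N=n-k+1$ and $S_j=T\cap L_j$, where
\[
L_j=\sigma_{n+1}\cup\sigma_{k+1}\cup\sigma_{k+2}\cup\ldots\cup\sigma_{k+j-1}
\]
is a simplicial subcomplex of $\Delta_n$ contained in $\partial\Delta_n$; hence each $S_j=S\cap L_j$ is a Morse tiled subset of $S=T\cap\partial\Delta_n$ in the sense of Definition~\ref{Defn_Morsesubset}. The first step yields $S_1=T\cap\sigma_{n+1}$, which inside $\sigma_{n+1}\cong\Delta_{n-1}$ is the trace of $\Delta_{n-1}$ after removing the $k$ facets $\sigma_{n+1}\cap\sigma_i$ and, in the non-basic case, the $l$-face $\tau$; this is the basic tile $T^{n-1}_k$ or the Morse tile $T^{n-1,l}_k$, which specializes to $C^{n-1}_k$ when $l=k-1$.

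For $j\geq 2$ a direct computation gives
\[
S_j\setminus S_{j-1}=\sigma_{k+j-1}\setminus\bigl(\sigma_1\cup\ldots\cup\sigma_{k+j-2}\cup\sigma_{n+1}\cup\tau\bigr),
\]
and the key point is that $\tau\subset\sigma_{n+1}$ makes the last subtraction redundant; the increment is therefore $\sigma_{k+j-1}$ deprived of $k+j-1$ of its own facets, that is, the basic tile $T^{n-1}_{k+j-1}$. Since the facets $\sigma_1,\ldots,\sigma_k$ are already disjoint from $T$, one has $S_N=T\cap\partial\Delta_n=S$, so the filtration is a Morse shelling of $S$ in the sense of Definition~\ref{Morseshellable}, with tiles coinciding with those furnished by Proposition~\ref{Prop_JSkeletonM}. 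The only mild obstacle is arranging $\tau\subset\sigma_{n+1}$ at the start; once done, the absorption of $\tau$ in $\sigma_{n+1}$ from the second step onward reduces the non-basic case uniformly to the basic one, and the argument proceeds without further case analysis.
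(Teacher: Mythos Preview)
Your proof is correct and follows essentially the same approach as the paper: both produce the shelling $T^{n-1,l}_k\sqcup T^{n-1}_{k+1}\sqcup\ldots\sqcup T^{n-1}_n$ of the $(n-1)$-skeleton (with the first tile equal to $T^{n-1}_k$ in the basic case), ordered by increasing order. The paper simply invokes Proposition~\ref{Prop_4.1} and states this shelling directly, while you spell out the filtration $S_j=T\cap L_j$ and verify explicitly that each increment is a single Morse tile, using the placement $\tau\subset\sigma_{n+1}$ to make the non-basic case reduce to the basic one from the second step on; the content is the same.
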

  
  \begin{proof} 
  The $(n-1)$-skeleton of a closed $n$-simplex is well known to be shellable. A shelling is given by Proposition \ref{Prop_4.1},  where the order of the simplices of $\partial \Delta_n=T^{n-1}_0\sqcup\ldots\sqcup T^{n-1}_n$ is given by the order of the tiles in the sense of Definition \ref{Defn_Basic}.  If the Morse tile is basic, isomorphic to  $T^n_k$, then by Proposition \ref{Prop_4.1}, the $(n-1)$-skeleton  of $T^n_k$ is shelled by $T^{n-1}_k\sqcup\ldots\sqcup T^{n-1}_n$, again ordering the tiles by increasing order in the sense of Definition \ref{Defn_Basic}. Finally, if the Morse tile is not basic, isomorphic to $T^{n,l}_k=T^n_k\setminus T^l_k$ with $k-1\leq l<n-1,$ the $(n-1)$-skeleton of $T^{n,l}_k$ is likewise shelled by $T^{n-1,l}_k\sqcup T^{n-1}_{k+1}\sqcup\ldots\sqcup T^{n-1}_n$. 
  \end{proof}
 
 \begin{theorem}\label{Thm_SkelBarSubShell} 
 Let $S$ be a Morse shellable set. Then, all its skeletons are Morse shellable. Moreover, given a Morse shelling on $S$, there exist Morse shellings on its skeletons $S^{(i)}, i\geq0$, such that every tile of $S^{(i)}$ is contained in a tile of $ S^{(i+1)}$. 
 \end{theorem}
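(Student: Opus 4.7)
The plan is to induct on $n = \dim S$. The case $n = 0$ is immediate, since $S$ is a disjoint union of vertices, each a Morse tile. Assume $n \geq 1$ and fix a Morse shelling $\emptyset = S_0 \subset S_1 \subset \cdots \subset S_N = S$ with $S_i = S \cap L_i$ for a subcomplex $L_i$ of the ambient finite simplicial complex $K$, and with $T_i = S_i \setminus S_{i-1}$ a single Morse tile contained in a simplex $\sigma_i$ of $K$. The aim is to refine the induced filtration $\{S_i^{(n-1)}\}_{0 \leq i \leq N}$ into a Morse shelling of $S^{(n-1)}$ in which every tile is contained in some $T_i$.

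At each index $i$ with $\dim T_i < n$ the tile $T_i$ already sits in $S^{(n-1)}$ and the step $S_{i-1}^{(n-1)} \subset S_i^{(n-1)}$ is preserved unchanged, since $S_i^{(n-1)} = S^{(n-1)} \cap (L_i \cap K^{(n-1)})$. At each index $i$ with $\dim T_i = n$, Lemma \ref{Lem_Co1Skel} furnishes a Morse shelling of $T_i \cap \partial \sigma_i$ by sub-tiles $U_1, \ldots, U_{m_i}$. The construction in its proof, combined with Proposition \ref{Prop_4.1}, shows that every partial union $U_1 \cup \cdots \cup U_a$ equals $T_i \cap Y_a$ for a subcomplex $Y_a$ of $\sigma_i$ assembled from the non-removed facets of $T_i$ (ordered so that in the non-basic case the face $\tau$ removed from $T_i$ lies in $Y_1$). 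These sub-steps replace the single step $S_{i-1}^{(n-1)} \subset S_i^{(n-1)}$ in the filtration.

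The core step is to verify that each intermediate set of the refined filtration is a Morse tiled subset of $S^{(n-1)}$. For the partial sets $V_a = S_{i-1}^{(n-1)} \cup (T_i \cap Y_a)$ the candidate subcomplex is $L' = L_{i-1} \cup Y_a \subset K$, and the equality $V_a = S^{(n-1)} \cap L'$ reduces, after expanding the right-hand side, to the inclusion $S \cap Y_a \subset S_{i-1} \cup T_i$. The key disjointness statement is that $T_j \cap \sigma_i = \emptyset$ for every $j > i$: since $\sigma_i^\circ \subset T_i \subset S_i$ the subcomplex $L_i$ must contain $\sigma_i$, so any point $p \in T_j \cap \sigma_i$ would satisfy $p \in S \cap L_i = S_i$, and because $S_i$ is a union of tiles this would force $T_j \subset S_i$, contradicting $j > i$. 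Tiles $T_j$ with $j < i$ contribute only to $S_{i-1}^{(n-1)}$ because $Y_a \subset K^{(n-1)}$, which finishes the inclusion.

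The tile containment property follows from the construction, since every new tile of the refined shelling is either an unchanged $T_i$ of dimension less than $n$ or a sub-tile of $T_i$ produced by Lemma \ref{Lem_Co1Skel}. Iterating the argument while decreasing $n$ produces Morse shellings on all skeletons $S^{(j)}$, $j \geq 0$, with the stated containment property. The main obstacle is the subcomplex bookkeeping for the partial intermediate sets, but it is entirely controlled by the elementary disjointness $T_j \cap \sigma_i = \emptyset$ for $j > i$.
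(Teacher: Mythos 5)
Your proof is correct and takes essentially the same route as the paper: reduce to the codimension one skeleton, shell it by inserting, in the order of the given shelling, the skeleton shellings of the individual tiles furnished by Lemma \ref{Lem_Co1Skel}, and finish by decreasing induction on the dimension of the skeleton. You merely make the subcomplex bookkeeping more explicit than the paper does (your subcomplexes $Y_a$ and the disjointness $T_j\cap\sigma_i=\emptyset$ for $j>i$); the only point you leave implicit is that the refined partition of $S^{(n-1)}$ is indeed a Morse tiling, which is exactly Corollary \ref{Cor_SkeletonM}.
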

 
 \begin{proof}
 Let $S$ be equipped with a Morse shelling. By Definition \ref{Morseshellable}, there exists a filtration $\emptyset\subset S_1\subset\ldots\subset S_N=S$ by Morse tiled subsets of $S$ such that for every $i\in \{1,\dots, N\},$ $S_i\setminus S_{i-1}$ is a single Morse tile.  Let $n$ be the dimension of $S$, it is enough to prove this result for the $(n-1)$-skeleton of $S$, since replacing $S$ by $S^{(n-1)}$ we get the result by decreasing induction. 

We proceed by induction on $i\in \{1,\ldots, N\}$. If $i=1$, $S_1$ is a  single Morse tile and its  $(n-1)$-skeleton is shellable by Lemma \ref{Lem_Co1Skel}. Let us assume now that this result holds true for $S_{i-1}$ and prove it for $S_i$. By the induction hypothesis, the skeleton $S^{(n-1)}_{i-1}$ is shellable and by Lemma \ref{Lem_Co1Skel}, the $(n-1)$-skeleton of the Morse tile $S_i\setminus S_{i-1}$ is  shellable as well.
Then, the concatenation of these shellings provides a shelling of $S_i^{(n-1)}$. Indeed, for every $j\geq 0$, the union of tiles of dimension greater than $j$ in this concatenation is the intersection with $S_i$ of the $(n-1)$-skeleton of $L_j$, where $L_j$ is a subcomplex of a complex $K$ containing $S$ such that the union of tiles of dimension greater than $j$ in $S$ is the trace $L_j \cap S$. 
 \end{proof}
 
 We likewise prove in subsection \ref{Ssect_TilingThm}  that barycentric subdivisions of Morse shellable sets are Morse shellable, see Corollary \ref{Cor_BarSubM}.

\subsection{The tiling theorem}\label{Ssect_TilingThm}

 For every  Morse tile $T=\Delta_n\setminus (\sigma_1\cup\ldots \cup \sigma_k\cup \tau)$, we set $\Sd(T)=\Sd(\Delta_n)\setminus \big(\cup_{i=1}^k \Sd(\sigma_i)\cup\Sd(\tau)\big)$, where $\Sd(\Delta_n)$ denotes the first barycentric subdivision of $\Delta_n$, see \cite{M}.

\begin{theorem}\label{Thm_Sd}
The first barycentric subdivision of every Morse tile $T$ is Morse shellable, shelled by tiles of the same dimension as $T$. Moreover, such a Morse shelling can be chosen such that it contains a critical tile iff $T$ is critical and this critical tile is then unique of the same index as $T$.
\end{theorem}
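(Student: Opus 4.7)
The plan is to argue by induction on $n=\dim T$, the base case $n=0$ being immediate since $\Sd(\Delta_0)=\Delta_0=C^0_0$ is its own single-tile Morse shelling with a unique critical tile of index~$0$. For the inductive step, write $T=T^{n,l}_k=\Delta_n\setminus(\sigma_1\cup\cdots\cup\sigma_k\cup\tau)$ and let $b$ denote the barycenter of $\Delta_n$, so that $\Sd(\Delta_n)=b\ast\Sd(\partial\Delta_n)$ realises every top $n$-simplex of $\Sd(\Delta_n)$ as a cone $b\ast\bar\delta$ over a top $(n-1)$-simplex $\bar\delta$ of $\Sd(\partial\Delta_n)$. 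The strategy is to build a Morse shelling of $\Sd(\partial\Delta_n)$ via the induction hypothesis, and then lift it to $\Sd(T)$ by coning from~$b$.

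First I would use Propositions~\ref{Prop_4.1} and~\ref{Prop_JSkeletonM} to decompose both $T\cap\partial\Delta_n$ and its complement $\sigma_1\cup\cdots\cup\sigma_k\cup\tau$ in $\partial\Delta_n$ into their canonical Morse tilings by $(n-1)$-dim tiles. The induction hypothesis then provides a Morse shelling of $\Sd$ of each constituent tile, and concatenating these along the filtration $\emptyset\subset\Sd(\sigma_1)\subset\cdots\subset\Sd(\sigma_1\cup\cdots\cup\sigma_k\cup\tau)\subset\Sd(\partial\Delta_n)$ yields a Morse shelling of $\Sd(\partial\Delta_n)$ whose initial portion processes the removed set. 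Listing the underlying top $(n-1)$-simplices $\bar\delta_1,\ldots,\bar\delta_P$ in this shelling order, I would put $L_i=L_{i-1}\cup(b\ast\bar\delta_i)$ as a subcomplex of $\Sd(\Delta_n)$ and define $V_i=(L_i\setminus L_{i-1})\cap\Sd(T)$.

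The key computation is that $L_i\setminus L_{i-1}=(b\ast\bar\delta_i)\setminus b\ast(\bar\delta_i\cap L_{i-1})$ is a basic tile $T^n_{m_i}$ of the top simplex $b\ast\bar\delta_i$, with $m_i$ lateral facets removed (namely those coming from facets of $\bar\delta_i$ already shared with $L_{i-1}$); intersecting with $\Sd(T)$ further removes the unique maximal face of $\bar\delta_i$ contained in $\sigma_1\cup\cdots\cup\sigma_k\cup\tau$, as dictated by the chain analysis underlying Proposition~\ref{Prop_JSkeletonM}. Proposition~\ref{Prop_ConeMorse} then ensures the combined removal turns $V_i$ into a Morse tile of dimension~$n$. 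For the critical-tile count: whenever $\bar\delta_i$ lies in the outer portion of the shelling, the $\Sd(T)$-restriction contributes the full base $\bar\delta_i$ as an additional removed facet of $b\ast\bar\delta_i$, absorbing any potentially critical boundary tile into a regular $n$-tile. The only step at which $V_i$ can remain critical corresponds to the unique critical piece $C^{n-1}_{k-1}$ of the inner decomposition (present if and only if $T$ itself is critical, i.e.\ $l=k-1$), and Proposition~\ref{Prop_ConeMorse} then produces $V_i=C^n_k$ of the same index.

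The hard part will be the critical-tile bookkeeping of the last step. The boundary shelling generically contains several critical $(n-1)$-tiles---the initial $C^{n-1}_0$ of each outer sub-shelling, the terminal $C^{n-1}_{n-1}=T^{n-1}_n$ of the inner decomposition, and possibly more---and one must verify that the cone-plus-restriction operation absorbs all of these spurious criticals into regular Morse $n$-tiles while preserving at most one critical lift, which exists precisely when $T$ itself is critical. This requires a careful choice of the outer-first shelling order, together with a delicate interplay between Proposition~\ref{Prop_ConeMorse} and the maximal-face-in-removed-set analysis described above.
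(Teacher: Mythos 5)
Your overall strategy is the same as the paper's (induction on dimension, coning a shelling of the subdivided boundary from the barycenter of $\Delta_n$, and obtaining the non-basic cases by removing the subdivided extra face), but as written the proposal stops short of proving precisely the two points where the work lies. First, the claim that each piece $V_i=(L_i\setminus L_{i-1})\cap\Sd(T)$ is a Morse tile is not actually established. Proposition~\ref{Prop_ConeMorse} only treats a cone over a Morse tile deprived of its apex and/or of its base; your $V_i$ is a cone deprived of its apex, of the cones over the facets shared with $L_{i-1}$, possibly of the cone over an extra non-facet face (when the boundary tile at step $i$ is itself non-basic -- the induction hypothesis does produce such tiles), \emph{and} of the face $\bar\delta_i\cap\Sd(\sigma_1\cup\cdots\cup\sigma_k\cup\tau)$. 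A Morse tile allows only one removed face of codimension greater than one, so one must check that these two potential extra removals never occur independently, i.e.\ that the extra face of a non-basic boundary tile is always compatible with (indeed produced by) the trace of the removed set. This is exactly what the paper verifies by identifying the tiles of the shelling of $\Sd(T^n_k)$ that meet $\Sd(T^l_k)$ as iterated cones over the tiles of $\Sd(T^l_k)$, so that the removed trace inside a tile $T^n_m$ is precisely $T^l_m$ and the difference is $T^{n,l}_m$; your appeal to ``the chain analysis underlying Proposition~\ref{Prop_JSkeletonM}'' does not substitute for this computation. Relatedly, your intermediate object is not quite legitimate: when $\dim\tau=l<n-1$ the removed set cannot be tiled by $(n-1)$-dimensional tiles, and the concatenated partition of $\Sd(\partial\Delta_n)$ containing an $l$-dimensional tile fails the condition of Definition~\ref{Defn_Morse tiling} (the union of tiles of dimension greater than $l$ is not a subcomplex). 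This is repairable -- you only need the ordering, not a genuine Morse shelling of the boundary -- but it signals that the bookkeeping has not been carried through.

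Second, and more seriously, the ``Moreover'' half of the theorem -- that the shelling can be chosen with a critical tile iff $T$ is critical, unique and of the same index -- is exactly what your last paragraph defers as ``the hard part''. Asserting that the cone-plus-restriction ``absorbs'' the spurious critical $(n-1)$-tiles of the boundary shelling is the statement to be proved, not an argument: one needs to know, for instance, that the shelling of $\Sd(T^{n}_k)$ has a unique order-zero tile only when $k=0$ and a unique order-$(n+1)$ tile only when $k=n+1$, and that in the critical case the unique surviving critical $n$-tile $C^n_k$ comes from the single tile of the tiling of $\Sd(T^{k-1}_k)$ isomorphic to $T^{k-1}_k$, all other tiles meeting $\Sd(T^{k-1}_k)$ becoming regular tiles $T^{n,k-1}_m$ with $0<m<k$. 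In the paper this index count is obtained by first settling the basic tiles (tracking orders through the cone construction) and then running the intersection analysis above; without some version of that argument your proof of the critical-tile statement is missing, so the proposal as it stands is an outline of the paper's proof with its decisive steps left unverified.
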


The fact that the first barycentric subdivision of a basic tile is tileable has already been established in \cite{SW3} and Theorem \ref{Thm_Sd} also recovers the fact that $\Sd(\Delta_n)$ is shellable, see Theorem $5.1$ of \cite{B}.

\begin{proof}
 Let us first prove the result for basic tiles by induction on their dimension $n>0$. If $n=1$, the partitions $\Sd(T_0^1)=T_0^1\sqcup T_1^1$, $\Sd(T_1^1)=2T_1^1$ and $\Sd(T_2^1)=T_1^1\sqcup T_2^1$ are suitable with the filtration $S_1= T^1_0$ and $S_2=\Sd(T^1_0)$,  see Figure \ref{Fig_Sdtile}.

 \begin{figure}[h]
   \begin{center}
    \includegraphics[scale=0.5]{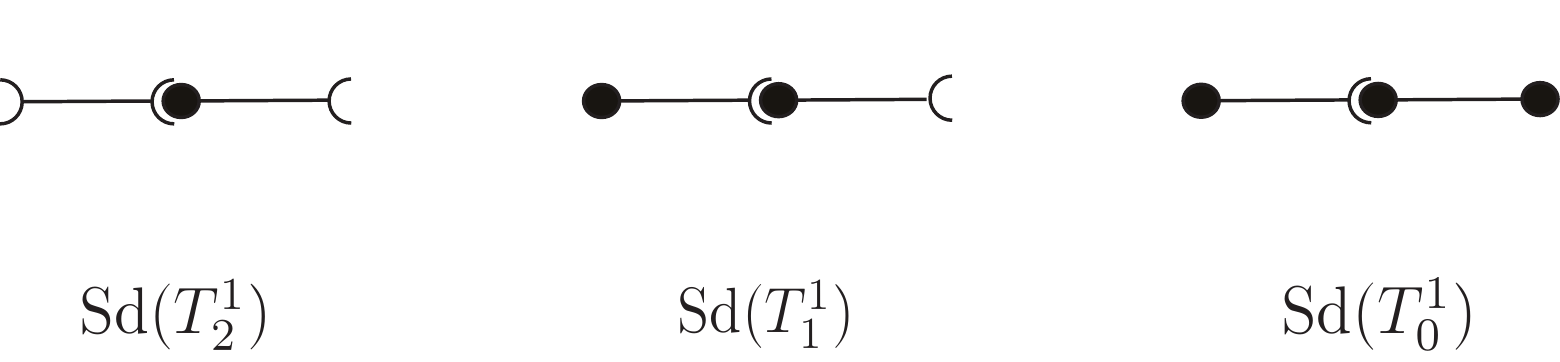}
    \caption{Tilings of subdivided one-dimensional tiles.}
    \label{Fig_Sdtile}
      \end{center}
 \end{figure}

Now, let us assume that the result holds true for $r\leq n-1$ and let us prove it for $r=n$.  From  Proposition \ref{Prop_4.1} (see also
Corollary~4.2 of \cite{SW3}), $\partial \Delta_{n}$ has a partition $\bigsqcup_{k=0}^{n}T_k^{n-1}$ which is such that  for every $r\in \{0,\ldots, n\}$, $\bigsqcup_{k=0}^{r}T^{n-1}_k$ is a subcomplex of $\partial \Delta_{n}$ covered by $r+1$ tiles. We equip $\Sd(\partial \Delta_{n})=\bigsqcup_{k=0}^{n}\Sd(T_k^{n-1})$  with the partition by basic tiles given by the induction hypothesis. There exists a filtration $L_1\subset \ldots \subset L_{(n+1)!}=\Sd(\partial \Delta_{n})$ such that  for every $j\in \{1,\ldots, (n+1)!\}$, $L_j$ is a subcomplex which is the union of  $j$ tiles of the partition. Indeed, if $S^k_i$ is the filtration of $\Sd(T^{n-1}_k)$ given by the induction hypothesis, $k\in \{0,\ldots, n\}$, $i\in \{1,\ldots, n!\}$, we set  for every $j=k n!+i$, $L_j=\bigsqcup_{r=0}^{k-1}T^{n-1}_{r}\sqcup S^k_i$, which is a subcomplex by the induction hypothesis. Then, $\Sd(\Delta_{n})$ gets a partition by cones over the tiles of $\Sd(\partial \Delta_{n})$ centered at the barycenter of $\Delta_{n}$ where all the cones except the one over $T_0^{n-1}$  are deprived of their apex. From Proposition \ref{Prop_4.1} this partition induces a shelling of $\Sd(\Delta_{n})=\Sd(T_0^{n})$ with a unique tile of order zero and no other critical tile,
the cones over the filtration $(L_j)_{j\in \{1,\ldots, (n+1)!\}}$ providing the shelling.
For every $k\in \{1, \ldots, n+1\},$ we equip $\Sd(T_k^{n})=\Sd(\Delta_{n})\setminus \sqcup_{j=0}^{k-1}\Sd(T_j^{n-1})$ with the shelling induced by  removing the bases of all the cones over the tiles included in
 $\bigsqcup_{j=0}^{k-1}\Sd(T_j^{n-1})\subset \Sd(\partial \Delta_{n})$ in the preceding shelling.
From Proposition \ref{Prop_4.1},  these cones deprived of their bases are basic tiles so that we get as well a shelling of $\Sd(T_k^{n})$ which, as in \cite{SW3}, has no more basic tile of order zero as soon as $k>0$
and gets a unique basic tile of order $n+1$ when $k = n+1$. The result is proved in the case of basic tiles.

Let us now prove the result for non-basic tiles  $T^{n,l}_k$. The shelling of $\Sd(T^{n,l}_k)$ is again induced by the one of $\Sd(\Delta_n)$. We obtained the shelling of $\Sd(T^n_k)$  by considering the cones deprived of their bases for every tile of the tiling of $\Sd(T^{n-1}_j)$ with $0\leq j<k$. Among the $(n+1)!$ tiles belonging to the tiling of $\Sd(T^n_k)$, $n!$ are cones deprived of their apex over the tiles of the tiling of $\Sd(T^{n-1}_k)$ and by induction, for every $ k-1\leq l\leq n-1$, $(l+1)!$ of them are iterated cones over the tiles of the tiling of $\Sd(T^l_k)$. If $l=k-1$, the tiling of $\Sd(T^{k-1}_k)$ contains a unique tile $T^{k-1}_k={\stackrel{\circ}\Delta}_{k-1}$ together with tiles $T^{k-1}_m$ with $0<m<k$ by the previous case. Hence, the $k!$ tiles of the tiling of $\Sd(T^n_k)$ which intersect $\Sd(T^{k-1}_k)\subset \Sd(T^n_k)$ consist of a tile $T^n_k$ and tiles $T^n_m$ with $0<m<k$ by  Proposition \ref{Prop_4.1}.
The Morse tiling induced on $\Sd(C^n_k)=\Sd(T^n_k)\setminus \Sd(T^{k-1}_k)$  hence consists of  a tile $C^n_k=T^n_k\setminus T^{k-1}_k$ together with tiles $T_m^{n,k-1}=T^n_m\setminus T_m^{k-1}$ with $0<m<k$ and tiles $T^n_m$ with $0<m\leq n.$  As before, this Morse tiling is a Morse shelling, the Morse shelling being obtained by concatenation.
Finally, if $l\geq k$, the shelling of $\Sd(T^l_k)$ contains tiles $T^l_m$ with $0\leq m\leq l$, the first inequality being strict if $k>0$. The $(l+1)!$ tiles of the tiling of $\Sd(T^n_k)$ which intersect $\Sd(T^l_k)\subset \Sd(T^n_k)$  thus consist of tiles $T^n_m$ with $0\leq m\leq l$ by Proposition \ref{Prop_4.1}, they are iterated cones of the previous ones. The Morse shelling induced on $\Sd(T^{n,l}_k)=\Sd(T^n_k)\setminus \Sd(T^l_k)$ hence consists of tiles $T_m^{n,l}=T^n_m\setminus T^l_m$ with $0<m\leq l$ and of tiles $T^n_j$ with $0<j\leq n$.  \end{proof}

\begin{remark}\begin{enumerate}
\item We actually proved that the regular Morse tiles involved in the partition of $\Sd(C^n_k)$
are either basic, or isomorphic to $T^{n, k-1}_m$ with $0<m<k$. Likewise, the tiles involved in the partition of $\Sd(T^{n, l}_k)$ are either basic or isomorphic to $T^{n,l}_m$ with $0< m\leq l < n-1.$

\item One may check that  $\Sd(C_2^3)$ does not admit any partition involving only critical Morse tiles and basic tiles so that non-basic regular Morse tiles are needed to get Theorem \ref{Thm_Sd}.
\end{enumerate}
\end{remark}

\begin{corollary}\label{Cor_BarSubM}
Let $S$ be a Morse  tileable (resp. shellable) set, then so is its first barycentric subdivision $\Sd(S)$. Moreover, given a Morse  tiling (resp. shelling) on $S$, any induced tiling (resp. shelling) on $\Sd(S)$ contains the same number of critical tiles with the same indices. 
\end{corollary}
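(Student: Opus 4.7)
The plan is to reduce to Theorem \ref{Thm_Sd} by subdividing each Morse tile of $S$ individually and patching the results. Let $S \subset K$ carry a Morse tiling. For each Morse tile $T$ of $S$ with closure $\sigma = \overline{T}$, Theorem \ref{Thm_Sd} furnishes a Morse shelling of $\Sd(T)$ inside $\Sd(\sigma) \subset \Sd(K)$ whose tiles all have the dimension of $T$ and which contains a unique critical tile of the same index iff $T$ is itself critical. Taking the union of all these individual partitions over the tiles of $S$ yields a candidate partition of $\Sd(S)$ by Morse tiles, and summing over the tiles of $S$ immediately gives the claim on critical tile counts.

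To verify the subcomplex condition of Definition \ref{Defn_Morse tiling}, the key is dimension preservation: for each $j \geq 0$, the union of tiles of dimension greater than $j$ in the induced partition of $\Sd(S)$ equals $\bigsqcup_{\dim T > j}\Sd(T) = \Sd(L_j \cap S) = \Sd(L_j) \cap \Sd(S)$, where $L_j$ is the subcomplex of $K$ provided by the Morse tiling of $S$. Since $\Sd(L_j)$ is a subcomplex of $\Sd(K)$, this proves the Morse tileable case. For the shellable case, I would start from a Morse shelling $\emptyset = S_0 \subset \ldots \subset S_N = S$ with $S_i = S \cap L_i$ and $T_i = S_i \setminus S_{i-1}$ a single Morse tile of closure $\sigma_i$, and let $\emptyset = V^i_0 \subset \ldots \subset V^i_{m_i} = \Sd(T_i)$ be the Morse shelling of $\Sd(T_i)$ from Theorem \ref{Thm_Sd}, written as $V^i_k = P^i_k \cap \Sd(T_i)$ for subcomplexes $P^i_k \subset \Sd(\sigma_i)$. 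Concatenating these produces the candidate shelling of $\Sd(S)$, whose consecutive differences are single Morse tiles.

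The main obstacle is to check that each intermediate term $\Sd(S_{i-1}) \cup V^i_k$ in this concatenated filtration is a Morse tiled subset of $\Sd(S)$. My proposed ambient subcomplex is $M^i_k := \Sd(L_{i-1}) \cup P^i_k \subset \Sd(K)$, and the task reduces to showing $M^i_k \cap \Sd(S) = \Sd(S_{i-1}) \cup V^i_k$. The crucial geometric observation is that $(\sigma_i \setminus T_i) \cap S$ is disjoint from $T_i$ but contained in $S_i = L_i \cap S$, so the identity $S_i \setminus S_{i-1} = T_i$ forces $(\sigma_i \setminus T_i) \cap S \subset S_{i-1}$. Passing to subdivisions then gives $P^i_k \cap \Sd(S) \subset V^i_k \cup \Sd(S_{i-1})$, from which the desired equality follows by a short distributive computation and concludes the argument.
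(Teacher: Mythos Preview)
Your proof is correct and follows essentially the same route as the paper: subdivide each tile via Theorem~\ref{Thm_Sd}, use dimension preservation to check the subcomplex condition of Definition~\ref{Defn_Morse tiling}, and for the shellable case concatenate the individual shellings of the $\Sd(T_i)$. The paper handles the shellable case more tersely, simply invoking ``concatenation as in the proof of Theorem~\ref{Thm_SkelBarSubShell}'' without spelling out the ambient subcomplex for the intermediate terms; your explicit construction of $M^i_k = \Sd(L_{i-1}) \cup P^i_k$ and the verification that $(\sigma_i \setminus T_i)\cap S \subset S_{i-1}$ (via $\sigma_i \subset L_i$) is a useful elaboration of exactly the step the paper leaves implicit.
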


\begin{proof} 
Let us first assume that $S$ is a Morse tileable subset of a finite simplicial complex $K$. In order to equip $\Sd(S)$ with a Morse  tiling, we first equip $S$ with a Morse  tiling and then, for each of its tile $T$, equip $\Sd(T)$ with a Morse  tiling given by Theorem \ref{Thm_Sd}. It is indeed a tiling since for every $j \geq 0$, the union of tiles of dimension greater than $j$ of $\Sd(S)$ is the first barycentric subdivision of the union of tiles of dimension greater than $j$ of $S$, so that if the latter is the intersection with $S$ of a subcomplex $L_j$ of $K$, then the former is the intersection with $\Sd(S)$ of the subcomplex $\Sd(L_j)$ of $\Sd(K)$.
 
Let us now prove that the barycentric subdivision of $S$ is shellable, provided $S$ is.  Let then $S$ be equipped with a Morse shelling. By Definition \ref{Morseshellable}, there exists a filtration $\emptyset\subset S_1\subset\ldots\subset S_N=S$ by Morse tiled subsets of $S$ such that for every $i\in \{1,\dots, N\},$ $S_i\setminus S_{i-1}$ is a single Morse tile.
We proceed by induction on $i\in\{1,\ldots, N\}$. If $i=1$, $S_1$ is a closed simplex and the result follows from Theorem \ref{Thm_Sd}.  Let now the result be proved up to the rank $i-1$. Then $S_i\setminus S_{i-1}$ is a Morse tile and we get a shelling of $\Sd(S_i)$ by concatenation of the shelling of $\Sd(S_{i-1})$ with the shelling of $\Sd(S_i\setminus S_{i-1})$ given by Theorem \ref{Thm_Sd}, as in the proof of Theorem \ref{Thm_SkelBarSubShell}. By  Theorem \ref{Thm_Sd}, these induced tiling (resp. shelling) on $\Sd(S)$ contain the same number of critical tiles of the same indices as the one of $S$. Hence the result.
\end{proof}

\subsection{Packings and $h$-vectors}\label{SSect_Packing}

When an $h$-tiling $\mathcal{T}$ of a finite $h$-tileable simplicial complex $K$ only involves tiles of the same dimension $n$, we may encode the number of tiles of each order into the $h$-vector
$h(\mathcal{T}) = (h_0(\mathcal{T}) ,\dots ,  h_{n+1}(\mathcal{T}))$ of the tiling, see Definition $4.8$ of \cite{SW3}. Then, by Theorem $4.9$ of \cite{SW3}, two $h$-tilings $\mathcal{T}$ and $\mathcal{T}'$ of $K$ have the same $h$-vectors provided 
$h_0(\mathcal{T}) = h_0(\mathcal{T}')$ and if moreover $h_0(\mathcal{T}) = 1$, this $h$-vector $h(\mathcal{T})$ coincides with the $h$-vector of $K$ by Corollary $4.10$ of \cite{SW3}, see also \cite{Ful, Z} for a definition. In particular, $h$-tilings provide in this situation a geometric interpretation of the $h$-vector as the number of tiles of each order needed to tile the complex. A part of these results remains valid in the case of Morse tilings. Namely, for every Morse tiling $\mathcal{T}$ on a Morse tileable set, let us denote by $h^j_0(\mathcal{T})$ (resp. $h^j_1(\mathcal{T})$) the number of basic tiles of dimension $j$ and order zero (resp. order one) contained in $\mathcal{T}$, $j \geq 0$.

\begin{proposition}\label{Propf0}
Let $\mathcal{T}$ be a  Morse tiling on an $n$-dimensional  Morse tileable set $S$. Then, $\sum_{j=0}^n (j+1) h^j_0(\mathcal{T}) + h_1 (\mathcal{T}) = f_0 (S)$, where $h_1 (\mathcal{T}) = \sum_{j=0}^n h^j_1(\mathcal{T})$ and $f_0 (S)$ denotes the number of vertices of $S$.
\end{proposition}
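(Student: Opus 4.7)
The plan is to count the vertices of $S$ by adding up, tile by tile, how many vertices of $K$ each tile contains. Because the tiles of $\mathcal{T}$ partition $S$, every vertex of $S$ lies in exactly one tile, so
\[
f_0(S) \;=\; \sum_{T\in\mathcal{T}} \bigl|\,T\cap V(K)\,\bigr|,
\]
where $V(K)$ denotes the vertex set of $K$.

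I would then compute $|T\cap V(K)|$ directly from the parameters $(n,l,k)$ of a tile $T \cong T^{n,l}_k = \Delta_n\setminus(\sigma_1\cup\cdots\cup\sigma_k\cup\tau)$. A vertex $v$ of $\Delta_n$ lies in $T$ if and only if $v$ is outside every facet $\sigma_i$ and outside $\tau$. Each facet of $\Delta_n$ omits a unique vertex, its opposite $w_i$, so the first set of conditions selects at most one vertex, and selects none as soon as $k\geq 2$, since distinct facets have distinct opposite vertices. Hence any Morse tile with $k\geq 2$ contributes no vertex at all.

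For $k=1$ the only candidate is the opposite vertex $w_1$ of $\sigma_1$, and here the defining non-containment condition $\tau\not\subset\sigma_1$ of a Morse tile is crucial: it forces $w_1$ to be a vertex of $\tau$, so $w_1\in\tau$ and the tile again contributes $0$. For $k=0$ the tile is simply $\Delta_n\setminus\tau$ and contains exactly the $n-l$ vertices of $\Delta_n$ outside $\tau$ (with the convention $l=-1$, $\tau=\emptyset$). Specialising to basic tiles $T^n_m$, the Morse parameters are $k=0,\,l=-1$ when $m=0$ and $k=0,\,l=n-1$ when $m=1$, contributing respectively $n+1$ and $1$ vertex; for $m\geq 2$ one has $k=m-1\geq 1$ and the tile contributes $0$.

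Summing these contributions over $\mathcal{T}$, only basic tiles of order $0$ and of order $1$ carry vertices, with weights $\dim T+1$ and $1$ respectively, and the total equals
\[
\sum_{j=0}^{n}(j+1)\,h^j_0(\mathcal{T})+h_1(\mathcal{T})=f_0(S),
\]
as claimed. The step that really needs attention is the case $k=1$: the hypothesis $\tau\not\subset\sigma_1$, built into the very definition of a Morse tile, must be invoked to absorb the unique surviving vertex $w_1$ into $\tau$. Without this condition one could not rule out an extra vertex in index-one tiles, so this is the delicate point of the argument; the rest is a direct case analysis on $(n,l,k)$.
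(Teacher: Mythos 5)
Your overall strategy --- counting the vertices of $S$ tile by tile --- is the same as the paper's, which cites Propositions \ref{Prop_UniqueTile} and \ref{Prop_JSkeletonM} where you recompute directly, and your per-tile counts are correct; in particular the $k=1$ case you single out as delicate is handled fine ($\tau\not\subset\sigma_1$ indeed forces $w_1\in\tau$). The genuine gap is the final aggregation step, and it contradicts your own case analysis: for $k=0$ you correctly find that $T^{n,l}_0=\Delta_n\setminus\tau$ contains $n-l$ vertices, but when $0\le l\le n-2$ this is a \emph{non-basic} regular Morse tile carrying $n-l\ge 2$ vertices, and it is counted neither by $h^j_0(\mathcal{T})$ (which by definition counts only basic tiles of order zero) nor by $h_1(\mathcal{T})$. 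Your conclusion that ``only basic tiles of order $0$ and of order $1$ carry vertices'' therefore does not follow from what you proved; to close the argument you would need to show that regular Morse tiles of vanishing order cannot occur in a Morse tiling, or account for their vertices, and neither is addressed. They can in fact occur in a Morse tiled set: the single tile $\Delta_2\setminus\{v\}$, with $v$ a vertex, already satisfies Definition \ref{Defn_Morse tiling}, and there the left-hand side of the formula is $0$ while $f_0=2$. So the identity really rests on the (implicit) hypothesis that $\mathcal{T}$ contains no non-basic tile with $k=0$, a hypothesis satisfied by all tilings constructed in the paper, e.g.\ those produced by Theorem \ref{Thm_Sd}, whose non-basic tiles all have $k\ge 1$.

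To be fair, the paper's own short proof makes the same silent restriction: it asserts, via Propositions \ref{Prop_UniqueTile} and \ref{Prop_JSkeletonM}, that the only Morse tiles containing vertices are basic tiles of order zero and one, and this is exactly wrong for $T^{n,l}_0$ with $l\le n-2$ (whose $0$-skeleton those propositions describe as a nonempty union of points). So your route mirrors the paper's, and your misstep is the same one; but because you explicitly computed the count $n-l$ in the $k=0$ case, the leap to the final sum is a visible internal inconsistency in your write-up rather than a hidden one. A correct version should either add the hypothesis that no regular tile of vanishing order appears, or include the correction term coming from those tiles; and the truly delicate case to flag is $k=0$ with $0\le l\le n-2$, not $k=1$.
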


\begin{proof}
By Proposition \ref{Prop_UniqueTile} and Proposition \ref{Prop_JSkeletonM}, the only Morse tiles which contain vertices are basic tiles of order zero and one. The former contain $j+1$ vertices if they are of dimension $j$ while the latter contain a single vertex, whatever their dimension is. Counting the number of vertices of $S$ by using the partition $\mathcal{T}$, we deduce the result. 
 \end{proof}
 
\begin{corollary}
Let $\mathcal{T}$ and $\mathcal{T}'$ be two Morse tilings on a Morse tileable set which contain only tiles of the same dimension. Then, $h_0(\mathcal{T}) = h_0(\mathcal{T}')$ if and only if $h_1(\mathcal{T}) = h_1(\mathcal{T}')$. $\square$
\end{corollary}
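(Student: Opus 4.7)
The plan is to invoke Proposition~\ref{Propf0} twice and compare. Let $n = \dim S$. Because the dimension of a Morse tileable set is an invariant of the set (the remark following Definition~\ref{Defn_Morse tiling}), a tiling whose tiles all share a single dimension must have that dimension equal to $n$. Hence for both $\mathcal{T}$ and $\mathcal{T}'$ the contributions $h^j_0$ with $j<n$ vanish, and I write $h_0(\mathcal{T})$ for the remaining value $h^n_0(\mathcal{T})$ (equivalently, the total count of order-zero basic tiles across all dimensions).

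Under this collapse, the identity of Proposition~\ref{Propf0} reduces to
\[
(n+1)\,h_0(\mathcal{T}) + h_1(\mathcal{T}) \;=\; f_0(S) \;=\; (n+1)\,h_0(\mathcal{T}') + h_1(\mathcal{T}'),
\]
the right-hand quantity $f_0(S)$ being an invariant of $S$ independent of any chosen tiling. Subtracting the two equations yields
\[
(n+1)\bigl(h_0(\mathcal{T}) - h_0(\mathcal{T}')\bigr) \;=\; h_1(\mathcal{T}') - h_1(\mathcal{T}).
\]
Since the coefficient $n+1$ is a nonzero integer, the left-hand side vanishes if and only if the right-hand side does, which is exactly the claimed equivalence.

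There is no real obstacle to overcome here: once Proposition~\ref{Propf0} is in hand the corollary is a one-line algebraic consequence. The only point worth stating explicitly in the write-up is the reduction of $\sum_{j=0}^{n}(j+1)\,h^j_0(\mathcal{T})$ to $(n+1)\,h_0(\mathcal{T})$, i.e.\ the observation that the hypothesis ``tiles of the same dimension'' forces that common dimension to be $\dim S$ and therefore kills every term in the sum except the one with $j=n$.
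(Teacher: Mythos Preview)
Your argument is correct and is exactly the intended one: the paper marks the corollary with a $\square$ because it follows immediately from Proposition~\ref{Propf0}, and your write-up makes that immediate deduction explicit, including the one point that needs saying --- that the common dimension of the tiles is forced to be $n=\dim S$, so the sum $\sum_{j}(j+1)h^j_0$ collapses to $(n+1)h_0$.
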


As in \S $5$ of \cite{SW3}, Morse tilings can be used to produce packings by disjoint simplices in Morse tileable sets.
 
\begin{proposition}\label{Proppack}
Let $\mathcal{T}$ be a Morse tiling on a Morse tileable set $S$. Then, it is possible to pack in $Sd (S)$ a disjoint union of simplices containing, for every $j \geq 0$, at least $ h^j_0(\mathcal{T}) + h^j_1 (\mathcal{T})$ $j$-dimensional ones. 
\end{proposition}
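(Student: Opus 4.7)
The plan is to adapt the strategy of \S~5 in \cite{SW3}: each basic tile of order $0$ or $1$ occurring in the Morse tiling $\mathcal{T}$ will contribute exactly one top-dimensional simplex of its barycentric subdivision to the packing, while tiles of higher order and non-basic tiles will be ignored. Since $\mathcal{T}$ partitions $S$ into pairwise disjoint Morse tiles, simplices coming from distinct tiles will automatically be interior-disjoint, so only the construction inside a single basic tile of order $0$ or $1$ needs attention.

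For each basic tile $T$ of dimension $j$ and order $0$, $T$ is a closed $j$-simplex $\Delta_j$, and I would simply pick any top-dimensional simplex of $\Sd(\Delta_j)$; concretely, fix a maximal flag $F_0<F_1<\ldots<F_j=\Delta_j$ of faces and set $\tau(T):=[b(F_0),\ldots,b(F_j)]$, where $b(F)$ denotes the barycenter of $F$. For each basic tile $T$ of dimension $j$ and order $1$, write $T=\Delta_j\setminus \sigma$ for its missing facet $\sigma$, and let $v_0$ be the vertex of $\Delta_j$ opposite to $\sigma$; I would then choose a maximal flag starting at $F_0=\{v_0\}$ and again take $\tau(T):=[b(F_0),\ldots,b(F_j)]$. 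The key verification is that $\tau(T)\subset T$ in this second case: since $v_0\in F_k$ for every $k$, the $v_0$-barycentric coordinate of $b(F_k)$ equals $1/(k+1)>0$, so every point of $\tau(T)$ has positive $v_0$-coordinate and therefore does not lie on $\sigma$.

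Putting these choices together over all basic tiles of orders $0$ and $1$ of $\mathcal{T}$ produces, in each dimension $j\geq 0$, a family of at least $h^j_0(\mathcal{T})+h^j_1(\mathcal{T})$ $j$-simplices of $\Sd(S)$ whose interiors are pairwise disjoint (each $\tau(T)$ being contained in its own tile $T$, and the tiles themselves being disjoint), which is exactly the required packing. The main obstacle is really just the barycentric coordinate verification in the order-$1$ case; higher-order and non-basic tiles raise no difficulty because they play no role in the packing and are not required to accommodate a $j$-simplex of $\Sd$.
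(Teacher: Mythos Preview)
Your argument is correct and follows essentially the same route as the paper: for each basic tile $T$ of order $0$ or $1$, you select a top-dimensional simplex of $\Sd(\Delta_j)$ having as a vertex a vertex of $\Delta_j$ that lies in $T$, and such a simplex is contained in $T$; the paper states this last inclusion without proof, while your barycentric-coordinate computation supplies the details. One minor remark: your justification actually shows the chosen simplices are pairwise \emph{disjoint} (not merely interior-disjoint), since each $\tau(T)$ is contained in its tile $T$ and the tiles partition $S$.
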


\begin{proof}
A basic tile $T \subset \Delta_j$ of order zero or one contains at least one vertex $v$ and a $j$-simplex of $\Sd(\Delta_j)$ containing $v$ is contained in $\Sd(T)$. A choice of such a simplex for each subdivided basic tile of order zero or one provides a suitable packing. 
 \end{proof}
 
The packings given by Proposition \ref{Proppack} have non-trivial asymptotic under a large number of barycentric subdivisions. Indeed, assume for instance that the Morse tiling $\mathcal{T}$ only contains tiles of the same dimension $n$. Then, by Theorem \ref{Thm_Sd}, $h_0 (\Sd^d (\mathcal{T}))$ is constant, so that by Proposition \ref{Propf0}, $h_0 (\Sd^d (\mathcal{T})) + h_1 (\Sd^d( \mathcal{T})) \sim_{d \to + \infty} f_0 (\Sd^d (S))$, while by
\cite{BW} (see also \cite{DPS, SW1}), $\frac{f_0 (\Sd^d (S))}{f_n (S) (n+1)!^d}$ converges to a positive limit $q_0$ as $d$ grows to $+ \infty$, where $f_n (S)$ denotes the number of $n$-dimensional tiles of $S$. Proposition \ref{Proppack} makes it possible to pack at least a number of disjoint $n$-simplices in $\Sd^d (S)$ asymptotic to $q_0 f_n (S) (n+1)!^{d-1}$ as $d$ grows to $+ \infty$. Such packings where used in 
\cite{SW3} to improve upper estimates on the expected Betti numbers of random subcomplexes in a simplicial complex $K$. More general packing results are obtained in \S~$5$ of \cite{SW3}, where simplices are allowed to intersect each other in  low dimensions. 

\section{Morse shellable triangulations and discrete Morse theory}\label{Ssect_Morse}

\subsection{Discrete Morse theory}\label{subsecdiscrete}

Let us recall few notions of the discrete Morse theory introduced by Robin Forman, see \cite{F1, F2}.
 Let $K$ be a finite simplicial complex. For every $p \geq 0$, we denote  by $K^{[p]}$ its set of $p$-simplices and  for every $\tau,\sigma$ in ${K}$, $\tau>\sigma$ means that $\sigma$ is a face of $\tau$. 

 \begin{definition}[Page 91 of \cite{F1}]\label{def1}
 A function $f:{K}\to \R$ is a discrete Morse function iff for every $p$-simplex $\sigma$ of $K^{[p]}$, the following two conditions are satisfied:
 \begin{enumerate}
 \item $\#\{\tau \in   K^{[p+1]} \,|\,  \tau>\sigma \text{ and } f(\tau)\leq f(\sigma)\}\leq 1$,
 \item $\#\{\nu  \in   K^{[p-1]} \,|\,  \nu<\sigma \text{ and }  f(\nu)\geq f(\sigma\} \leq 1$.
 \end{enumerate}
 \end{definition}
 
  \begin{definition} [Definition~9.1 of \cite{F1}]\label{Defn_9.1}
 
 A discrete vector field on a simplicial complex $K$ is a map $W: K\to K\cup \{0\}$ such that:
 \begin{enumerate}
 \item $\forall p\geq 0$, $W(K^{[p]})\subset K^{[p+1]}\cup \{0\}.$
 \item  For every $\sigma \in K^{[p]}$, either $W(\sigma)=0$ or $\sigma$ is a face of $W(\sigma).$ 
 \item If $\sigma \in \textup{Im}(W)$, then $W(\sigma)=0$.
 \item  For every $ \sigma\in K^{[p]}$, $ \#\{v\in K^{[p-1]}\,|\, W(v)=\sigma\}\leq 1$.
 \end{enumerate}
 \end{definition}
  
 \begin{definition}[Remark on page 131 of \cite{F1}]
 A critical point of a discrete vector field $W$ on a simplicial complex $K$ is a simplex $\sigma\in K$ such that $W(\sigma)=0$ and $\sigma\notin \textup{Im}(W).$
 \end{definition}
 
We set the index of a critical point $\sigma$ of a discrete vector field $W$ to be the dimension of $\sigma$.
 
 \begin{definition}[Definition~6.1 of \cite{F1}]\label{Defn_6.1}
 The gradient vector field of a discrete Morse function $f : K \to \R$ is the discrete vector field $W_f: K\to K\cup \{0\}$ such that for every $p$-simplex $\sigma \in K$,
 $W_f (\sigma) = 0$ if there is no $(p+1)$-simplex $\tau$ such that $\tau >\sigma \text{ and } f(\tau)\leq f(\sigma)$ while $W_f (\sigma) =  \tau$
 otherwise. 
 \end{definition}
 
  \begin{remark}
 The gradient vector field is actually defined on oriented simplices in \cite{F1} and Definition \ref{Defn_6.1} should rather read $W_f (\sigma) = - \langle \partial \tau , \sigma \rangle \tau$ in case $\tau >\sigma \text{ and } f(\tau)\leq f(\sigma)$.
 However, orientations do not play any role throughout this paper. 
  \end{remark}

\begin{definition}[Definition~9.2 of \cite{F1}]\label{Defn_9.2}
Let $W$ be a discrete vector field. A $W$-path of dimension $p$ is a sequence of $p$-simplices $\gamma=\sigma_0, \sigma_1,\ldots, \sigma_r$ such that: 

\begin{enumerate}
\item If $W(\sigma_i)=0$, then $\sigma_{i+1}=\sigma_i$.
\item If $W(\sigma_i)\neq 0$, then $\sigma_{i+1}\neq \sigma_i$ and $\sigma_{i+1}<W(\sigma_i)$ (i.e. $\sigma_{i+1}$ is a facet of $W(\sigma_i)$).
\end{enumerate}

The path $\gamma$ is said to be closed iff $\sigma_r=\sigma_0$ and to be non-stationary iff $\sigma_1\neq \sigma_0$.
\end{definition}

 \begin{remark}\label{RemCW}
These Definitions \ref{Defn_9.1} - \ref{Defn_9.2} are given in \cite{F1} in the more general setting of regular CW-complexes rather than simplicial complexes. They extend to Morse tiled sets in the sense of Definition \ref{Defn_Morse tiling} as well, replacing simplices by their relative interiors. 
 \end{remark}
 
\begin{theorem}[Theorem~9.3 of \cite{F1}]\label{Thm_9.3}
Let $W$ be a discrete vector field on a finite simplicial complex. There is a discrete Morse function $f$ for which $W$ is the gradient vector field if and only if $W$ has no non-stationary closed paths. 
Moreover,  for every such $W$, $f$ can be chosen to have the property that if a $p$-simplex is critical, then $f(\sigma)=p$. 
\end{theorem}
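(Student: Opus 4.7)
The plan is to prove the two implications separately and then sharpen the construction on the ``if'' side to achieve self-indexing.

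The ``only if'' direction is quick. Suppose $W = W_f$ and fix a non-stationary $W$-path $\sigma_0, \sigma_1, \ldots, \sigma_r$. I would show that $f(\sigma_0) > f(\sigma_1) > \ldots > f(\sigma_r)$, which forbids closure. At each step $W(\sigma_i) \neq 0$ and $f(W(\sigma_i)) \leq f(\sigma_i)$ by Definition \ref{Defn_6.1}, while $\sigma_{i+1}$ is a facet of $W(\sigma_i)$ distinct from $\sigma_i$. Applying condition (2) of Definition \ref{def1} to $W(\sigma_i)$, which already admits $\sigma_i$ as a facet with $f(\sigma_i) \geq f(W(\sigma_i))$, forces every other facet to take a strictly smaller $f$-value; hence $f(\sigma_{i+1}) < f(W(\sigma_i)) \leq f(\sigma_i)$.

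For the converse, I would introduce the \emph{modified Hasse diagram} $\mathcal{H}_W$: a directed graph on the simplices of $K$ carrying an edge $\tau \to \sigma$ for every covering relation $\tau > \sigma$ with $W(\sigma) \neq \tau$, together with a reversed edge $\sigma \to W(\sigma)$ whenever $W(\sigma) \neq 0$. Directly unpacking Definition \ref{Defn_9.2}, non-stationary closed $W$-paths correspond bijectively to directed cycles in $\mathcal{H}_W$, so the hypothesis exactly says that $\mathcal{H}_W$ is acyclic. I would then fix a linear extension $\prec$ of its reachability partial order.

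To build a self-indexing $f$, set $f(\sigma) = p$ on every critical $p$-simplex. For each matched pair $\{\sigma, \tau = W(\sigma)\}$ with $\dim \sigma = p$, pick a positive integer $a(\sigma)$ encoding the $\prec$-rank of $\sigma$ among the $p$-dimensional matched simplices, and set $f(\sigma) = p + \varepsilon\, a(\sigma)$, $f(\tau) = p + \varepsilon\, a(\sigma) - \varepsilon^2$, with $\varepsilon > 0$ chosen small enough that all such values lie in $(p - 1/2, p + 1/2)$ uniformly in $p$. Verifying Definition \ref{def1} reduces to the following observation: whenever $\sigma < \tau$ satisfies $f(\tau) \leq f(\sigma)$, one must have $\tau = W(\sigma)$, since otherwise the (non-reversed) edge $\tau \to \sigma$ in $\mathcal{H}_W$ would yield $\tau \prec \sigma$ and hence $f(\tau) > f(\sigma)$. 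The main obstacle lies precisely in this calibration: a plain topological sort immediately furnishes a discrete Morse function whose gradient is $W$, but pinning every critical $p$-simplex to the integer value $p$ requires that the matched-pair perturbations be nested inside disjoint intervals around consecutive integers, which is exactly what the scaling by $\varepsilon$ accomplishes.
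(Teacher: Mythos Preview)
The paper does not supply a proof of this statement; it is quoted from Forman's paper \cite{F1} as Theorem~9.3 and used as a black box throughout. So there is no argument in the present paper to compare yours against.

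That said, your outline is the standard one (essentially Forman's own, rephrased through acyclic matchings on the Hasse diagram), and the ``only if'' direction is correct. In the ``if'' direction, however, your linear extension is oriented the wrong way for the formula you chose. Take a $p$-simplex $\sigma$ with $W(\sigma)\neq 0$ and a $(p{+}1)$-simplex $\tau = W(\mu)$ for another $p$-simplex $\mu$, with $\sigma < \tau$ but $\tau \neq W(\sigma)$. In $\mathcal{H}_W$ one has the directed chain $\mu \to \tau \to \sigma$, so under your stated convention $\mu \prec \tau \prec \sigma$ and hence $a(\mu) < a(\sigma)$; but then
\[
f(\tau) \;=\; p + \varepsilon\, a(\mu) - \varepsilon^{2} \;<\; p + \varepsilon\, a(\sigma) \;=\; f(\sigma),
\]
the opposite of the inequality $f(\tau) > f(\sigma)$ that you need. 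The repair is immediate---reverse the topological order, or replace $a$ by its co-rank---but as written the verification does not go through in this case. A smaller imprecision: the correspondence between non-stationary closed $W$-paths and directed cycles in $\mathcal{H}_W$ is not literally a bijection, though the equivalence you actually use (absence of the former $\Leftrightarrow$ acyclicity of the latter) is correct once one notes that two ``up'' edges cannot occur consecutively, forcing any cycle to alternate between two adjacent dimensions.
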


A Morse function given by Theorem \ref{Thm_9.3} is said to be self-indexing. We finally recall that the critical points of a discrete Morse function on a finite simplicial complex span a chain complex which computes its homology, see Theorem $7.3$ of \cite{F1}.

 \subsection{Compatible discrete vector fields}\label{SSect_VF}
 
 We are now going to prove that a Morse tiled set carries discrete vector fields compatible with the tiling, since every Morse tile carries discrete vector fields, see Remark \ref{RemCW}.
 
 \begin{proposition}\label{Prop_DiscreteMorse}
 For every $n\geq 0,$ every $k\in\{0,\ldots, n+1\}$
 and every decomposition $T^n_k=T^{n-1}_k\sqcup\ldots\sqcup T^{n-1}_n\sqcup T^n_{n+1}$ given by Proposition \ref{Prop_4.1}, the tile $T^n_k$ has a discrete vector field $W^n_k$ such that $W^n_k(T^{n-1}_n)=T^n_{n+1}$ and such that for every $l\in \{k\ldots, n-1\}$ the restriction of $W^n_k$ to $T^{n-1}_l$ coincides with one vector field  $W^{n-1}_l$. Such a vector field has no critical point if $0<k<n+1$, a unique critical point of index zero if $k=0$ and a unique critical point of index $n$ if $k=n+1$. \end{proposition}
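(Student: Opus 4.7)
The plan is to construct $W^n_k$ by induction on $n$, using the decomposition of $T^n_k$ supplied by Proposition~\ref{Prop_4.1} and pairing the unique open $n$-cell $T^n_{n+1}$ with the unique open $(n-1)$-facet $T^{n-1}_n$ appearing in that decomposition.

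For the base case $n=0$, the tile $T^0_k$ consists of a single vertex for both $k=0$ and $k=1$; I set $W^0_k=0$ on it, so that vertex is the unique critical cell, of index $0$, matching the statement for both $k=0$ and $k=n+1=1$. For the inductive step, fix $n\geq 1$ and $k\in\{0,\ldots,n+1\}$. If $k=n+1$, then $T^n_{n+1}$ is a single open $n$-cell and setting $W^n_{n+1}=0$ yields one critical cell of index $n$, with nothing to pair. If $k\leq n$, Proposition~\ref{Prop_4.1} furnishes the decomposition
\[
T^n_k=T^n_{n+1}\sqcup T^{n-1}_k\sqcup T^{n-1}_{k+1}\sqcup\ldots\sqcup T^{n-1}_n,
\]
and I define $W^n_k$ by three clauses: (i) the ``top pairing'' $W^n_k(T^{n-1}_n)=T^n_{n+1}$; (ii) on each piece $T^{n-1}_l$ with $l\in\{k,\ldots,n-1\}$, let $W^n_k$ coincide with an inductively chosen compatible vector field $W^{n-1}_l$ provided by the induction hypothesis applied to $T^{n-1}_l$ with its decomposition inherited from the one of $T^n_k$; (iii) $W^n_k=0$ on every remaining cell, including $T^n_{n+1}$ itself.

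I would then verify Forman's axioms of Definition~\ref{Defn_9.1} in the extended sense of Remark~\ref{RemCW}. Conditions (1) and (2) are immediate: the top pair consists of an open $(n-1)$-facet of $\Delta_n$ and the open $n$-cell whose closure contains it, while the inductive pairings raise dimension by one and respect the face relation by the induction hypothesis. Conditions (3) and (4) follow from the fact that the pieces $T^{n-1}_l$ lie in pairwise disjoint $(n-1)$-facets of $\Delta_n$, so the inductive vector fields $W^{n-1}_l$ do not interact with each other or with the top pairing; in particular, $T^n_{n+1}$ lies outside the domain of $W^n_k$ and receives the single preimage $T^{n-1}_n$. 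Counting critical cells is then handled by the induction hypothesis: the top pairing removes both $T^{n-1}_n$ and $T^n_{n+1}$; for each $l\in\{k,\ldots,n-1\}$ one has $l<n$, so $W^{n-1}_l$ contributes no critical cell when $l>0$ and exactly one critical cell of index $0$ when $l=0$. Thus $W^n_k$ has exactly one critical cell of index $0$ when $k=0$ and no critical cell when $0<k\leq n$, which together with the already-handled case $k=n+1$ completes the inductive step.

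The main obstacle, more apparent than real, is checking that the inductive pairings on different facets of $\Delta_n$ can be assembled together with the single top pairing without creating conflicts or double preimages. This is essentially a disjointness observation: since by Proposition~\ref{Prop_4.1} the pieces $T^{n-1}_l$ for $l\in\{k,\ldots,n\}$ are open subsets of distinct $(n-1)$-faces of $\Delta_n$, every pair produced by $W^n_k$ is confined either to a single piece $T^{n-1}_l$ (where Forman's axioms are inherited from the induction hypothesis) or to the single top pair, and the cell $T^{n-1}_n$ used by the top pairing is itself the entire open $(n-1)$-face $T^{n-1}_n$, hence receives no further pairing. With this observation, the verification reduces to a direct application of the induction hypothesis piece by piece.
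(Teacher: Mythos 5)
Your proof is correct and follows essentially the same argument as the paper: induction on $n$ with the same decomposition from Proposition~\ref{Prop_4.1}, the pairing $W^n_k(T^{n-1}_n)=T^n_{n+1}$, the inductively chosen fields $W^{n-1}_l$ on the lower-dimensional pieces, and the same critical-cell count. Your explicit verification of Forman's axioms via disjointness of the facets is a detail the paper leaves implicit, but it is not a different approach.
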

 
 \begin{proof}
We proceed by induction on $n$. If $n=0$, we set $W^n_k=0$ for every $k\in\{0,1\}$ and the result holds true.
Let us suppose that the result is proved up to the dimension $n-1$ and prove it for the dimension $n$. Let then $k\in \{0,\ldots, n+1\}$ and a decomposition $T^n_k=T^{n-1}_k\sqcup\ldots\sqcup T^{n-1} _n\sqcup T^n_{n+1}$ be chosen (given by Proposition \ref{Prop_4.1}). If $k=n+1,$ we set $W^n_{n+1}=0$  and the tile $T^{n}_{n+1}$ is critical of index $n$ since it has no facet. Otherwise, we set $W^n_k(T_n^{n-1})=T^n_{n+1}$ and for every $l\in\{k,\ldots, n-1\}$ we set the restriction of $W^{n}_k$ to the tile isomorphic to  $T^{n-1}_l$ to be $W^{n-1}_l$ through such an isomorphism.
By the induction hypothesis, it has no critical point, unless $k=0$ where it has a unique critical point of index zero.
 \end{proof}
 
 Proposition \ref{Prop_DiscreteMorse} defines many discrete vector fields on the tile $T^n_k$, $n\geq 0$,  $k\in\{0,\ldots, n+1\}$, which have all been denoted by $W^n_k$. Indeed, such a vector field depends
 on the choice of a partition $T^n_k=T^{n-1}_k\sqcup\ldots\sqcup T^{n-1}_n\sqcup T^n_{n+1}$, but also on a similar choice of a partition of the $(n-1)$-dimensional tiles $T^{n-1}_k , \ldots , T^{n-1}_{n-1}$
 and by induction, on such a choice of an $h$-tiling on all skeletons of $T^n_k$, compare subsection \ref{Ssect_htiling}. In particular, for every face $\tau$ of $\Delta_n$ not contained in $\sigma_1 \cup \dots \cup \sigma_k$, 
 where $T^n_k = \Delta_n \setminus (\sigma_1 \cup \dots \cup \sigma_k)$ and $\dim \tau = l \in \{k , \dots , n-2 \}$, we may choose these partitions in such a way that $\tau \setminus (\sigma_1 \cup \dots \cup \sigma_k)$
 is a basic tile of order $k$ of the $l$-skeleton of $T^n_k$, which is thus preserved by $W^n_k$. Such a vector field $W^n_k$ then restricts to a discrete vector field on the complement 
 $T^{n,l}_k = T^{n}_k \setminus T^{l}_k$.
 
  \begin{corollary}\label{Cor_DiscreteMorse}
 For every $n\geq 0$ and every $k \in \{0,\ldots, n\},$ the critical Morse tile $C^n_k$ inherits from any vector field given by Proposition \ref{Prop_DiscreteMorse} a discrete vector field which has a unique critical point of index $k$. Moreover, for every $0\leq k < l + 1 \leq n$, the standard regular Morse tile $T_k^{n,l}$ inherits from any vector field  given by Proposition \ref{Prop_DiscreteMorse} which preserves $T^{l}_k \subset T^n_k$ a discrete vector field without any critical point.
 \end{corollary}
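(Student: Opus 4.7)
The plan is to separate the two statements and track, in each case, how removing a sub-tile from the standard basic tile $T^n_k$ affects the critical set of the compatible vector field $W^n_k$ produced by Proposition~\ref{Prop_DiscreteMorse}.

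\textbf{Critical tiles.} For $C^n_k = T^n_k \setminus T^{k-1}_k$ with $0 \leq k \leq n$, I would first dispose of $k = 0$, where $T^{-1}_0 = \emptyset$ by convention so that $C^n_0 = T^n_0$ and the conclusion is given directly by Proposition~\ref{Prop_DiscreteMorse}. For $0 < k \leq n$, I would follow the recursive definition of $W^n_k$ down the nested chain $T^k_k \subset T^{k+1}_k \subset \ldots \subset T^n_k$ of basic tiles of order $k$ to conclude that the restriction of $W^n_k$ to $T^k_k$ agrees with $W^k_k$. Since $T^k_k = T^{k-1}_k \sqcup T^k_{k+1}$, this gives $W^n_k(T^{k-1}_k) = T^k_{k+1}$. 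By Proposition~\ref{Prop_UniqueTile}, the $(k-2)$-skeleton of $T^n_k$ is empty, so $T^{k-1}_k$ cannot be in the image of $W^n_k$. Removing $T^{k-1}_k$ therefore only modifies the vector field by pulling $T^k_{k+1}$ out of the image, making it a critical point of index $k$, while leaving every other value of $W^n_k$ untouched. Since $W^n_k$ had no critical point on $T^n_k$ (as $0 < k < n+1$), this provides the unique critical point of index $k$ on $C^n_k$.

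\textbf{Regular tiles.} For $T^{n,l}_k = T^n_k \setminus T^l_k$ with $0 \leq k \leq l \leq n-1$, I would start by noting that the hypothesis ``$W^n_k$ preserves $T^l_k$'', together with the recursive compatibility of the partitions used to build $W^n_k$, amounts to the statement that $T^l_k$ is $W^n_k$-invariant in both directions: no image of a simplex in $T^l_k$ lies outside $T^l_k$, and no simplex outside $T^l_k$ maps into it. Hence $W^n_k$ restricts to a discrete vector field on $T^{n,l}_k$, and the critical points of this restriction are exactly those of $W^n_k$ lying outside $T^l_k$. By the nested construction the restriction of $W^n_k$ to $T^l_k$ coincides with $W^l_k$, and Proposition~\ref{Prop_DiscreteMorse} then tells us that $W^n_k$ on $T^n_k$ and $W^l_k$ on $T^l_k$ carry the same number of critical points in each degree: none if $k > 0$, and a single index $0$ critical vertex (the same one, by nesting) if $k = 0$. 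The difference is zero in both cases, so $W^n_k$ restricted to $T^{n,l}_k$ has no critical point.

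\textbf{Main obstacle.} The delicate point is the bookkeeping in the first part: one must simultaneously verify that $T^{k-1}_k$ really is sent to $T^k_{k+1}$ by the full $W^n_k$ and that no lower-dimensional simplex points to $T^{k-1}_k$, both of which rest on unwinding the recursion of Proposition~\ref{Prop_DiscreteMorse} together with the skeleton descriptions of Propositions~\ref{Prop_UniqueTile} and~\ref{Prop_JSkeletonM}. Once this is in place, the second part is a short counting argument.
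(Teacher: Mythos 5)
Your proof is correct and takes essentially the same route as the paper: identify, via Proposition~\ref{Prop_UniqueTile} and the recursive construction of Proposition~\ref{Prop_DiscreteMorse}, the unique order-$k$ tile $T^k_k=T^{k-1}_k\sqcup T^k_{k+1}$ with $W^n_k(T^{k-1}_k)=T^k_{k+1}$, restrict the vector field to the complement of the removed sub-tile, and count critical points. The extra details you supply (the emptiness of the $(k-2)$-skeleton ensuring nothing maps into $T^{k-1}_k$, and the fact that for $k=0$ the unique index-zero critical vertex lies in $T^l_0$ by nesting) only spell out what the paper leaves implicit.
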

 
 \begin{proof}
By Proposition \ref{Prop_UniqueTile}, the $k$-skeleton of $T^n_k$ is tiled by a unique tile $T^k_k=T^{k-1}_k\sqcup T^k_{k+1}$ and by Proposition \ref{Prop_DiscreteMorse}$, W^n_k(T_k^{k-1})=T^k_{k+1},$ for any vector filed $W^n_k$ given by this proposition. Thus, $W^n_k$ induces a discrete vector field  on $C^n_k=T^n_k\setminus T^{k-1}_k,$ just by restriction. The tile $T^k_{k+1}\subset C^n_k$ is then critical since it  is no more in the image of $W^n_k,$ so that this vector field on $C^n_k$ has a unique critical point of index $k$. Likewise, the vector field $W^n_k$ of $T^n_k$ preserves $T^{l}_k$ and thus restricts to a vector field on $T_k^{n,l}=T^n_k\setminus T_k^{l}$. By Proposition \ref{Prop_DiscreteMorse}, it has no critical point.
 \end{proof}

\begin{definition}
Let $S$ be a Morse tiled set. A discrete vector field on $S$ is said to be compatible with the tiling iff it preserves the tiles and its restriction to each tile is given by Proposition \ref{Prop_DiscreteMorse} or Corollary \ref{Cor_DiscreteMorse} via some affine isomorphism.
\end{definition}

We deduce the following.
 
 \begin{theorem}\label{Thm_DiscreteMorse}
 Let $K$ be a finite simplicial complex equipped with a Morse  tiling. Then, the critical points of any discrete vector field compatible with the tiling are in one-to-one correspondence with the critical tiles, preserving the index. If moreover such a vector field has no non-stationary closed paths, then it is the gradient vector field of a self-indexing discrete Morse function on $K$ whose critical points are in one-to-one correspondence, preserving the index, with the critical tiles of the tiling.
  \end{theorem}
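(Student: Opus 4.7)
The plan is to reduce the global statement to the local counts given by Proposition \ref{Prop_DiscreteMorse} and Corollary \ref{Cor_DiscreteMorse}, and then invoke Forman's Theorem \ref{Thm_9.3}. Let $W$ be a discrete vector field on $K$ compatible with the Morse tiling. By definition $W$ preserves each tile, so for every open simplex $\sigma$ lying in a tile $T$ we have $W(\sigma)=0$ or $W(\sigma)\in T$, and no simplex outside $T$ maps into $T$ under $W$. Since every Morse tile is a union of open simplices and the tiling partitions $K$, it follows that the set of critical simplices of $W$ on $K$ decomposes as the disjoint union, over all tiles $T$ of the tiling, of the critical simplices of the restriction $W\vert_T$.

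Next, I would apply the local classification of Corollary \ref{Cor_DiscreteMorse}. On a critical tile $C^n_k$ the restriction $W\vert_{C^n_k}$ has a single critical simplex, namely the open $k$-simplex $T^k_{k+1}\subset C^n_k$, whose dimension (hence index) is $k$, matching the index of $C^n_k$. On a regular tile, either basic of intermediate order $0<k<n+1$ (by Proposition \ref{Prop_DiscreteMorse}) or non-basic $T^{n,l}_k$ with $k\leq l\leq n-1$ (by Corollary \ref{Cor_DiscreteMorse}), the restriction has no critical simplex at all. Combining these with the previous decomposition yields the asserted index-preserving bijection between critical simplices of $W$ on $K$ and critical tiles of the tiling, establishing the first assertion.

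For the second assertion, I would assume additionally that $W$ has no non-stationary closed paths. Forman's Theorem \ref{Thm_9.3} then supplies a self-indexing discrete Morse function $f:K\to\R$ with $W_f=W$, whose critical simplices are by definition those of $W$. The bijection from the first part therefore identifies the critical points of $f$ with the critical tiles of the tiling; the self-indexing property $f(\sigma)=\dim\sigma$ on critical simplices matches the dimension of each critical simplex with the index of the corresponding critical tile. The only real content beyond bookkeeping is the compatibility-driven localization of critical-point counting to individual tiles in the first paragraph; the rest is an immediate citation of Forman's theorem together with the local counts already established in the previous subsection. No genuine obstacle arises here, since the nontrivial work of producing vector fields with no non-stationary closed paths is deferred to Theorem \ref{Thm_DescreteMorseFiltration}, not to this statement.
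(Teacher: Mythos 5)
Your argument is correct and follows essentially the same route as the paper: the compatibility condition (tile preservation) localizes the count of critical simplices to the individual tiles, the local counts of Proposition \ref{Prop_DiscreteMorse} and Corollary \ref{Cor_DiscreteMorse} give exactly one critical simplex of dimension $k$ in each critical tile $C^n_k$ and none in the regular tiles, and Theorem \ref{Thm_9.3} supplies the self-indexing Morse function when there are no non-stationary closed paths. Your version merely makes the tile-by-tile localization of critical points explicit, which the paper leaves implicit.
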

 
 \begin{proof}
By Definition \ref{Defn_Morse tiling}, the Morse  tiling on $K$ provides a partition of $K$ by Morse tiles. The vector fields given by Proposition \ref{Prop_DiscreteMorse} and Corollary \ref{Cor_DiscreteMorse} thus induce  discrete vector fields on $K$ whose critical points are in one-to-one correspondence with the critical Morse tiles, preserving the index. Now, Theorem \ref{Thm_9.3}
guarantees that such a vector field is the gradient vector field of some discrete self-indexing Morse function on $K$ provided that it has no  non-stationary path. \end{proof}

We finally provide a criterium which ensures that a compatible discrete vector field has no non-stationary closed path. This criterium given by Theorem \ref{Thm_DescreteMorseFiltration} applies to Morse shellings, see Definition \ref{Morseshellable} and Corollary \ref{Cor_Mshellable}.

\begin{lemma}\label{Lemmaclosed}
Every discrete vector field given by Proposition \ref{Prop_DiscreteMorse} or Corollary \ref{Cor_DiscreteMorse} has no non-stationary closed path in the corresponding Morse tile. 
\end{lemma}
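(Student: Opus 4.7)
I plan to prove the lemma by induction on the dimension $n$ of the Morse tile, the base case $n = 0$ being vacuous. For the inductive step, I fix a vector field $W^n_k$ on $T^n_k$ given by Proposition \ref{Prop_DiscreteMorse} and consider a hypothetical non-stationary $W^n_k$-path $\gamma = \sigma_0, \ldots, \sigma_r$ of some dimension $p$. If $p = n$, the unique $n$-simplex $T^n_{n+1}$ lies in the image of $W^n_k$, so $W^n_k(T^n_{n+1}) = 0$ and no non-stationary step is possible. If $p = n - 1$, the only $(n-1)$-simplex with nonzero image is the one realizing $T^{n-1}_n$, whose image is $T^n_{n+1}$; any facet $\sigma_{i+1}$ of $T^n_{n+1}$ different from $\sigma_i$ lies in some $T^{n-1}_j$ with $j < n$, on which $W^n_k$ vanishes, so the path becomes stationary from $\sigma_{i+1}$ onward and cannot return to $\sigma_0$.

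The substantial case is $p \leq n - 2$. Each such $p$-simplex belongs to exactly one tile $T^{n-1}_{j_i}$ with $j_i \in \{k, \ldots, n-1\}$ of the partition, since $T^{n-1}_n$ is an open $(n-1)$-simplex and contains no lower-dimensional face. Write $\tau_j$ for the $(n-1)$-face of $\Delta_n$ carrying the tile $T^{n-1}_j$. The key structural fact I extract from Proposition \ref{Prop_4.1} is that the partition of $T^n_k$ assigns each simplex $\sigma \subset T^n_k$ to the tile $T^{n-1}_j$ whose index $j$ is the smallest one with $\sigma \subset \tau_j$. Since $W^n_k(\sigma_i) = W^{n-1}_{j_i}(\sigma_i)$ is contained in $T^{n-1}_{j_i} \subset \tau_{j_i}$, every facet $\sigma_{i+1}$ is contained in $\tau_{j_i}$ as well, forcing $j_{i+1} \leq j_i$. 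A closed path then forces $j_i$ to be constant, so $\gamma$ lives entirely in a single tile $T^{n-1}_j$ on which $W^n_k$ restricts to $W^{n-1}_j$; the inductive hypothesis applied to $W^{n-1}_j$ contradicts the existence of such a $\gamma$.

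The remaining cases of $C^n_k$ and $T^{n,l}_k$ reduce immediately to the previous one: by Corollary \ref{Cor_DiscreteMorse} their compatible vector fields are restrictions of suitable vector fields $W^n_k$ on $T^n_k$, and any $W$-path in the smaller tile is a fortiori a $W^n_k$-path in $T^n_k$, so the conclusion follows. The main technical obstacle I foresee is the precise justification of the ``smallest-index facet'' rule that governs the partition, which should follow from a careful unwinding of the iterated cone construction underlying Proposition \ref{Prop_4.1}, together with the fact that the removed facets $\sigma_1, \ldots, \sigma_k$ of $\Delta_n$ fall outside $T^n_k$ entirely.
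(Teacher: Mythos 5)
Your proof is correct, and its skeleton is the same as the paper's: reduce to basic tiles by restriction, induct on the dimension of the tile, and treat paths of dimension $n$ and $n-1$ by the same direct observations. Where you genuinely differ is the case $p\le n-2$. The paper simply asserts that any such path is contained in a single tile of the boundary partition $T^{n-1}_k\sqcup\ldots\sqcup T^{n-1}_{n-1}$ and invokes the induction hypothesis; you instead prove that the order $j_i$ of the tile containing $\sigma_i$ is non-increasing along a path, so that only a \emph{closed} path is forced to stay in one tile. This refinement is not cosmetic: taken literally, the containment claim can fail. For instance, in $T^2_0=\Delta_2$ with partition $T^1_0\sqcup T^1_1\sqcup T^1_2\sqcup T^2_3$, the $0$-dimensional path starting at the vertex of $T^1_1$ is sent by $W$ to the edge carrying $T^1_1$ and then steps to the opposite endpoint, which lies in $T^1_0$; what is true, and what your monotonicity argument establishes, is that a path can only descend to tiles of smaller order and hence a closed one never leaves its tile. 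Your ``smallest-index facet'' rule is exactly the structure provided by Proposition \ref{Prop_4.1}, where $T^{n-1}_j$ is a facet of $\Delta_n$ minus the union of the facets carrying the earlier tiles, so the technical point you flag does go through, and with it $j_{i+1}\le j_i$ and the conclusion. Two harmless imprecisions to fix in writing it up: $W^n_k$ vanishes on the $(n-1)$-dimensional cell of each tile $T^{n-1}_j$ with $j<n$, not on the whole tile; and in the monotonicity step you should record separately that $\sigma_{i+1}=\sigma_i$, hence $j_{i+1}=j_i$, when $W(\sigma_i)=0$.
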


 \begin{proof}
 It is enough to prove the result for a basic tile $T^n_k$ equipped with  a discrete vector field $W^n_k$ given by Proposition \ref{Prop_DiscreteMorse}, since vector fields given by Corollary \ref{Cor_DiscreteMorse} on non-basic Morse tiles are restriction of the formers, so that every path on a non-basic Morse tile is also a path on the corresponding basic tile, with the exception of the stationary path at the critical point  in the case of a critical Morse tile. We then prove the result by induction on the dimension $n$ of the tile. If $n=0$, there is nothing to prove, every path is stationary. Otherwise, let us choose a partition $T^n_k=T^{n-1}_k\sqcup\ldots\sqcup T^{n-1}_n\sqcup T^n_{n+1}$  given by Proposition \ref{Prop_4.1}  and an associated discrete vector field $W^n_k$. A path of dimension $n$ of $W^n_k$ is stationary, since $W^n_k (T^n_{n+1})$ has to vanish. 
 A path of dimension $n-1$ which begins with $T^{n-1}_n$ continues in one of the tiles $T^{n-1}_k, \ldots , T^{n-1}_{n-1}$ and is then stationary as in the previous case. Any other path is contained in one of the tiles $T^{n-1}_k,\ldots,T^{n-1}_{n-1}$, so that the result follows from the induction hypothesis. 
 \end{proof}

 \begin{theorem}\label{Thm_DescreteMorseFiltration}
 Let $K_0\subset K_1\subset\ldots\subset K_N=K$ be a filtration of Morse tiled finite simplicial complexes such that for every $i\in \{1,\ldots, N\},$ $K_i\setminus K_{i-1}$ is a single Morse tile. Let $W$ be a compatible discrete vector field on $K$ such that its restriction to $K_0$ has no non-stationary closed path. Then, $W$ has no non-stationary closed path and it is the gradient vector field  of a discrete self-indexing Morse function on $K$.
 \end{theorem}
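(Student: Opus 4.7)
The plan is to establish the absence of non-stationary closed $W$-paths by induction on $i \in \{0, 1, \ldots, N\}$, proving at each step that $W|_{K_i}$ has no such path. The base case $i=0$ is the hypothesis. For the inductive step, set $T = K_i \setminus K_{i-1}$, which is a single Morse tile by assumption, and let $\gamma = \sigma_0, \sigma_1, \ldots, \sigma_r$ be a non-stationary closed $W$-path in $K_i$.

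The key intermediate observation I would establish first is that any $W$-path meeting $K_{i-1}$ stays in $K_{i-1}$ thereafter. Indeed, suppose $\sigma_j \in K_{i-1}$. Since $W$ is compatible with the Morse tiling, $\sigma_j$ belongs to some tile $T'$ and $W(\sigma_j)$ lies in the same tile $T'$. Because $T'$ was introduced at some stage $j' \leq i-1$ of the filtration, one has $T' \subset K_{i-1}$, hence $W(\sigma_j)$ is a simplex of the simplicial subcomplex $K_{i-1}$. As $K_{i-1}$ is a subcomplex, every face of $W(\sigma_j)$, and in particular its facet $\sigma_{j+1}$, also lies in $K_{i-1}$. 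Iterating, all simplices $\sigma_{j+1}, \sigma_{j+2}, \ldots$ remain in $K_{i-1}$.

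Applied to the closed path $\gamma$, this lemma forces a dichotomy: either $\gamma$ never enters $K_{i-1}$, in which case it lies entirely in $T$; or it meets $K_{i-1}$ at some $\sigma_j$, in which case every subsequent simplex up to $\sigma_r = \sigma_0$ is in $K_{i-1}$, so $\sigma_0 \in K_{i-1}$ and, reapplying the lemma from $\sigma_0$, the entire path lies in $K_{i-1}$. The first alternative contradicts Lemma \ref{Lemmaclosed} applied to the compatible restriction of $W$ to the Morse tile $T$, while the second alternative contradicts the induction hypothesis on $K_{i-1}$. This completes the induction, so $W$ has no non-stationary closed paths on $K = K_N$. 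The conclusion that $W$ is the gradient vector field of a discrete self-indexing Morse function on $K$ then follows immediately from Theorem \ref{Thm_9.3}.

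The only delicate step is the intermediate observation, and the potential pitfall lies in making sure that $W(\sigma_j)$, defined a priori only as a simplex whose relative interior lies in the same tile as $\sigma_j$, is indeed a simplex of the subcomplex $K_{i-1}$, so that subcomplex-closedness can be invoked on its facets. This is precisely where the fact that each tile of the filtration is entirely contained in some $K_{j'} \subset K_{i-1}$ is used; once this point is firmly established, everything else is a short combinatorial argument combining Lemma \ref{Lemmaclosed} with the cyclic structure of a closed path.
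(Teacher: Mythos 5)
Your proof is correct and follows essentially the same route as the paper: induction along the filtration, Lemma \ref{Lemmaclosed} to rule out closed paths inside the new tile $K_i\setminus K_{i-1}$, the observation that a path entering $K_{i-1}$ can never leave it, and finally Theorem \ref{Thm_9.3}. The only difference is that you spell out (via compatibility of $W$ with the tiling and the subcomplex property of $K_{i-1}$) the step the paper dismisses as holding ``by definition,'' which is a welcome clarification rather than a deviation.
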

 
 \begin{proof}
 We prove the result by induction on $i\in \{0,\ldots, N\}.$ If $i=0$, the result holds true by hypothesis. Let $i>0$ and $W$ be a discrete vector field on $K$ compatible with the Morse tiling and whose restriction to $K_0$ has no non-stationary closed path. Then, $K_i\setminus K_{i-1}$ is reduced to a single Morse tile and by Lemma \ref{Lemmaclosed}, it has no non-stationary closed path. Now, a $W$-path on $K_i$ is either contained in $K_i\setminus K_{i-1}$, or it meets $K_{i-1}$ and cannot leave $K_{i-1}$ once it entered in this subcomplex by definition. In both cases, from the induction hypothesis, it cannot have any non-stationary closed path. Hence the result.
 \end{proof}

\begin{corollary}\label{Cor_Mshellable}
Every  discrete vector field compatible with a Morse shelling of a finite simplicial complex is the gradient vector field of a discrete self-indexing Morse function. Moreover its critical points are in one-to-one correspondence  with the critical tiles of the shelling, preserving their indices.
\end{corollary}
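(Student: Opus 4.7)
The plan is to recognize that this corollary is the direct assembly of two previously established results, namely Theorem \ref{Thm_DescreteMorseFiltration} and Theorem \ref{Thm_DiscreteMorse}, applied to the particular filtration coming from Definition \ref{Morseshellable}. So the work is essentially bookkeeping: verifying that the hypotheses of Theorem \ref{Thm_DescreteMorseFiltration} are met for free in the shellable case, and then invoking Theorem \ref{Thm_DiscreteMorse} to identify the critical points.

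First, I would unfold Definition \ref{Morseshellable}: a Morse shelling on $K$ gives a filtration $\emptyset = K_0 \subset K_1 \subset \ldots \subset K_N = K$ by Morse tiled subcomplexes such that each $K_i \setminus K_{i-1}$ is a single Morse tile. The key observation is that $K_0 = \emptyset$, so the hypothesis in Theorem \ref{Thm_DescreteMorseFiltration} that the restriction of the compatible vector field $W$ to $K_0$ has no non-stationary closed path is satisfied vacuously. Therefore Theorem \ref{Thm_DescreteMorseFiltration} applies and $W$ has no non-stationary closed path on $K$ and is the gradient vector field of a discrete self-indexing Morse function $f$ on $K$.

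Second, I would appeal to Theorem \ref{Thm_DiscreteMorse} (which in turn rests on Proposition \ref{Prop_DiscreteMorse} and Corollary \ref{Cor_DiscreteMorse}): since $W$ is compatible with the Morse tiling underlying the shelling, its critical points are in bijection with the critical Morse tiles of the shelling, preserving the index. Combining the two steps gives both conclusions of the corollary.

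There is no real obstacle here; the genuine content has already been placed in Theorem \ref{Thm_DescreteMorseFiltration} (using Lemma \ref{Lemmaclosed} tile by tile, plus the fact that once a $W$-path enters a lower subcomplex it cannot leave) and in Theorem \ref{Thm_DiscreteMorse} (which relies on Forman's Theorem \ref{Thm_9.3}). The only point worth emphasizing explicitly in the write-up is the vacuous base case $K_0 = \emptyset$, since this is exactly what distinguishes a shelling from a general compatible vector field and is what makes the absence of non-stationary closed paths automatic rather than an additional hypothesis.
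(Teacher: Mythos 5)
Your proposal is correct and follows exactly the paper's own argument: apply Theorem \ref{Thm_DescreteMorseFiltration} to the shelling filtration, noting that the hypothesis on $K_0=\emptyset$ holds vacuously, then conclude the correspondence of critical points with critical tiles via Theorem \ref{Thm_DiscreteMorse}. Nothing is missing.
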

 
\begin{proof}
 From Theorem \ref{Thm_DescreteMorseFiltration}, any discrete vector field compatible with any Morse shelling is the gradient vector field of a discrete Morse function, since its restriction to $K_0=\emptyset$ has no non-stationary closed path. The result follows then from Theorem \ref{Thm_DiscreteMorse}.
  \end{proof}
 
\subsection{Proof of Theorem \ref{Thm_intro1}}\label{SSect_intro1}

\begin{proof}
Let $K$ be a finite simplicial complex homeomorphic to a closed surface, which we may assume to be connected. We have to prove that there exists a filtration  $K_1\subset\ldots \subset K_N$ of Morse tiled simplicial complexes such that $K_N=K$ and such that for every $i\in\{1,\ldots, N\},$ $K_i$ is the union of $i$ Morse tiles, see Definition \ref{Morseshellable}. In order to prove the existence of the filtration, we proceed by induction on $i >0$. If $i=1$, we choose any closed simplex in $K$ and declare that $K_1$ is this simplex, tiled by a single critical tile of index 0. Let us assume by induction that we have constructed a tiled subcomplex $K_i$ with $i$ tiles. If there exists an edge $e$ in $K_i$ which is adjacent to only one triangle of $K_i$, we know from the Dehn-Sommerville relations that $K$ contains a triangle $T$ adjacent to $e$ and not contained in $K_i$. Then $T\setminus K_i$ is isomorphic to a triangle deprived of at least one face of dimension one and thus at most one face of codimension  greater than one, so that $T\setminus K_i$ is a Morse tile by Definition \ref{Defn_Morse}. We set $K_{i+1}$ to be the union of $K_i$ and $T$ (together with its faces) and equip it with the Morse tiling given by the one of $K_i$ completed by $T\setminus K_i$. If now all edges of $K_i$ are adjacent to two triangles of $K_i$, let us prove that $K_i=K.$ From the Dehn-Sommerville relations, we know that every edge is adjacent to at most two triangles of $K_i$. We observe that the link of every vertex in $K$ is a triangulated circle, so that the star of a vertex in $K$ is a cone over a polygone, see Figure \ref{Fig_Star}.

\begin{figure}[h]
   \begin{center}
   \includegraphics[scale=0.4]{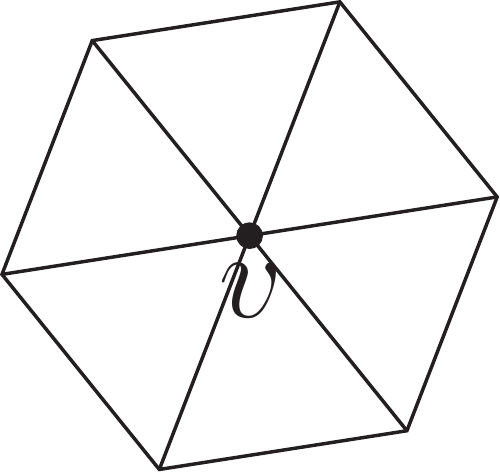}
   \caption{The star of a vertex $v$ in $K$.}
     \label{Fig_Star}
        \end{center}
 \end{figure}
 
 Let $v$ be a vertex in $K$. Since the underlying topological space $|K|$ is connected, there exists a path $v_0,v_1,\ldots, v_k$ such that $v_0\in K_i, v_k=v$ and for every $j\in \{0,\ldots, k-1\},$  $[v_j,v_{j+1}]$ is and edge of $K$.
 Then, by construction, $v_0$ is adjacent to a triangle of $K_i$ and since all edges of $K_i$ are adjacent to two triangles, all triangles adjacent to $v_0$ have to be in $K_i$, see Figure \ref{Fig_Star}. Thus $v_1$ belongs to $K_i$ as well and by induction, $v$ belongs to $K_i$. Hence, $K_i$ contains all vertices of $K$ and also all triangles and edges adjacent to them, so that $K_i=K.$ 
 The proof is similar in dimension 1.   
\end{proof}

The proof of Theorem \ref{Thm_intro1} is algorithmic and the shellings it provides do not use any regular Morse tile of vanishing order.

\subsection{Morse tilings on triangulated handles}\label{SSSect_Mhandles}

 Recall that a handle of index $i$ and dimension $n$ is by definition a product of an $i$-dimensional disk with an $(n-i)$-dimensional one, see \S~6 of \cite{RS}.
 We likewise define a handle of index $i$ in discrete geometry to be the product of simplices ${{\Delta}}_i\times \Delta_{n-i}$, or rather in what follows the product ${\stackrel{\circ}{\Delta}}_i\times \Delta_{n-i}$ of an open simplex of dimension $i$ with a closed $(n-i)$-simplex, suitably triangulated. Our purpose is to define a Morse  shelling on such triangulated $i$-handle for $i=1$ or $n-1$, the general case being postponed to \cite{Welsch}.
 
 \begin{proposition}\label{Prop_Handle}
 For every $n\geq 2$, $\Delta_1\times \Delta_{n-1}$ has a subdivision into $n$ simplices  $\sigma_1,\ldots, \sigma_n$ of dimension $n$ turning it into a shellable simplicial complex. Moreover, writing $\partial \Delta_1=\{0,1\},$ it can be chosen in such a way that for every $i\in \{1,\ldots, n\}$, $\dim(\sigma_i\cap (\{0\}\times\Delta_{n-1}))=n-i$ and $\dim(\sigma_i\cap (\{1\}\times\Delta_{n-1}))=i-1$. For every $i\in\{1,\ldots, n\}$, the subcomplex $K_i^n = \sigma_1\cup\ldots\cup\sigma_i$ inherits the $h$-tiling made of one basic tile of order zero and $i-1$ basic tiles of order  one. 
 \end{proposition}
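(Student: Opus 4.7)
The plan is to exhibit the standard staircase (or Freudenthal) triangulation of the prism $\Delta_1\times\Delta_{n-1}$ and then verify directly that the obvious ordering of its top-dimensional simplices shells it in the sense of Definition \ref{Morseshellable}. Concretely, I would label the vertices of $\Delta_1$ by $\{0,1\}$ and those of $\Delta_{n-1}$ by $v_0,\ldots,v_{n-1}$, so that the vertices of the product are the pairs $(\epsilon,v_j)$, and for every $i\in\{1,\ldots,n\}$ define $\sigma_i$ to be the closed $n$-simplex spanned by
\[
(0,v_0),(0,v_1),\ldots,(0,v_{n-i}),(1,v_{n-i}),(1,v_{n-i+1}),\ldots,(1,v_{n-1}).
\]
This gives $n-i+1$ bottom vertices and $i$ top vertices, so $\sigma_i$ is indeed $n$-dimensional, and by construction $\sigma_i\cap(\{0\}\times\Delta_{n-1})$ is the $(n-i)$-face spanned by $(0,v_0),\ldots,(0,v_{n-i})$ while $\sigma_i\cap(\{1\}\times\Delta_{n-1})$ is the $(i-1)$-face spanned by $(1,v_{n-i}),\ldots,(1,v_{n-1})$, as required.

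The first thing to check is that the $\sigma_i$'s form a simplicial subdivision of $\Delta_1\times\Delta_{n-1}$. This is a classical fact: consecutive $\sigma_i$ and $\sigma_{i+1}$ share precisely the $(n-1)$-face obtained by deleting the unique unmatched vertex on each side, and the union recovers the whole prism (either by induction on $n$ or by a volume computation, each $\sigma_i$ occupying volume $1/n!$ times that of $\Delta_1\times\Delta_{n-1}$).

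The core combinatorial step is to prove the shelling property: for every $i\geq 2$, $\sigma_i\cap(\sigma_1\cup\ldots\cup\sigma_{i-1})$ is a single facet of $\sigma_i$. The key observation is that $(1,v_j)$ belongs to $\sigma_k$ iff $j\geq n-k$, so the vertex $(1,v_{n-i})$ lies in $\sigma_i,\sigma_{i+1},\ldots,\sigma_n$ and in none of $\sigma_1,\ldots,\sigma_{i-1}$. Any face of $\sigma_i$ shared with some $\sigma_j$, $j<i$, must therefore miss $(1,v_{n-i})$, hence lie in the facet $F_i$ of $\sigma_i$ obtained by removing this single vertex. Conversely, $F_i$ has vertices $(0,v_0),\ldots,(0,v_{n-i}),(1,v_{n-i+1}),\ldots,(1,v_{n-1})$, and inspection shows these are all vertices of $\sigma_{i-1}$, so $F_i$ is a face of $\sigma_{i-1}$ and $\sigma_i\cap K_{i-1}^n=F_i$ is of pure dimension $n-1$.

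Invoking Theorem \ref{Lemmashel} then gives the shelling in the sense of Definition \ref{Morseshellable}, and the induced $h$-tiling is read off immediately: $\sigma_1$ contributes a single tile $T^n_0$ of order zero, while for $i\geq 2$ the difference $\sigma_i\setminus K_{i-1}^n=\sigma_i\setminus F_i$ is a closed $n$-simplex deprived of one facet, hence a basic tile $T^n_1$ of order one. Thus $K^n_i$ is $h$-tiled by one tile of order zero together with $i-1$ tiles of order one, as claimed. The only real obstacle is the indexing bookkeeping of which vertices $(\epsilon,v_j)$ lie in which $\sigma_k$; once the staircase labelling is fixed the membership rules $(0,v_j)\in\sigma_k\Leftrightarrow j\leq n-k$ and $(1,v_j)\in\sigma_k\Leftrightarrow j\geq n-k$ make all verifications transparent.
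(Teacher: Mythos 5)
Your proof is correct, and it takes a different route from the paper. You write down the staircase (Freudenthal) triangulation explicitly and verify by the vertex-membership rules that $\sigma_i\cap K^n_{i-1}$ is the single facet $F_i$ opposite $(1,v_{n-i})$, then invoke Theorem \ref{Lemmashel} and read off the $h$-tiling; the paper instead argues by induction on $n$, viewing $\Delta_1\times\Delta_{n-1}$ as a cone with apex a vertex $c$ of $\{0\}\times\Delta_{n-1}$ over $(\Delta_1\times\Delta_{n-2})\cup(\{1\}\times\Delta_{n-1})$, setting $\sigma_i=c\ast\sigma_i'$ and $\sigma_n=c\ast(\{1\}\times\Delta_{n-1})$, and using Proposition \ref{Prop_4.1} (a cone over a basic tile of order one is again of order one) to get the tiling statement directly. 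The two constructions yield the same triangulation; yours is more explicit and self-contained in its combinatorics, while the paper's inductive cone description builds in the identity $K^n_i=c\ast K^{n-1}_i$, which it reuses in the proof of Corollary \ref{Cor_Handle}. Two small points to tighten: your step ``any face of $\sigma_i$ shared with $\sigma_j$ misses $(1,v_{n-i})$, hence lies in $F_i$'' uses that $\sigma_i\cap\sigma_j$ is a common \emph{face} (a convex subset of a simplex avoiding a vertex need not lie in the opposite facet), i.e. exactly the classical subdivision fact you invoke, so that fact should be proved (e.g. by the inductive cone argument, or via the description of $\sigma_i$ by the inequalities $x_{n-i+1}+\cdots+x_{n-1}\le t\le x_{n-i}+\cdots+x_{n-1}$) or given a reference; and the parenthetical volume remark is off --- each $\sigma_i$ has volume $1/n!$, hence a fraction $1/n$ (not $1/n!$) of the prism, and a volume count alone would not exclude overlaps without knowing the interiors are disjoint.
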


 \begin{proof} If $n=2,$ the square $\Delta_1\times \Delta_1$ is the union of two triangles meeting along a diagonal and the result follows.
 If $n>2,$ let $c$ be a vertex of $\{0\}\times \Delta_{n-1}$ so that this simplex  is the cone $c\ast(\{0\}\times \Delta_{n-2})$ over its facet $\Delta_{n-2}$. Then, the convex domain $\Delta_1\times \Delta_{n-1}$ is a cone centered at $c$ over the base $(\Delta_1\times \Delta_{n-2})\cup (\{1\}\times \Delta_{n-1})$. By induction, the lateral part $\Delta_1\times \Delta_{n-2}$ has a subdivision $\sigma'_1\cup\ldots\cup \sigma'_{n-1}$ such that for every $i\in \{1,\ldots, n-1\},$ $\dim(\sigma'_i\cap (\{0\}\times \Delta_{n-2}))=n-1-i$
and $\dim(\sigma'_i\cap (\{1\}\times \Delta_{n-2}))=i-1$ and such that $\sigma'_1\cup\ldots\cup\sigma'_i= T_0^{n-1}\sqcup T^{n-1}_1\sqcup\ldots\sqcup T^{n-1}_1$.
We then set, for every $i \in \{1,\ldots, n-1 \},$ $\sigma_i=c\ast\sigma'_i$ and $\sigma_n=c\ast (\{1\}\times \Delta_{n-1})$, see Figure \ref{Fig_Handle}. The result  follows, since $(\{1\}\times \Delta_{n-1})\setminus (\{1\}\times \Delta_{n-2})$ is isomorphic to $T_1^{n-1}$ and the cone over a basic tile of order one remains a basic tile of order one by Proposition \ref{Prop_4.1}.
 \end{proof}

\begin{figure}[h]
   \begin{center}
       \includegraphics[scale=0.35]{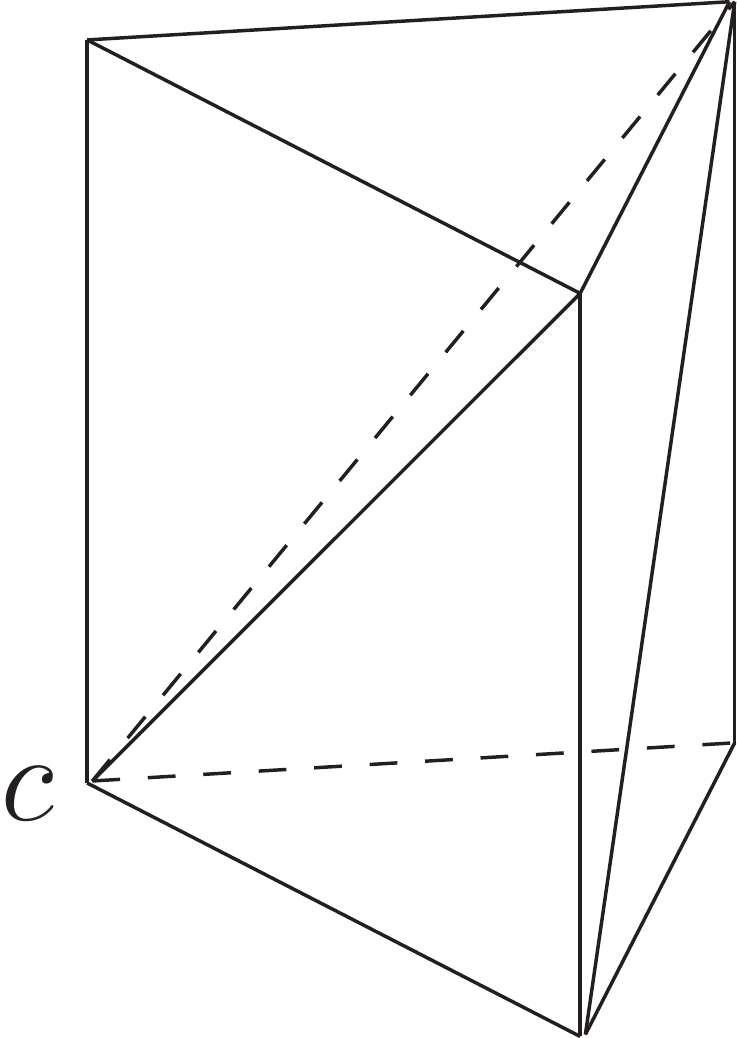}
   \caption{A triangulation on $\Delta_1\times \Delta_2$.}
     \label{Fig_Handle}
        \end{center}
 \end{figure}

\begin{corollary}\label{Cor_Handle}
For every $n\geq 2$, the handles $\stackrel{\circ}{\Delta}_1\times \Delta_{n-1}$, $\Delta_1\times\stackrel{\circ}{\Delta}_{n-1}$ and the product $T^1_1\times\Delta_{n-1}$ inherit from Proposition \ref{Prop_Handle} 
the structure of Morse shellable sets. Moreover, for every $i\in \{1,\ldots, n\}$, the subset $K_i^n \cap (\stackrel{\circ}{\Delta}_1\times \Delta_{n-1})$ gets tiled by a disjoint union $\sqcup_{j=0}^{i-1}T^{n,j}_1$, $K_i^n \cap (\Delta_1\times\stackrel{\circ}{\Delta}_{n-1})$ by one critical tile of index $n-1$ and $i-1$ basic tiles of order $n$ and $K_i^n \cap (T^1_1\times\Delta_{n-1})$ by basic tiles of order one, where $K_i^n$ is the subcomplex given by Proposition \ref{Prop_Handle}. 
\end{corollary}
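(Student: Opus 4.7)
My plan is to leverage the explicit cone-over-cone construction from the proof of Proposition \ref{Prop_Handle} in order to read off, for each $n$-simplex $\sigma_i$, its vertex set and how it meets each of the three subsets, and then to verify Morse shellability directly via the filtration $K_i^n\cap S$. Taking $c=(0,e_0)$ and using the recursion $\sigma_i=c\ast\sigma'_i$ for $i\leq n-1$ together with $\sigma_n=c\ast(\{1\}\times\Delta_{n-1})$, an induction on $n$ first shows that $\sigma_i$ has vertex set
\[
\{(0,e_0),(0,e_1),\ldots,(0,e_{n-i})\}\cup\{(1,e_{n-i}),\ldots,(1,e_{n-1})\},
\]
with $e_{n-i}$ the unique second coordinate hit twice, and that for $i\geq 2$ the attaching facet $\sigma_i\cap K_{i-1}^n$ is the facet of $\sigma_i$ opposite $(1,e_{n-i})$. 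The same induction shows that the top face $\sigma_i\cap(\{0\}\times\Delta_{n-1})$, of dimension $n-i$, sits inside this attaching facet whenever $i\geq 2$, whereas the bottom face $\sigma_i\cap(\{1\}\times\Delta_{n-1})$, of dimension $i-1$, does not.

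With this dictionary in hand, the first and third cases are immediate. For $\stackrel{\circ}{\Delta}_1\times\Delta_{n-1}$, intersecting $\sigma_i\setminus K_{i-1}^n$ with the subset amounts to also removing the top and bottom faces of $\sigma_i$; for $i=1$ one removes a full facet of $\sigma_1$ together with a vertex disjoint from it, producing $T^{n,0}_1$, and for $i\geq 2$ the top face is absorbed into the attaching facet while the bottom face of dimension $i-1$ is not, producing $T^{n,i-1}_1$ (which coincides with $T^n_2$ in the extreme case $i=n$ via the identification $T^{n,n-1}_1=T^n_2$ recalled in Section 2.1). For $T^1_1\times\Delta_{n-1}$ only the top cap is subtracted, which is either the full top facet of $\sigma_1$ or lies inside the attaching facet of $\sigma_i$ for $i\geq 2$, so in each case exactly one facet is stripped and one obtains the basic tile $T^n_1$ of order one.

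The case $\Delta_1\times\stackrel{\circ}{\Delta}_{n-1}$ requires the most bookkeeping and is where the main technical effort lies. Writing $F_j$ for the facet of $\Delta_{n-1}$ opposite $e_j$, the vertex description shows that the slab $\Delta_1\times F_j$ meets $\sigma_i$ in the full facet opposite the unique vertex of second coordinate $e_j$ when $j\neq n-i$, and in the codimension-two face $\sigma_i\setminus\{(0,e_{n-i}),(1,e_{n-i})\}$ when $j=n-i$; the latter lies inside the attaching facet and thus contributes nothing new. For $i=1$ no attaching facet is present, and the $n-1$ slab facets together with the codimension-two face (which lies in none of them) realize $\sigma_1\cap(\Delta_1\times\stackrel{\circ}{\Delta}_{n-1})$ as $T^{n,n-2}_{n-1}=C^n_{n-1}$, the claimed critical tile of index $n-1$. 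For $i\geq 2$, the attaching facet together with the $n-1$ slab facets remove exactly $n$ of the $n+1$ facets of $\sigma_i$, the sole survivor being the facet opposite $(0,e_{n-i})$, so the resulting tile is the basic $T^n_n$ of order $n$.

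Finally, in each of the three settings the filtration $\emptyset\subset K_1^n\cap S\subset\ldots\subset K_n^n\cap S=S$ exhibits $S$ as Morse shellable in the sense of Definition \ref{Morseshellable}: each step adds a single Morse tile just identified, and each intermediate $K_i^n\cap S$ is the trace on $S$ of the subcomplex $K_i^n\subset\Delta_1\times\Delta_{n-1}$, hence a Morse tiled subset per Definition \ref{Defn_Morsesubset}.
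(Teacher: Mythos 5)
Your proof is correct, and it reaches all three tilings by a route that differs in execution from the paper's. You make the staircase triangulation of Proposition \ref{Prop_Handle} fully explicit (vertex sets $(0,e_0),\ldots,(0,e_{n-i}),(1,e_{n-i}),\ldots,(1,e_{n-1})$, attaching facet opposite $(1,e_{n-i})$) and then treat all three products uniformly by counting which facets and which extra face of $\sigma_i$ are deleted by $K_{i-1}^n$, the two caps, and the slabs $\Delta_1\times F_j$. The paper instead argues directly from the abstract dimension counts of Proposition \ref{Prop_Handle} for $T^1_1\times\Delta_{n-1}$ and $\stackrel{\circ}{\Delta}_1\times\Delta_{n-1}$ (noting $\{0\}\times\Delta_{n-1}\subset\sigma_1$ and that $\sigma_i\cap(\{1\}\times\Delta_{n-1})$ is a face of dimension $i-1$ not lying in earlier simplices), and for $\Delta_1\times\stackrel{\circ}{\Delta}_{n-1}$ it runs an induction on $n$ through the cone decomposition $c\ast(\Delta_1\times\Delta_{n-2})\cup c\ast(\{1\}\times\stackrel{\circ}{\Delta}_{n-1})$, invoking Propositions \ref{Prop_ConeMorse} and \ref{Prop_4.1} to identify the critical tile $C^n_{n-1}$ and the tiles $T^n_n$. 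Your coordinate dictionary buys a single direct computation with no further induction on $n$ and no appeal to Proposition \ref{Prop_ConeMorse}, at the cost of first verifying the explicit vertex description from the recursion and of a bit more facet bookkeeping in the $(n-1)$-handle case; the paper's cone argument is coordinate-free and recycles structural results already proved. Two small points you could make explicit: the condition of Definition \ref{Defn_Morse tiling} (the union of tiles of dimension greater than $j$ being a trace of a subcomplex) is automatic here because every tile is $n$-dimensional, and, as in the paper, the statement for $T^1_1\times\Delta_{n-1}$ presupposes that the deleted facet of $\Delta_1$ is the endpoint $\{0\}$, the choice your computation indeed uses.
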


Corollary \ref{Cor_Handle} thus  provides a Morse shelling on the triangulated one-handle $\stackrel{\circ}{\Delta}_1\times \Delta_{n-1}$ (resp. on the triangulated $(n-1)$-handle $\Delta_1\times\stackrel{\circ}{\Delta}_{n-1}$) containing a unique critical tile, of index one (resp. of index $n-1$).

\begin{proof}[Proof of Corollary \ref{Cor_Handle}] By Proposition \ref{Prop_Handle}, for every $i\in \{1,\ldots, n\}$, $K_i^n =\sigma_1\cup\ldots\cup \sigma_i=T^n_0\sqcup T^n_1\sqcup \ldots\sqcup T^n_1$ and $\dim(\sigma_1\cap (\{0\}\times \Delta_{n-1}))=n-1,$ so that $\{0\}\times \Delta_{n-1}$ is contained in $\sigma_1$ and disjoint from the tiles $T^n_1.$ Thus, $K_i^n \cap (T_1^1\times \Delta_{n-1})=K_i^n \setminus (\{0\}\times \Delta_{n-1})$ inherits the $h$-tiling $\big(\sigma_1\setminus (\{0\}\times \Delta_{n-1})\big) \sqcup_{j=2}^i (\sigma_j\setminus K_{j-1}^n)$ made of $i$ basic tiles of order one. The last part of Corollary \ref{Cor_Handle} is proved.
By Proposition \ref{Prop_Handle} now, $\Delta_1\times \Delta_{n-1}=\sigma_1\cup\ldots\cup\sigma_n$ with $\dim(\sigma_i\cap (\{1\}\times \Delta_{n-1}))=i-1$
 so that by induction on $i\in\{1,\ldots, n\},$ the intersection of $\{1\}\times \Delta_{n-1}$ with  $\sigma_i$ is a face of dimension $i-1$ not contained in  $\sigma_1\cup\ldots\cup\sigma_{i-1}\cup( \{0\}\times\Delta_{n-1})$. 
 By Definition \ref{Defn_Morse}, the 1-handle $\stackrel{\circ}{\Delta}_1\times \Delta_{n-1}$ thus inherits the Morse tiling $\sqcup_{j=0}^{n-1}T^{n,j}_l$ 
made of one critical tile $C^n_1$ of index one and regular Morse tiles $T^{n,l}_1$ with $l\in \{1,\ldots, n-1\}$ and moreover for every $i\in \{1,\ldots, n\}$, $K_i^n \cap (\stackrel{\circ}{\Delta}_1\times \Delta_{n-1}) =
\sqcup_{j=0}^{i-1}(T^{n,j}_1)$.

Let us finally, prove the result for the $(n-1)$-handle $\Delta_1\times \stackrel{\circ}{\Delta}_{n-1}$ by induction on $n.$ For $n=2,$ it has already been proved in the first part. In general, as in the proof of Proposition \ref{Prop_Handle}, let $c$ be a vertex of $\{0\}\times \Delta_{n-1}$ so that $\Delta_1\times \stackrel{\circ}{\Delta}_{n-1}$ is the union of the cone $c\ast (\Delta_1\times \Delta_{n-2})$ over the lateral face deprived of its base and apex and the cone $c\ast (\{1\}\times \stackrel{\circ}{\Delta}_{n-1})$ over the upper face. The latter is isomorphic to a standard tile $T_n^{n}$ by Proposition \ref{Prop_4.1} while by the induction hypothesis, the former is the union of one critical tile $C^n_{n-1}=(c\ast C^{n-1}_{n-2})\setminus C^{n-1}_{n-2}$ and $n-2$ basic tiles $T^n_n=(c\ast T^{n-1}_{n-1})\setminus T^{n-1}_{n-1}$, by Proposition \ref{Prop_ConeMorse} and Proposition \ref{Prop_4.1}. The same induction provides the result since for every $i\in \{1,\ldots, n-1\}$,  $K_i^n = c \ast K_i^{n-1} $.  \end{proof}

\subsection{Simple triangulations on an annulus}\label{SSect_Simple}
In dimension three, the tiled two-handle $\Delta_1\times\stackrel{\circ}{\Delta}_2$ given by Corollary \ref{Cor_Handle} is obtained from the triangulated three-ball $\Delta_1\times \Delta_2$ given by Proposition \ref{Prop_Handle} by removing the cylinder $\Delta_1\times \partial \Delta_2.$ The latter inherits a triangulation with six triangles, see Figure \ref{Fig_Handle}. Each of these triangle has an edge on the boundary component $\{0\}\times \partial \Delta_2$ or $\{1\}\times \partial \Delta_2$ and  the opposite vertex on the opposite component.  We label by $d$ (res. $u$) the  triangles having an edge on the boundary component $\{0\}\times \partial \Delta_2$ (resp. $\{1\}\times \partial \Delta_2$) and choose the standard orientation on $\partial \Delta_2$, so that these six triangles produce the cyclic word $w = ududdu$ while this cyclic word encodes in a unique way the triangulation up to homeomorphisms preserving the boundary components and the orientation, see Figure \ref{Fig_UpDown}.

\begin{figure}[h]
   \begin{center}
       \includegraphics[scale=0.4]{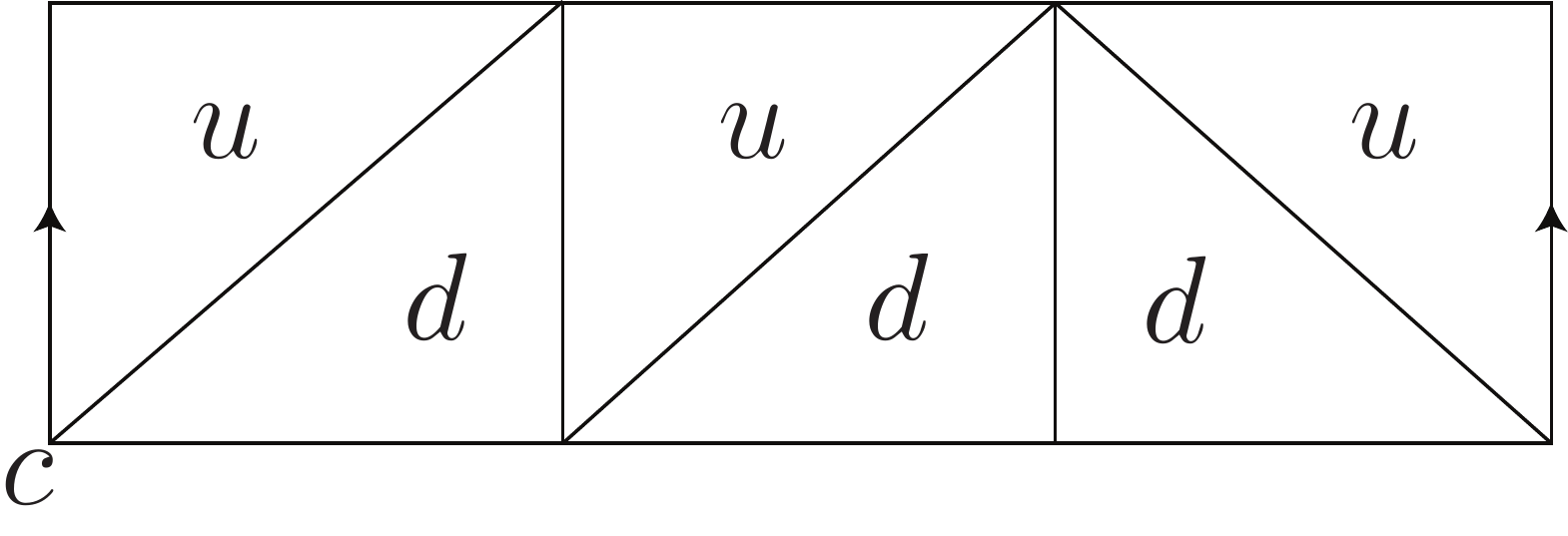}
   \caption{A simple triangulation on $\Delta_1\times \partial \Delta_2$ encoded by $ududdu$.}
     \label{Fig_UpDown}
        \end{center}
 \end{figure}

\begin{definition} \label{Defn_Simple}
A triangulation of an annulus $A \cong [0,1] \times \R/\Z$ is simple iff each triangle has one edge on one boundary component of $A$ and the opposite vertex on the other boundary component. 
\end{definition}

\begin{proposition}\label{Propwords}
Simple triangulations of an annulus up to homeomorphisms preserving the boundary components and the orientation are in one-to-one correspondence with finite cyclic words in the alphabet $\{ d,u \}$ containing each letter at least three times. 
\end{proposition}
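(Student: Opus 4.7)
The plan is to set up an explicit bijection. Given a simple triangulation $\tau$ of the annulus $A$, choose an orientation-preserving homeomorphism $A \cong [0,1] \times \R/\Z$ and travel once along, say, the boundary component $\{0\} \times \R/\Z$; the triangles of $\tau$ are encountered in a well-defined cyclic order, each labelled $d$ if its boundary edge lies in $\{0\} \times \R/\Z$ and $u$ otherwise. Since two such homeomorphisms differ by a homeomorphism preserving boundary components and orientation, they act on the resulting word by cyclic permutation only. Thus $\tau \mapsto w(\tau)$ is well-defined on equivalence classes.

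The key numerical step is this: if $w(\tau)$ has $k_d$ letters $d$ and $k_u$ letters $u$, then by Definition~\ref{Defn_Simple} each $d$-triangle contributes exactly one edge of $\{0\}\times \R/\Z$ and no edge of $\{1\}\times \R/\Z$, and vice versa for $u$-triangles, so the two boundary circles get subdivided into $k_d$ and $k_u$ edges respectively. For the boundary to be a bona fide simplicial circle (no multi-edges or loops), at least three edges are required, which forces $k_d, k_u \geq 3$.

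For the inverse map, given a cyclic word $w$ of length $N = k_d + k_u$ with $k_d, k_u \geq 3$, I would cut the annulus $[0,1]\times \R/\Z$ into $N$ congruent curvilinear rectangular slabs by radial arcs, then fill the $i$-th slab with a single triangle according to the $i$-th letter: a $d$-slab has its base on $\{0\}\times \R/\Z$ and its apex at the midpoint of the opposite side of the slab on $\{1\}\times \R/\Z$, and dually for a $u$-slab. Consecutive slabs then share a common diagonal edge between their apex and an endpoint of the next base, so the resulting cell structure is automatically a triangulation of $A$ that is simple by construction. The condition $k_d, k_u \geq 3$ is used precisely to ensure that the combinatorial structure is a simplicial complex: each boundary circle contains at least three distinct edges, and no two distinct triangles have the same vertex set.

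The main obstacle I foresee is not the construction itself but the bookkeeping needed to verify the simplicial condition in the inverse direction, and to show that the two constructions are mutually inverse modulo cyclic permutation and orientation-preserving homeomorphism. For the latter, one notes that a simple triangulation is completely determined by its cyclic sequence of triangles together with the pattern by which each triangle is attached to its neighbours, and that this attachment pattern is already forced by the labels $d, u$. Once this rigidity is established, the inverse check amounts to verifying that re-reading the word of the triangulation produced from $w$ recovers $w$, which is immediate from the construction.
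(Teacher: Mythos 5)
Your overall strategy is the one the paper follows: label each triangle $d$ or $u$ according to the boundary component containing its edge, read off a cyclic word, deduce ``each letter at least three times'' from the fact that a triangulated circle has at least three edges, and invert by realizing each word geometrically. However, two of your steps do not work as written. For the forward map, walking along $\{0\}\times \R/\Z$ meets a $d$-triangle along an edge but a $u$-triangle only at its apex vertex, and several $u$-triangles may share that apex; the walk alone therefore does not order the $u$-triangles in the fan at a common vertex, so ``encountered in a well-defined cyclic order'' is exactly the point that needs an argument. The paper's device fixes this cleanly: each triangle has precisely two interior edges, so joining the midpoints of the interior edges produces a closed transverse curve (equivalently, the dual graph is a single cycle) along which the cyclic word is read unambiguously. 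This gap is minor and repairable.

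The inverse construction is the more serious problem. A single triangle cannot fill a rectangular slab, and the adjacency you assert fails: in a simple triangulation, two consecutive triangles carrying the same letter must share their apex vertex (their common interior edge joins that common apex to the shared endpoint of their two boundary edges), whereas in your model each slab has its own apex at the midpoint of its own side, so consecutive triangles share at most a vertex and their union does not cover $A$. The correct reverse procedure is to place $k_d$ vertices on one circle and $k_u$ on the other and to draw the interior diagonals dictated by the word, reusing one apex for each maximal run of equal letters; this is what the paper means by ``reversing the procedure.'' Once you set it up this way you will also notice a subtlety that your appeal to $k_d,k_u\ge 3$ does not address (and which the paper's one-sentence converse leaves implicit): if all occurrences of one letter are consecutive, as in $uuuddd$, the corresponding run cones a single vertex of one boundary circle over the entire opposite circle, forcing a doubled edge, so such a word is not realized by an honest simplicial complex; the condition $k_d,k_u\ge 3$ excludes too few boundary edges but not this degeneration. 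So your rigidity remark for injectivity is fine in spirit, but the surjectivity construction must be redone and its simpliciality checked run by run.
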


\begin{proof}
Let us encode one boundary component of the annulus by the letter $d$ and the other one by the letter $u$. A triangulation of the annulus is a homeomorphism with a two-dimensional simplicial complex and if
this triangulation is simple each triangle of this complex has one edge mapped to some boundary component and thus encoded by either $d$ or $u$ and the opposite vertex on the other component. 
We may join the middle points of the two remaining edges by some arc in the triangle. The union of all these arcs then gives a closed curve homotopic to the boundary components and choosing an orientation
on this curve, we read on it  a finite cyclic word in the alphabet $\{ d,u \}$. Each boundary component has to contain at least three edges so that this cyclic word has to contain each letter at least three times. Conversely, one may reverse the procedure to associate to every such cyclic word a simple triangulation on the annulus, which is uniquely defined
up to homeomorphisms preserving the boundary components and the orientation.
\end{proof}

\begin{lemma}\label{Lem_Words} There exists four cyclic words in the alphabet $\{d,u\}$ which contain each letter three times. Namely,  $w=ududdu, \bar{w}=duduud, uuuddd$ and $ududud.$  

\end{lemma}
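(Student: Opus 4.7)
The statement is a finite combinatorial enumeration claim: up to cyclic rotation there should be exactly four words of length six in the alphabet $\{d,u\}$ containing three copies of each letter, and these four should be $uuuddd$, $ududud$, $w = ududdu$ and $\bar w = duduud$. My plan is to stratify the cyclic words by their cyclic block structure and count the classes in each stratum.

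The key observation is that in a cyclic word, $u$-blocks and $d$-blocks alternate, so the number of $u$-blocks equals the number of $d$-blocks; call this common value $k$. Each block has size at least one, and the $u$-blocks partition the three $u$'s (similarly for the $d$'s), so $k \in \{1,2,3\}$. The extremal cases $k=1$ and $k=3$ are immediate: they yield respectively the unique cyclic words $uuuddd$ and $ududud$.

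For $k=2$ I would write the cyclic word as $u^{a_1} d^{b_1} u^{a_2} d^{b_2}$ with $a_1 + a_2 = b_1 + b_2 = 3$ and all $a_i, b_i \ge 1$, so $(a_1, a_2)$ and $(b_1, b_2)$ each lie in $\{(1,2),(2,1)\}$, giving four ordered quadruples. The cyclic shift by $a_1+b_1$ letters, which moves the base point across one complete $u$-block and one complete $d$-block, acts on these quadruples as $(a_1,b_1,a_2,b_2) \mapsto (a_2,b_2,a_1,b_1)$, and this is the only non-trivial block-level symmetry. Consequently the four quadruples fall into exactly two orbits, $\{(1,1,2,2),(2,2,1,1)\}$ and $\{(1,2,2,1),(2,1,1,2)\}$, with representatives $uduudd$ and $udduud$ that one checks to be cyclic rotations of $\bar w$ and $w$ respectively.

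The only step where care is needed, and the main obstacle, is to confirm that these two $k=2$ orbits are genuinely distinct, i.e., that $w$ and $\bar w$ are not cyclically equivalent. I would verify this either by listing all six cyclic rotations of $ududdu$ and checking that $duduud$ does not appear among them, or structurally by observing that in the orbit of $w$ each $u$-block is cyclically adjacent to a $d$-block of complementary size ($1$ next to $2$ and $2$ next to $1$), while in the orbit of $\bar w$ each $u$-block is adjacent to a $d$-block of the same size; since no cyclic rotation mixes the letters $u$ and $d$, this pattern is an invariant of the orbit and the two classes are distinct. Summing over the three cases yields $1+2+1=4$ cyclic words, as stated.
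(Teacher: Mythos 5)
Your proof is correct and follows essentially the same route as the paper: your stratification by the number $k=1,2,3$ of $u$-blocks is exactly the paper's trichotomy (contains $uuu$; contains $uu$ but not $uuu$; contains no $uu$), and both arguments yield $1+2+1=4$ classes, with your orbit count on block-size quadruples and your explicit check that $\bar w$ is not a rotation of $w$ merely making precise what the paper's middle case leaves implicit. The only point to tighten is your ``structural'' invariant for separating the two $k=2$ orbits: with just two blocks of each letter every $u$-block is unorderedly adjacent to both $d$-blocks, so the invariant must be stated with orientation (each $u$-block is \emph{immediately followed by} a $d$-block of complementary, respectively equal, size); your alternative verification by listing the six rotations of $ududdu$ settles the matter in any case.
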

\begin{proof}
There is indeed a unique word containing the sequence $uuu$, there are two words containing the sequence $uu$ but not $uuu$, for they have to contain the sequence $duud$ and there is a single word which does not contain the sequence $uu$. These four words are thus $uuuddd, w=duudud, \bar{w}=duuddu$ and $ududud.$  
\end{proof}
Let us now declare that a \emph{compression} of two letters of a cyclic word in the alphabet $\{ d,u \}$ is the replacement of a sequence $dd$ (resp. $uu$) by the single letter $d$ (resp. $u$), while a \emph{suppression}
is the replacement of $udu$ (resp. $dud$) by $ud$ (resp. $du$). The following lemmas will be useful in the proof of Theorem \ref{Thm_intro2}.

\begin{lemma}\label{Lem_w2}
It is possible to reduce any finite cyclic word in the alphabet $\{ d,u \}$ containing each letter at least three times to a word  of six letters given by Lemma \ref{Lem_Words} by applying finitely many compressions or suppressions.
\end{lemma}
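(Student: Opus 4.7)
My plan is to proceed by induction on the length $n = A + B$ of the cyclic word, where $A$ denotes the number of occurrences of $d$ and $B$ the number of occurrences of $u$. Every compression and every suppression decreases $n$ by exactly one, and the four target six-letter words listed in Lemma~\ref{Lem_Words} all satisfy $A = B = 3$, so the heart of the argument is the following inductive step: whenever $n > 6$ and $A, B \geq 3$, I will exhibit a compression or suppression which preserves both $A \geq 3$ and $B \geq 3$ and reduces $n$ by one. Iterating then drives the length down to $6$, at which point the resulting word is automatically one of the four enumerated in Lemma~\ref{Lem_Words}.

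To set up the inductive step, I would encode the cyclic word by its run decomposition $d^{a_1} u^{b_1} \cdots d^{a_k} u^{b_k}$ with each $a_i, b_i \geq 1$. A direct reading of the definitions shows that a compression $dd \to d$ (resp.\ $uu \to u$) is available iff some $a_i \geq 2$ (resp.\ some $b_i \geq 2$), while a suppression $dud \to du$ (resp.\ $udu \to ud$) is available iff some $b_i = 1$ (resp.\ some $a_i = 1$); in each case the operation removes exactly one $d$ or one $u$. The task therefore reduces to choosing, in each configuration, an operation that targets a letter whose total count is strictly greater than three.

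The case analysis goes as follows. Since $n > 6$ and $A, B \geq 3$, at least one of the two counts exceeds $3$; by the symmetry between $d$ and $u$ I may assume $A \geq 4$. If some $d$-run has length $\geq 2$, compress it; the new $d$-count is $A - 1 \geq 3$. Otherwise every $a_i$ equals $1$, so $k = A \geq 4$; if in addition some $b_i = 1$, apply $dud \to du$ at that position, again removing a $d$ while keeping $A - 1 \geq 3$ and leaving $B$ unchanged. In the remaining case every $a_i = 1$ and every $b_i \geq 2$, which forces $B \geq 2k = 2A \geq 8$, and compressing any $u$-run is then safe because $B - 1 \geq 7 \geq 3$. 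This exhausts the possibilities and completes the inductive step.

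The main obstacle is precisely this bookkeeping: since every move strictly decreases the length, once either $A$ or $B$ drops to $2$ it is impossible to recover the target region $A = B = 3$. The case analysis above is engineered exactly to avoid this, and the particular choice of move in each branch is dictated by which letter can safely lose one occurrence. Apart from this constraint the argument is a direct combinatorial triage on the run structure of the cyclic word, and the base case $n = 6$ is furnished by Lemma~\ref{Lem_Words} itself.
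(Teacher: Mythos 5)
Your proof is correct and follows essentially the same route as the paper: induction on the length of the cyclic word, exhibiting at each step a compression or suppression that shortens it by one until Lemma~\ref{Lem_Words} applies. You are in fact somewhat more careful than the paper's own brief argument, since your run-length case analysis explicitly guarantees that each letter keeps appearing at least three times after every move, a safeguard the paper leaves implicit.
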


\begin{proof}
If the word contains only six letters, there is nothing to prove. Otherwise it contains one letter at least four times, say the letter $u$. Then, either it contains the sequence $uu$ or $dd$ and a compression decreases the number of letters in this word, or it contains the sequence $udu$ and a suppression decreases the number of letters by one. The result follows thus by induction. 
\end{proof}

Let us also declare that a \emph{subdivision} of such a word is the replacement of the letter $u$ by the sequence $duud$ and of the letter $d$ by $dd$.

\begin{lemma}\label{Lem_Word}
It is possible to reduce any finite cyclic word in the alphabet $\{d,u\}$  containing each letter at least three times to $w$ by applying finitely many compressions or suppressions and a single subdivision. 
\end{lemma}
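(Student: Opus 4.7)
The plan is to combine Lemma \ref{Lem_w2} with an explicit case analysis. First, by Lemma \ref{Lem_w2}, using only compressions and suppressions, I can reduce any admissible cyclic word to one of the four six-letter cyclic words $w = ududdu$, $\bar{w} = duduud$, $uuuddd$, or $ududud$. If the outcome is $w$, the proof is complete without using the subdivision. Otherwise, I apply the single allowed subdivision to the resulting six-letter word and then finish with further compressions and one suppression.

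Next, I would analyse the subdivided word. The substitution $u \mapsto duud$, $d \mapsto dd$ turns any six-letter cyclic word with three $u$s and three $d$s into an $18$-letter cyclic word which naturally decomposes into six alternating runs: the three $u$-runs all have length $2$, while the three $d$-runs have even lengths $a_i = 2 k_i + 2 \geq 2$, where $k_i$ is the number of original $d$s lying between the $i$-th and $(i+1)$-th original $u$. The target $w$, on the other hand, has the four-run cyclic pattern $u^2 d^1 u^1 d^2$.

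The key observation is that a single suppression $udu \to ud$ applied at a subword $u\,d\,u$ whose central $d$ is a whole $d$-run of length one and whose trailing $u$ is a whole $u$-run of length one deletes the trailing $u$-run and merges its two neighboring $d$-runs into one; this reduces the number of cyclic runs by exactly two. For each $W \in \{\bar{w},\, uuuddd,\, ududud\}$ I would therefore proceed as follows: (i) compress one $d$-run of the subdivided word down to length $1$ using $dd \to d$; (ii) compress the immediately following $u$-run down to length $1$ using $uu \to u$; (iii) apply $udu \to ud$ to pass from six runs to four; and (iv) apply further $dd \to d$ and $uu \to u$ compressions on the remaining four runs until their lengths form the cyclic pattern $(2,1,1,2)$ characteristic of $w$.

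The main obstacle is purely bookkeeping: in step (iv) one must pick the correct $u$-run to shorten to length $1$, since the alternative choice would yield the cyclic pattern $(1,1,2,2)$, which represents $\bar{w}$ rather than $w$. Once this choice is fixed, every required compression is available because the $d$-runs produced after the suppression have length at least $2$ in each of the three cases, so the procedure succeeds and the lemma follows.
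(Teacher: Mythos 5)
Your proposal is correct and takes essentially the same route as the paper: reduce to one of the four six-letter cyclic words via Lemma \ref{Lem_w2} and Lemma \ref{Lem_Words}, perform the single subdivision, and finish with compressions and suppressions; your uniform run-length analysis (three $u$-runs of length $2$, three $d$-runs of length $\geq 2$, one suppression to drop from six runs to four, then compress) just makes explicit the case check that the paper carries out on the example $uuuddd$ and declares ``similar'' for the remaining words. One cosmetic slip: $(2,1,1,2)$ and $(1,1,2,2)$ coincide as cyclic length sequences, the genuine distinction between $w$ and $\bar{w}$ being whether the length-one $d$-run follows the length-two or the length-one $u$-run; since after the suppression both $d$-runs have length at least $2$, your procedure can and does realize the correct choice, so the argument stands.
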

\begin{proof}
By Lemma \ref{Lem_w2}, we may assume that the word has six letter and by Lemma \ref{Lem_Words} 
that it is one of the four words $ududud, uuuddd$, $w$  and  $\bar{w}$. Now, after a single subdivision, these words can be reduced to $w$ using compressions and suppressions. For example, $uuuddd$ becomes $duudduudduuddddddd$. By selecting the first sequence $duu$ and compressing the other ones, it reduces to $duududud$. After two suppressions $dud\to du$ and $udu\to ud$, we get $w$. The proof is similar for the other words.\end{proof}

Let us finally observe that if we perform a barycentric subdivision on an annulus equipped  with a simple triangulation, then this annulus becomes the union of two triangulated annuli separated by the triangulated circle joining the barycenters of the triangles and of the inner edges, see Figure \ref{Fig_Subdiv}.

  \begin{figure}[h]
   \begin{center}
       \includegraphics[scale=0.4]{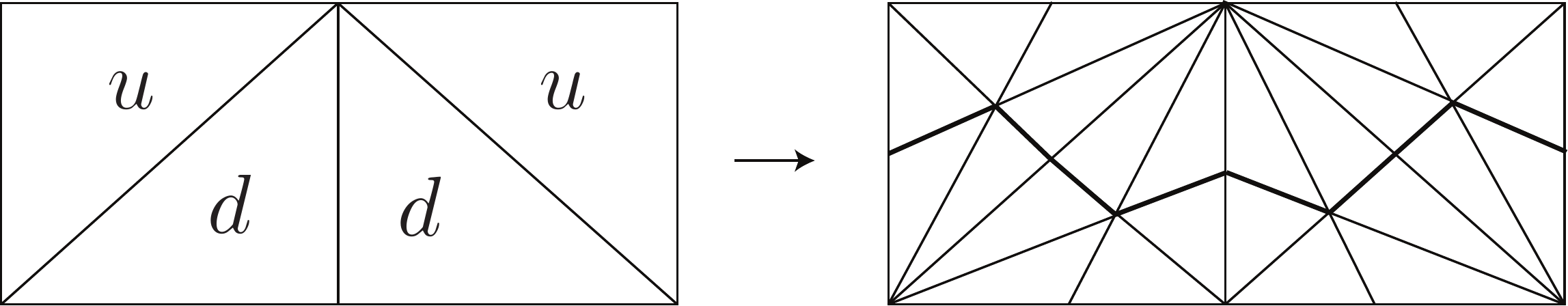}
   \caption{Barycentric subdivision on a simple triangulation.}
     \label{Fig_Subdiv}
        \end{center}
 \end{figure}
  
  One of these annuli contains the subdivided boundary component  labelled by $u$ while  the other  one contains the subdivided boundary component  labelled by $d$. Now, if the original simple triangulation  is encoded by some cyclic word $w'$ in the alphabet $\{d,u\}$, the word encoding the former is deduced  from $w'$ by performing a single subdivision and the latter by  performing the similar substitutions $d\to uddu$ and $u\to uu$. 
  
\subsection{Proofs of Theorem \ref{Thm_intro2} and Corollary \ref{Cor_intro}}\label{SSect_intro2}

We need the following lemma.

\begin{lemma}\label{Lem_KL}
Let $(K,L)$ be a simplicial complex homeomorphic to a compact $n$-dimensional manifold with boundary $(M,\partial M).$ Then, any simplex of $\Sd(K)$ intersects $\Sd(L)$ along a single face, possibly empty.  Moreover, $\Sd(L)$ contains two disjoint $(n-1)$-simplices in each connected component such that  any simplex 
of $\Sd(K)$ intersects  their union along a single face, possibly empty. \end{lemma}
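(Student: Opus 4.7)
The plan for the first assertion is to use the standard combinatorial description of $\Sd(K)$: each of its simplices has the form $[\hat{\mu}_0, \hat{\mu}_1, \ldots, \hat{\mu}_k]$ for some chain $\mu_0 < \mu_1 < \ldots < \mu_k$ in $K$, where $\hat{\mu}$ denotes the barycenter of $\mu$, and $\Sd(L)$ is the full subcomplex of $\Sd(K)$ spanned by the barycenters of the simplices of $L$. Since $L$ is a subcomplex of $K$, it is closed under taking faces: if $\mu_i \in L$ and $j < i$, then $\mu_j$ is a face of $\mu_i$ and therefore also lies in $L$. So the set $\{ i : \mu_i \in L \}$ is an initial segment $\{0, 1, \ldots, m\}$, possibly empty, and a direct check then gives $\sigma \cap \Sd(L) = [\hat{\mu}_0, \ldots, \hat{\mu}_m]$, a single face of $\sigma$.

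For the second assertion, in each connected component of $L$ I would select two distinct top $(n-1)$-simplices $t_1, t_2$ of $L$; they exist because each top facet of a closed triangulated $(n-1)$-manifold is shared with exactly one other top simplex, which forbids a single top simplex per component. Since $t_1 \neq t_2$ both have dimension $n-1$, each $t_i$ contains at least one vertex $v_i$ not in $t_{3-i}$. For $i \in \{1,2\}$ I would then build a maximal flag $v_i = \mu^{(i)}_0 < \mu^{(i)}_1 < \ldots < \mu^{(i)}_{n-1} = t_i$ in $L$ starting from $v_i$, so that every $\mu^{(i)}_j$ contains the vertex $v_i$, and set $\sigma_i := [\hat{\mu}^{(i)}_0, \hat{\mu}^{(i)}_1, \ldots, \hat{\mu}^{(i)}_{n-1}]$, a top $(n-1)$-simplex of $\Sd(L)$ in the chosen component.

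The main point, and the principal obstacle, is then to check that no simplex of $\Sd(K)$ meets both $\sigma_1$ and $\sigma_2$. The key observation is that each $\mu^{(i)}_j$ contains the vertex $v_i \notin t_{3-i}$, so $\mu^{(i)}_j$ is not contained in $t_{3-i}$ and is thus neither equal to, nor a face of, any simplex in the flag for $\sigma_{3-i}$. This already yields $\sigma_1 \cap \sigma_2 = \emptyset$. Now if some simplex $\sigma$ of $\Sd(K)$, corresponding to a chain $\mu_0 < \ldots < \mu_k$, were to meet both $\sigma_1$ and $\sigma_2$, then by the first assertion the initial segment of the chain giving $\sigma \cap \Sd(L)$ would have to contain both some $\mu^{(1)}_j$ and some $\mu^{(2)}_{j'}$, and being two distinct elements of the same chain they would be comparable in $K$, contradicting the observation. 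It follows that $\sigma \cap (\sigma_1 \cup \sigma_2)$ equals $\sigma \cap \sigma_i$ for some $i \in \{1,2\}$ (or is empty), which is a face of $\sigma$ since $\sigma$ and $\sigma_i$ both lie in the simplicial complex $\Sd(K)$.
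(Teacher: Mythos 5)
Your proof is correct and follows essentially the same route as the paper: the chain/initial-segment description of simplices of $\Sd(K)$ for the first assertion, and for the second the choice of two $(n-1)$-simplices each containing a vertex not in the other, with maximal flags based at those vertices. The only difference is that you spell out the final incomparability argument (why no simplex of $\Sd(K)$ can meet both flags), which the paper leaves implicit with ``from what precedes''.
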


\begin{proof}
By definition, for every simplex $\tau$ of the first barycentric subdivision $\Sd(K)$ of $K$, there exists a collection of simplices  $\sigma_0,\ldots, \sigma_n$  of $K$ such that for every $0\leq i<j\leq n$, $\sigma_i$ is a face of $\sigma_j$ and  such that  the vertices of $\tau$ are the barycenters $\hat{\sigma}_i$  of $\sigma_i$, $i\in\{0,\ldots,n\}$, see~\cite{M}.
If $\tau\cap \Sd(L)\neq \emptyset$, let $j\in \{0,\ldots, n\}$ be the greatest element such that $\hat{\sigma}_j\in\Sd(L)$. Then, $\sigma_j$ is in $L$, so that its faces $\sigma_0,\ldots,\sigma_{j-1}$ are in $L$ as well and  $\hat{\sigma}_0,\ldots, \hat{\sigma}_{j-1}$ are in $\Sd(L)$. The intersection of the simplex $[ \hat{\sigma}_0,\ldots,  \hat{\sigma}_n]$ with $\Sd(L)$ is then the face  $[ \hat{\sigma}_0,\ldots,  \hat{\sigma}_j]$.  

Now, $L$ contains  at least two $(n-1)$-simplices $\sigma, \theta$ in each connected component and $\sigma$ (resp. $\theta$) contains a vertex $\sigma_0$ (resp. $\theta_0$) which is not in $\theta$ (resp. not in $\sigma$). Choosing flags $\sigma_0<\sigma_1<\ldots<\sigma_{n-1}=\sigma$ and $\theta_0<\theta_1<\ldots<\theta_{n-1}=\theta$, we get two disjoint $(n-1)$-simplices $[\hat{\sigma}_0, \ldots,\hat{\sigma}_{n-1}]$ and $[\hat{\theta}_0, \ldots,\hat{\theta}_{n-1}]$  in each connected component of $\Sd(L).$ These simplices have the required property from  what precedes. Hence the result. 
\end{proof}

\begin{proof}[Proof of Theorem \ref{Thm_intro2}] 
Let a smooth Morse function be given on the closed $n$-dimensional manifold $X$, $n\leq 3$. We know from Morse theory that it induces a handle decomposition of $X$, that is, it decomposes $X$  into finitely many sublevels, starting from the empty set and ending with $X$ in such a way that one passes from a sublevel to the next one by attaching some handle, see Theorem~3.2 of \cite{Milnor2} or also \cite{CG, RS}. We are going to prove the result by having each sublevels being triangulated and equipped with a Morse  shelling  and by performing each handle attachment by gluing a Morse  tiled handle. Moreover, we will check that  the  shelling can be extended through the handle to get the result by finite induction.

There is no obstruction to attach a zero-handle whatever $n$ is, it consists in adding a disjoint closed $n$-simplex to the triangulation. Lemma \ref{Lem_KL} makes  it possible to attach a one-handle whatever $n$ is as well, after performing one barycentric subdivision. Indeed, let $X_0$ be an $n$-dimensional simplicial complex homeomorphic to a  manifold with boundary and  let it be equipped with a Morse shelling. By Corollary \ref{Cor_BarSubM}, $\Sd(X_0)$ inherits a Morse shelling with the same number of critical tiles and same indices as  $X_0$, while $|\Sd(X_0)|$ and $|X_0|$ are homeomorphic to each other. By Lemma \ref{Lem_KL}, we may find two disjoint $(n-1)$-simplices in the boundary of $\Sd(X_0)$, each simplex being chosen in any of the connected component of this boundary, such that every simplex of $\Sd(X_0)$ intersects their union along a single face, possibly empty. There is then no obstruction to attach the one-handle $\stackrel{\circ}{\Delta}_1\times \Delta_{n-1}$ along these two $(n-1)$-simplices and to equip this handle with the triangulation and Morse shelling  given by Corollary \ref{Cor_Handle}. Moreover, attaching the $n$-tiles given by  Corollary \ref{Cor_Handle} one after the other to $\Sd(X_0)$, following the shelling order, we extend  the Morse shelling of $\Sd(X_0)$ through the handle  to get a Morse shelled simplicial complex $\Sd(X_0)\sqcup (\Delta_1\times \Delta_{n-1})$
 homeomorphic  to the manifold with  boundary obtained after attaching  the one-handle to $X_0$.  This already proves Theorem \ref{Thm_intro2} in dimension $n=1$.
 
 Let us now prove that it is possible to attach a Morse shelled two-handle in dimension three. Let $X_1$ be a three-dimensional simplicial complex homeomorphic to a manifold with boundary and equipped with a Morse shelling. 
The two-handle has to be attached along a tubular neighborhood  of a two-sided closed curve $C'$ embedded in a boundary component $\Sigma$ of $X$. By Theorem~A1 of~\cite{Ep1} it can be assumed to be the image of a $PL$-embedding of $S^1$, deforming it by some ambient isotopy if necessary. We then perform a large number $d\gg 0$ of barycentric subdivisions on $X_1$ and isotope slightly $C'$ to get a new curve $C$ which does not contain any vertex of  $\Sd^d(\Sigma)$, is transverse to its edges and is such that for every triangle $T$ of $\Sd^d(\Sigma)$, either $C$ is disjoint from $T$, or $C$ intersects $T$ along a connected piecewise linear arc joining two different edges, see Figure \ref{Fig_Curve}.

\begin{figure}[h]
   \begin{center}
    \includegraphics[scale=0.4]{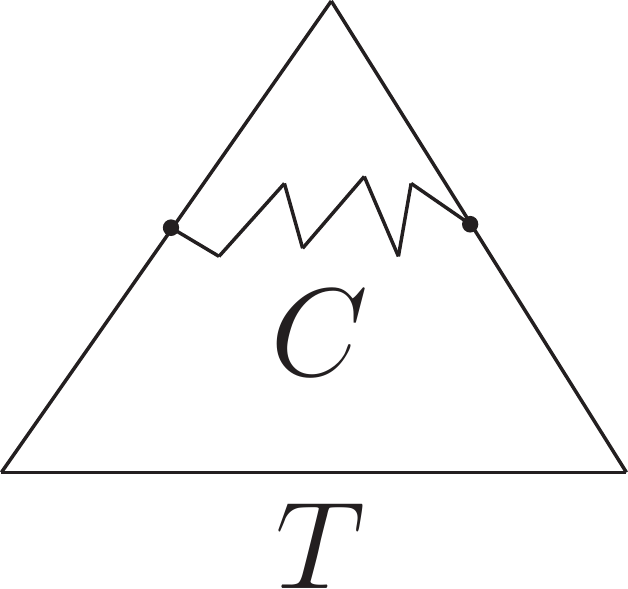}
    \caption{A piecewise-linear arc joining two edges.}
    \label{Fig_Curve}
      \end{center}
 \end{figure}

The union of all  triangles meeting $C$ is then a regular neighborhood $N$ homeomorphic to an annulus  equipped with a simple triangulation, see Definition \ref{Defn_Simple}. Let us choose a homeomorphism between $N$ and the annulus $\Delta_1\times \partial \Delta_2$ lying on the boundary of $\Delta_1\times \Delta_2$. It induces a labelling of the boundary component of $N$ by the letters $d$ and $u$ as well as an orientation  on $C$, for those have been fixed in subsection \ref{SSect_Simple}, while such labelling and orientation encode  the homeomorphism in a unique way up to isotopy. By Proposition \ref{Propwords}, the simple triangulation of $N$ gets then encoded by a cyclic word $w_N$ in the alphabet $\{d,u\}$ containing each letter at least three times.
Moreover, by Lemma \ref{Lem_Word},  this word can be reduced to $w=ududdu$ using finitely many suppressions or compressions together with a single subdivision. We are going to prove that these operations can be performed by attaching basic tiles of order two along $N$ or by performing one barycentric subdivision. To begin with, we may assume 
by Lemma \ref{Lem_KL} that any simplex of  $X_1$ intersects $N$ along a single face, possibly empty, replacing $X_1$ by $\Sd(X_1)$ if necessary. Then, if two consecutive triangles of $N$  are encoded by $dd$ (resp. $uu$), we may attach a basic tile of order two along them to get a new Morse shelled  simplicial complex together with annulus  $N'$ as its boundary which gets encoded by the word obtained  from $w_N$ after the compression $dd\to d$ (resp. $uu\to u$).  This new simplicial complex is homeomorphic to $X_1$  by Lemma~3.25 of \cite{RS}.  As long as the cyclic word contains at least four times the letter $d$ (resp. $u$), there is no obstruction to perform the attachement, since a simplex of $X_1$ can intersect the union of the two consecutive triangles only along a single face. Moreover, we keep the property that any simplex intersects $N'$ along a single face, possibly empty, since $N'$ contains only one facet of the basic tile of order two, see Figure \ref{Fig_DD}.

\begin{figure}[h]
   \begin{center}
    \includegraphics[scale=0.4]{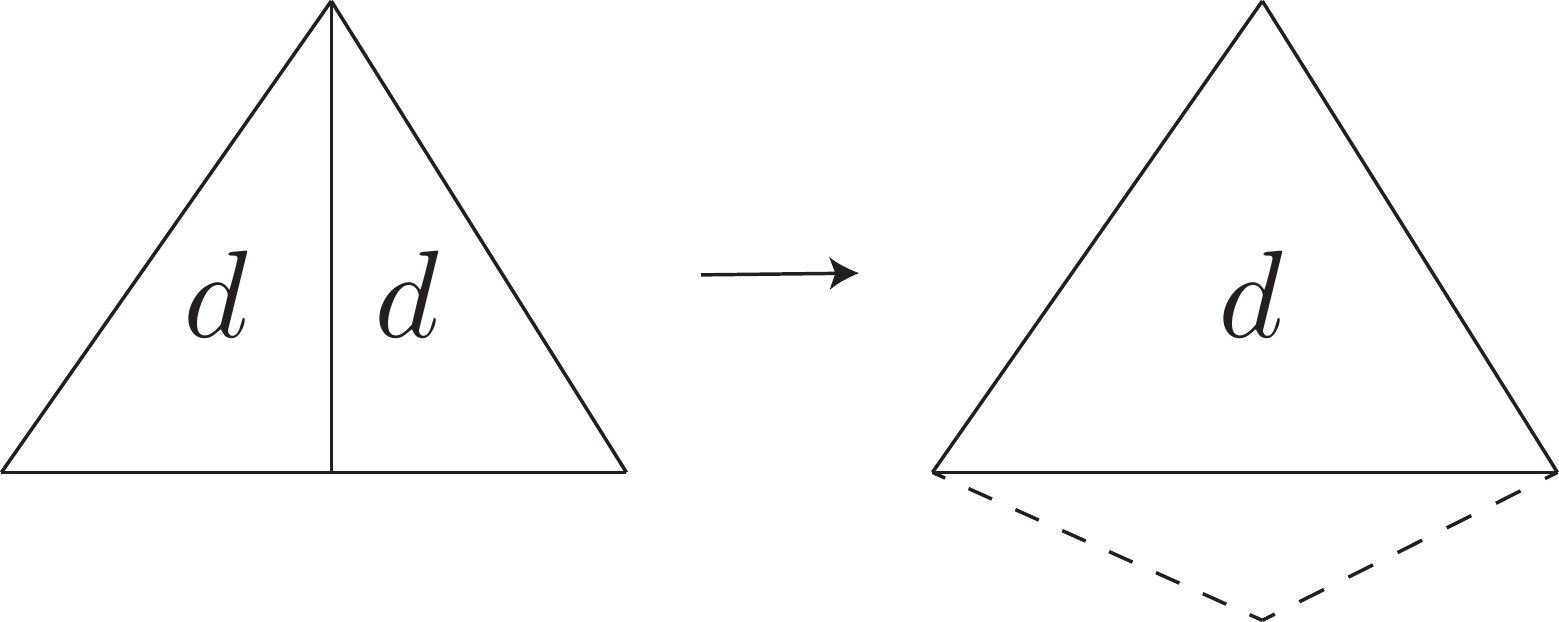}
    \caption{Compression $dd\to d$.}
    \label{Fig_DD}
      \end{center}
 \end{figure}

Likewise, if three consecutive triangles of $N$ are encoded by $udu$ (resp. $dud$), we may attach two basic tiles of order two, first along $du$ (resp. $ud$) to get $uud$ (resp. $ddu$) and then along $uu$ (resp. $dd$) to get $ud$ (resp. $du$), see Figure \ref{Fig_UD_DU}. Again,  there is no obstruction to perform these attachments as long as the cyclic word contains the letter  $u$ (resp. $d$) at least four times and we get a new Morse shelled simplicial complex and neighborhood $N'$ whose triangulation gets encoded by the word obtained from $w_N$ after the suppression $udu\to ud$ (resp. $dud\to du$).
The homeomorphism type of the pair $(|X_1|, N)$ has not been altered by this procedure.
Finally, as explained at the end of subsection \ref{SSect_Simple}, we may perform a barycentric subdivision on $X_1$ and keep one half of the triangulated annulus $\Sd(N)$, namely the one containing  the subdivided boundary component encoded by $u$,  to get a new annulus whose simple triangulation gets encoded by a word deduced from  $w_N$ after one subdivision, see Figure \ref{Fig_Subdiv}.

\begin{figure}[h]
   \begin{center}
    \includegraphics[scale=0.29]{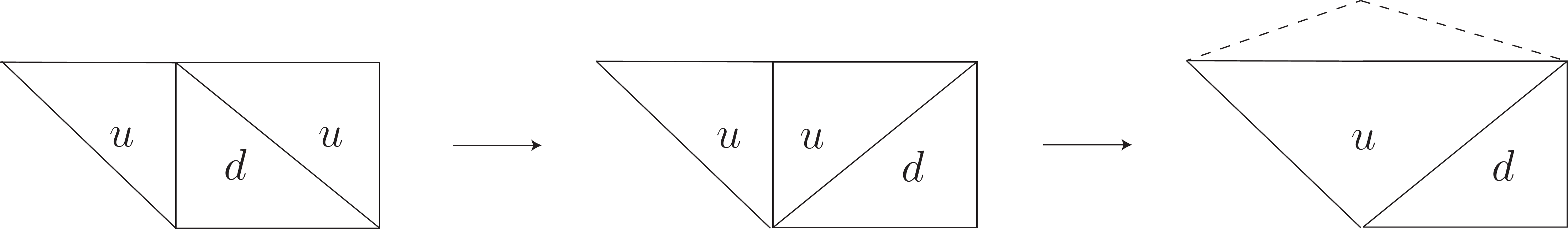}
    \caption{Suppression $udu\to ud$.}
    \label{Fig_UD_DU}
      \end{center}
 \end{figure}

By  Corollary \ref{Cor_BarSubM} and Lemma \ref{Lem_Word}, we may thus assume that the triangulation on the regular neighborhood $N$ is in fact encoded by  $w = ududdu$ and moreover  that any simplex  of $X_1$ intersects $N$ along a single face, possibly empty.
Corollary \ref{Cor_Handle} then provides a Morse shelled two-handle that can be attached to the boundary component $\Sigma$ along $N$. 
 Moreover, we may attach the three tiles of the handle one after the other to $X_1$, following the shelling order, to extend the Morse shelling of $X_1$ through the handle. 
 
 In order to prove Theorem \ref{Thm_intro2}, it remains now to check that it is also possible to attach a Morse shelled top-dimensional handle in dimensions two  and three. Let thus $X_2$ be a simplicial complex homeomorphic to an $n$-dimensional manifold with boundary and equipped with a Morse shelling and let $\Sigma$ be a boundary component of $X_2$ which we assume to be homeomorphic to a  $(n-1)$-sphere. By Corollary \ref{Cor_BarSubM} and Lemma \ref{Lem_Word} we may assume that any simplex of $X_2$ intersects $\Sigma$ along a single face, possibly empty. If $n=2$, $\Sigma$ is a triangulated circle. If the latter contains only three edges, we glue a basic tile of order one followed by a basic tile of order two and three to cap it, for the union $T_1^2\sqcup T_2^2\sqcup T^2_3$ is the shelled open disk $\partial \Delta_3\setminus \Delta_2.$ If $\Sigma$ contains more than three edges, we may find two consecutive ones which are not faces of a same triangle of $X_2$ and attach a basic tile of order two along them to decrease by one the number of edges in this boundary component. After a finite induction, we are led to the previous case and cap $\Sigma$ to get a Morse shelled simplicial complex homeomorphic to  a manifold with one less boundary component. Theorem \ref{Thm_intro2} is now proved in dimension $n=2$.
 
 If $n=3$, $\Sigma$ is a triangulated two-sphere. If this triangulation has just four vertices, we attach a basic tile of order one followed by basic tiles of order two, three and four to cap $\Sigma$, for the union $T_1^3\sqcup T_2^3\sqcup T^3_3\sqcup T^3_4$ is the shelled open disk $\partial \Delta_4\setminus \Delta_3.$ If this triangulation has more than four vertices, we are going to prove by induction that  it can be modified to reduce its number of vertices.    Indeed, if $\Sigma$ contains a vertex $v$ of valence three, there is no obstruction to attach a basic tile of order three along the three adjacent triangles  to get a new  simplicial complex  and triangulated sphere with the same vertices  as $\Sigma$ but $v$, see Figure \ref{Fig_VBary}, since the triangulation has more than four vertices by hypothesis.

  \begin{figure}[h]
   \begin{center}
    \includegraphics[scale=0.35]{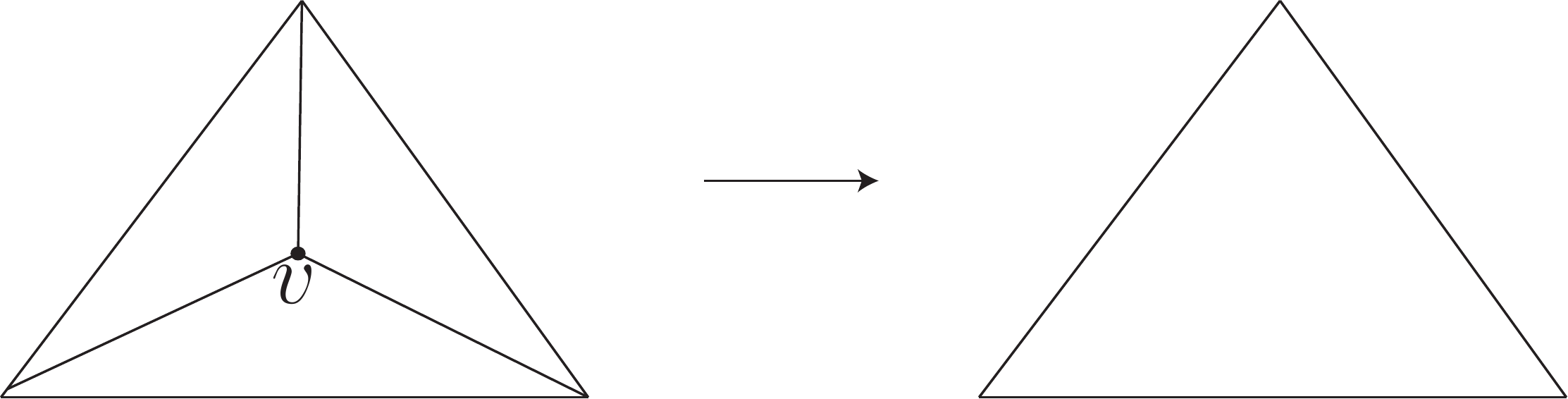}
    \caption{Removal of a vertex.}
    \label{Fig_VBary}
      \end{center}
 \end{figure}

    If $\Sigma$  contains a vertex $v$ of valence greater than three, then its link $\Lk_\Sigma(v)$ in $\Sigma$ is a triangulated circle containing more than three vertices. From Jordan's theorem we deduce that at least one of these vertices, say $w$, has the property that 
   if an edge in $\Sigma$ has its boundary in $\Lk_\Sigma(v)$ and contains $w$, then this edge is  included in     $\Lk_\Sigma(v)$. We then choose two consecutive triangles adjacent to $v$, one of them having $[v,w]$ in its boundary, and attach a basic tile of order two along them.   We get a new simplicial complex and a triangulated sphere with same vertices, but the valence of $v$ has decreased by one, see Figure \ref{Fig_VDiag}.

\begin{figure}[h]
   \begin{center}
    \includegraphics[scale=0.27]{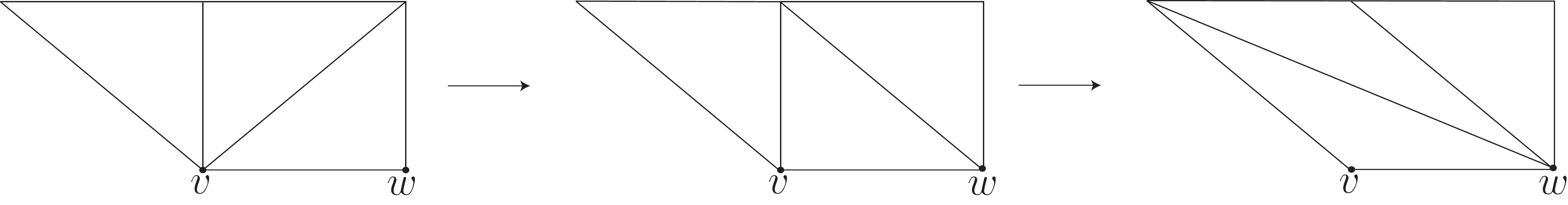}
    \caption{Decreasing the valence of a vertex.}
    \label{Fig_VDiag}
      \end{center}
       \end{figure}
       
     Moreover, the basic tile of order two has still one triangle adjacent to $v$ and $w$. We attach a basic tile of order two along this triangle and the one   next to it adjacent to $v$ not containing $w$ to decrease the valence of $v$ by one more, as in Figure \ref{Fig_VDiag}. We may continue by induction until the valence of $v$ has decreased to three and then attach as before a basic tile or order three to get a new triangulated sphere with same vertices as $\Sigma$ but $v$. Moreover, once this last attachement is performed, we keep the property that any simplex of the simplicial complex intersects its boundary along a single  face, possible empty. After a finite induction, we are led to the case where $\Sigma$ is a triangulated sphere with just four vertices and cap it  as before by a shelled open disk. We have thus attached a triangulated three-handle to $X_2$ along $\Sigma$ and extended the shelling of $X_2$ through this handle. Hence the result.
 \end{proof}
 
 The shellings given by the proof of Theorem \ref{Thm_intro2} do not use any regular Morse tile of vanishing order, as the ones given by the proof of Theorem \ref{Thm_intro1}.

\begin{proof}[Proof of Corollary \ref{Cor_intro}]
Let $f$ be a discrete Morse function compatible with $\mathcal{T}$.  By Theorem \ref{Thm_DiscreteMorse}, the critical points of $f$ are in one-to-one  correspondence, preserving the index, with the critical tiles of the tiling, so that for every $k \in \{ 0, \dots , n \}$, $f$ has $c_k (\mathcal{T}) $ critical points of index $k$. Theorem~$7.3$ of \cite{F1} then provides a chain complex which computes the homology of $X$ and which has dimension $c_k (\mathcal{T}) $ in grading $k$, it is the discrete Morse complex. The result then follows from the classical Morse inequalities
deduced from this chain complex. 
\end{proof}

\subsection{Final remarks}\label{SSect_Remarks}

\begin{enumerate}

\item It would be of interest to prove Theorem \ref{Thm_intro2} in any dimension. In  \cite{Welsch}, we deduce from Theorem \ref{Thm_intro2} that any finite product of closed manifolds of dimension less than four carries Morse shellable triangulations. By \cite{Welsch3}, every finite simplicial complex becomes Morse shellable after finitely many stellar subdivisions at maximal simplices. 

\item The $h$-tiling of $\partial \Delta_2$ made of three basic tiles of order one has no critical tile. Example $4.5$ of \cite{SW3} also provides h-tiled triangulations on the two-torus having no critical tile, while by Theorem $1.1$ of \cite{Welsch}, any product of a sphere and a torus of positive dimension carries $h$-tileable triangulations using only regular tiles. In these examples, every discrete vector field compatible with the tiling has closed non-stationary path, so that by Theorem \ref{Thm_9.3} they cannot be the gradient vector fields of some discrete Morse functions.

\item  By Lemma \ref{Lem_chitile} and the additivity of the Euler characteristic, an even dimensional closed manifold equipped with an $h$-tiled triangulation has non-negative Euler characteristic. 
In particular, no triangulation on a closed surface of genus greater than one is $h$-tileable. We do not know which closed three-manifold carry $h$-tileable triangulations.

\item The two-dimensional simplicial complex containing four triangles depicted in Figure \ref{fig_triangles}, where the points $a$ are glued together, is not Morse tileable. It would be of interest to exhibit a closed triangulated  manifold which is not Morse tileable.

 \begin{figure}[h]
   \begin{center}
    \includegraphics[scale=0.3]{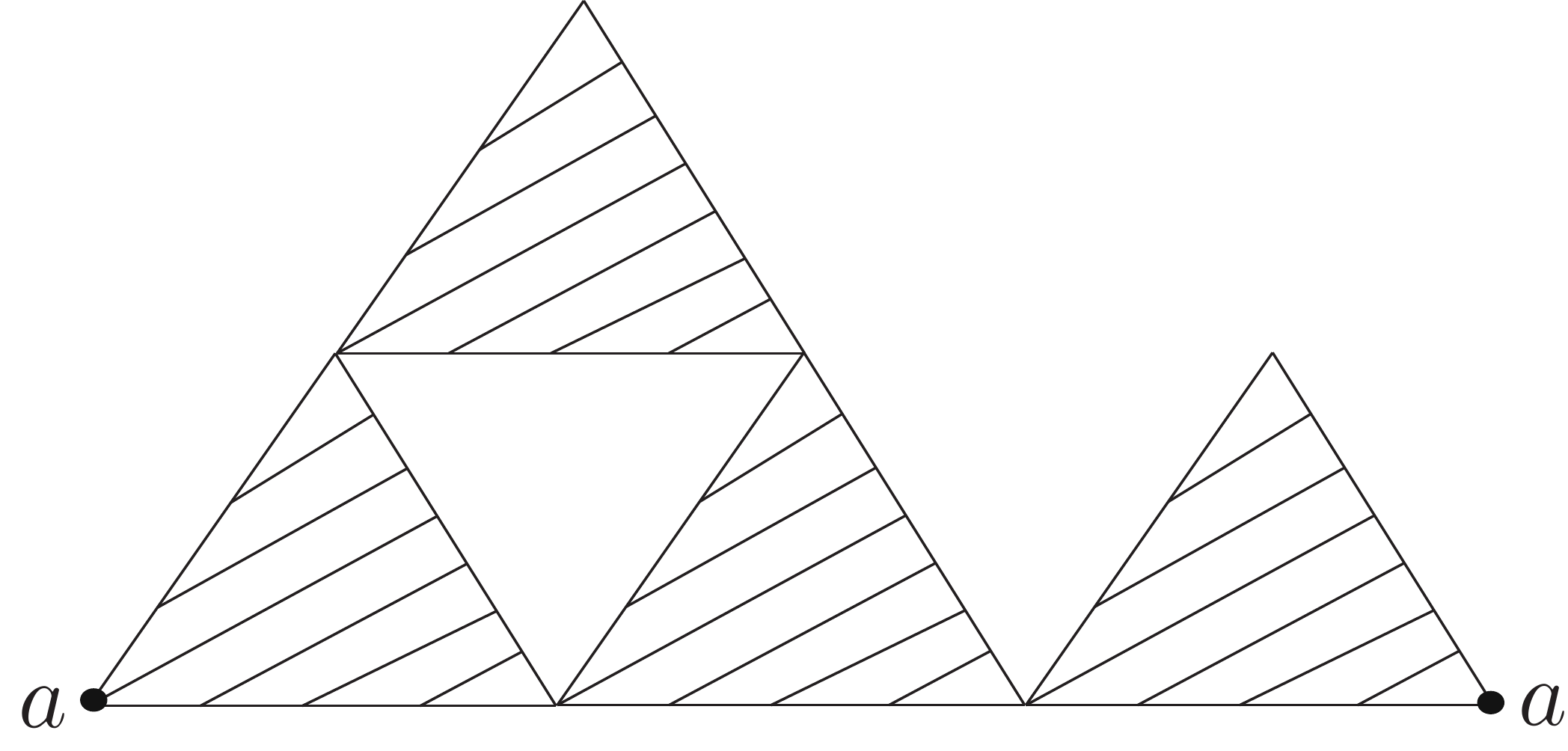}
    \caption{A simplicial complex which is not Morse tileable.}
    \label{fig_triangles}
      \end{center}
 \end{figure}
 
 \item Triangulations on any smooth closed manifold always exist, see for example \cite{W}, and topological closed manifolds of dimension less than four have a unique smooth structure, see \cite{Bing, Mo}.  Also, Morse functions exist on all closed manifolds, see~\cite{Milnor2} and handle decompositions also exist  on $PL$-manifolds, see Proposition~6.9 of~\cite{RS}.

\end{enumerate}

 \addcontentsline{toc}{part}{References}

Univ Lyon, Universit\'e Claude Bernard Lyon 1, CNRS UMR 5208, Institut Camille Jordan, 43 blvd. du 11 novembre 1918, F-69622 Villeurbanne cedex, France

{salepci@math.univ-lyon1.fr, welschinger@math.univ-lyon1.fr.}
\end{document}